\DeclareMathOperator{\Lip}{Lip}
\newcommand{\HoldLipSpaceFD}[2]{\Lip_{\Omega}(#1;#2)}
\newcommand{\HoldLipSpace}[3]{\Lip_{#1}(#2;#3)}
\newcommand{\HoldLipSpaceMultFD}[2]{\Lip_{\Omega}^{M}(#1;#2)}
\newcommand{\dualLat}[1]{#1^{\perp}}
\newcommand{\diffOpBack}{\overline{\triangle}}
\newcommand{\diffOpForw}{\triangle}
\title[Titchmarsh theorems on fundamental domains]{Titchmarsh theorems for H\"older-Lipschitz functions on fundamental domains of lattices in \(\bR^{d}\) with applications to boundedness of Fourier multipliers}
\author[A. Hendrickx]{Arne Hendrickx \orcidlink{0000-0002-4537-8627}}
\address{
 Arne Hendrickx
  \endgraf
  Department of Mathematics: Analysis, Logic and Discrete Mathematics
  \endgraf
  Ghent University, Belgium
  \endgraf
  {\it E-mail address:} {\rm arnhendr.Hendrickx@UGent.be}
  }
\subjclass[2020]{Primary 43A15, 43A75 {; Secondary 43A22}.}
\keywords{Titchmarsh theorems, H\"older-Lipschitz spaces, Duren's lemma, Fourier multipliers}
\thanks{This research has been supported by Fonds Wetenschappelijk Onderzoek - Vlaanderen (FWO) with the PhD Fellowship Fundamental Research grant 1187323N}
\begin{document}

\begin{abstract}
    We extend the classical Titchmarsh theorems to the Fourier transform of two types of H\"older-Lipschitz functions -- additive and multiplicative -- defined on fundamental domains of lattices in $\mathbb{R}^d$. Our approach is based on generalizations of Duren's lemma, which we first illustrate in the classical Euclidean setting. As an application of the second Titchmarsh theorem, we obtain boundedness results for Fourier multipliers between H\"older-Lipschitz spaces, from which we deduce Lipschitz-Sobolev regularity for Bessel potential operators on fundamental domains of lattices in the additive case. These results provide a natural generalization of classical one-dimensional theorems on the real line and on the torus to higher dimensions.
\end{abstract}

\maketitle

\tableofcontents

\section{Introduction}

In 1948 Titchmarsh proved two theorems about H\"older-Lipschitz spaces on \(\bR\) in \cite{Titchmarsh}, which now bear his name. The first theorem is an extension of the Hausdorff-Young inequality $\lVert \FT{f}\rVert_{L^{p'}} \leq \normIn{f}{L^p}$ for \(1 \leq p \leq 2\), where $\frac{1}{p} + \frac{1}{p'} = 1$. As a consequence of this inequality, we see that $\FT{f} \in L^{p'}$. Titchmarsh extended the range of exponents \(\beta\) for which \(\FT{f} \in L^{\beta}\) under some Lipschitz-type condition on \(f \in L^{p}\).

\begin{theorem}[{\cite[Theorem 84]{Titchmarsh}}]
	Suppose that \(f \in L^{p}(\bR)\), for some \(1 < p \leq 2\), satisfies the Lipschitz-type condition
	\[\int_{-\infty}^{\infty} \abs{f(x+h)-f(x-h)}^{p} \, \mathrm{d}x = O\left(h^{\alpha p}\right) \quad \text{as} \quad h \to 0.\]
	Then its Fourier transform \(\FT{f}\) belongs to \(L^{\beta}(\bR)\) for
	\[\frac{p}{p+\alpha p -1} < \beta \leq \frac{p}{p-1}.\]
\end{theorem}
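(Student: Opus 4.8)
The plan is to combine the Hausdorff--Young inequality with the translation property of the Fourier transform and a dyadic decomposition in frequency; the whole argument is driven by a single estimate which one naturally isolates as a stand-alone lemma --- essentially Duren's lemma, which the paper will presumably state next. Write $q := p' = \tfrac{p}{p-1}$, so that the endpoint $\beta = q$ is already covered by Hausdorff--Young: $\FT f \in L^{q}(\bR)$. It therefore remains to reach every $\beta$ with $\tfrac{p}{p+\alpha p-1} < \beta < q$.

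First I would apply Hausdorff--Young not to $f$ itself but to the symmetric difference $g_h(x) := f(x+h) - f(x-h)$. Its Fourier transform is $\FT{g_h}(\xi) = m_h(\xi)\,\FT f(\xi)$ with $\lvert m_h(\xi)\rvert = 2\lvert\sin(c h\xi)\rvert$, where $c>0$ is the normalisation constant of $\FT{}$. Hence the hypothesis $\normIn{g_h}{L^p} = O(h^{\alpha})$ together with $\normIn{\FT{g_h}}{L^{q}} \le \normIn{g_h}{L^p}$ yields
\[
\int_{\bR} \lvert\sin(c h\xi)\rvert^{q}\,\lvert\FT f(\xi)\rvert^{q}\,\mathrm{d}\xi \;=\; O\bigl(h^{\alpha q}\bigr), \qquad h\to 0 .
\]
Everything that follows is extraction of annular information from this one inequality. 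For $n \ge 0$ set $I_n := \{\xi : 2^{n} \le \lvert\xi\rvert \le 2^{n+1}\}$ and choose the scale $h_n := \varepsilon\, 2^{-n}$, with $\varepsilon>0$ a fixed small absolute constant picked so that $c h_n\lvert\xi\rvert$ stays inside a fixed interval bounded away from the zeros of $\sin$ for every $\xi \in I_n$; then $\lvert\sin(c h_n\xi)\rvert \ge c_0 > 0$ on $I_n$. Since $h_n \to 0$, plugging $h = h_n$ into the displayed estimate and restricting the integral to $I_n$ gives
\[
\int_{I_n} \lvert\FT f(\xi)\rvert^{q}\,\mathrm{d}\xi \;\le\; c_0^{-q}\, C\, h_n^{\alpha q} \;=\; C'\, 2^{-n\alpha q}, \qquad n \ge 0 .
\]

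Next I would pass from $L^{q}$ on each annulus to $L^{\beta}$ on the whole line. For $\beta \le q$, Hölder's inequality on $I_n$ with exponents $q/\beta$ and its conjugate, using $\lvert I_n\rvert \asymp 2^{n}$, gives
\[
\int_{I_n} \lvert\FT f(\xi)\rvert^{\beta}\,\mathrm{d}\xi \;\le\; \Bigl(\int_{I_n}\lvert\FT f\rvert^{q}\Bigr)^{\beta/q} \lvert I_n\rvert^{\,1-\beta/q} \;\le\; C''\, 2^{-n\alpha\beta}\, 2^{\,n(1-\beta/q)} \;=\; C''\, 2^{\,n(1 - \beta/q - \alpha\beta)} .
\]
Summing over $n \ge 0$ converges precisely when $1 - \beta/q - \alpha\beta < 0$, i.e. $\beta\bigl(\tfrac1q + \alpha\bigr) > 1$; since $\tfrac1q = \tfrac{p-1}{p}$, this rearranges to $\beta > \tfrac{p}{p+\alpha p - 1}$. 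The remaining low-frequency piece $\int_{\lvert\xi\rvert\le 1}\lvert\FT f\rvert^{\beta}$ is finite for every $\beta \le q$ by Hölder on the bounded set $\{\lvert\xi\rvert\le1\}$ together with $\FT f\in L^{q}$. Adding the two contributions gives $\FT f\in L^{\beta}(\bR)$ for $\tfrac{p}{p+\alpha p-1} < \beta \le \tfrac{p}{p-1}$, as claimed.

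The main obstacle is the frequency-localization step: one must choose the scale $h_n$ so that the oscillating factor $\sin(c h_n\xi)$ is \emph{uniformly} non-degenerate over an entire dyadic annulus $I_n$ rather than merely at one frequency, which is exactly the point where the geometry of $\bR$ --- and, in the paper's extensions, of a fundamental domain of a lattice --- enters. The ensuing manipulation of the geometric series is routine but it is what pins down the sharp lower threshold $\tfrac{p}{p+\alpha p-1}$; for this reason it is cleanest to record the passage ``$\int \lvert\sin(h\xi)\rvert^{q}\lvert\FT f\rvert^{q}\,\mathrm{d}\xi = O(h^{\gamma}) \Rightarrow \FT f \in L^{\beta}$ for $\beta$ in an explicit range'' as a separate lemma and then read Theorem~84 off from it.
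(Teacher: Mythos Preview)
Your proof is correct and arrives at the same threshold, but it follows a genuinely different route from the paper. The paper applies Jordan's inequality $2|t| \le |\sin(\pi t)|$ on the single interval $|\xi| \le \tfrac{1}{4h}$ to obtain
\[
\int_{-X}^{X} |\xi|^{p'}\,|\FT f(\xi)|^{p'}\,\mathrm{d}\xi = O\bigl(X^{(1-\alpha)p'}\bigr),
\]
then uses H\"older to pass to exponent $\beta$, and finally invokes its \emph{continuous Duren-type lemma} (an integration-by-parts argument converting $\int_0^X x^b F(x)\,\mathrm{d}x = O(X^a)$ into $\int_X^\infty F(x)\,\mathrm{d}x = O(X^{a-b})$) to conclude. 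You instead localise to dyadic annuli $I_n$, choose the scale $h_n \asymp 2^{-n}$ so that the sine factor is \emph{uniformly} bounded below on $I_n$, apply H\"older annulus by annulus, and sum a geometric series.

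The two arguments are equivalent in strength, but they emphasise different mechanisms. Your dyadic argument is self-contained and makes the critical exponent $\tfrac{p}{p+\alpha p-1}$ appear transparently as the convergence threshold of a geometric series; it also sidesteps the need for any auxiliary lemma. The paper's approach, by contrast, deliberately isolates the continuous Duren lemma as its central tool, because variants of that lemma (discrete, lattice, multivariable) are reused throughout the paper to prove all the subsequent Titchmarsh theorems in a uniform way. So while your suggestion to package the passage from the sine-weighted estimate to the $L^\beta$ conclusion as a stand-alone lemma is reasonable, the paper chooses to abstract at a different level---the Duren equivalence itself---since that is what generalises most cleanly to the lattice setting.
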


The second theorem of Titchmarsh is a characterisation of \(L^{2}\)-functions satisfying a certain Lipschitz-type condition in terms of an asymptotic estimate for an integral over its Fourier transform.

\begin{theorem}[{\cite[Theorem 85]{Titchmarsh}}]
	Suppose \(f \in L^{2}(\bR)\) and \(0 < \alpha < 1\). Then the condition
	\[\int_{-\infty}^{\infty} \abs{f(x+h)-f(x-h)}^{2} \, \mathrm{d}x = O\left(\abs{h}^{2\alpha}\right) \quad \text{as} \quad h \to 0\]
	is equivalent with
	\[\left(\int_{-\infty}^{-X} + \int_{X}^{\infty}\right) |\FT{f}(\xi)|^{2} \, \mathrm{d}\xi = O\left(X^{-2\alpha}\right) \quad \text{as} \quad X \to \infty.\]
\end{theorem}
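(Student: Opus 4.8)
The plan is to prove the equivalence in both directions using Plancherel's theorem, which translates the Lipschitz condition on \(f\) into a condition on \(\FT{f}\) involving the weight \(\sin^2(\pi h \xi)\). The starting identity is that, by Plancherel,
\[
\int_{-\infty}^{\infty} \abs{f(x+h)-f(x-h)}^{2}\, \mathrm{d}x = \int_{-\infty}^{\infty} \abs{e^{2\pi i h\xi} - e^{-2\pi i h\xi}}^{2}\, \abs{\FT{f}(\xi)}^{2}\, \mathrm{d}\xi = 4\int_{-\infty}^{\infty} \sin^{2}(2\pi h\xi)\, \abs{\FT{f}(\xi)}^{2}\, \mathrm{d}\xi,
\]
so the hypothesis is equivalent to \(\int_{\bR} \sin^{2}(2\pi h\xi)\, \abs{\FT{f}(\xi)}^{2}\, \mathrm{d}\xi = O(h^{2\alpha})\). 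The whole theorem thus reduces to the purely real-variable assertion that, for a nonnegative \(L^{1}\) weight \(g = \abs{\FT{f}}^{2}\), the tail bound \(\int_{\abs{\xi} > X} g(\xi)\, \mathrm{d}\xi = O(X^{-2\alpha})\) is equivalent to \(\int_{\bR} \sin^{2}(2\pi h \xi)\, g(\xi)\, \mathrm{d}\xi = O(h^{2\alpha})\) as \(h \to 0\).

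For the direction from the tail estimate to the sine-integral estimate, I would split the integral over \(\xi\) at \(\abs{\xi} = 1/h\). On the region \(\abs{\xi} \le 1/h\) I use \(\sin^{2}(2\pi h\xi) \le (2\pi h \xi)^{2}\) and integrate by parts (or use the layer-cake representation) against the tail function \(G(X) := \int_{\abs{\xi}>X} g\); the bound \(G(X) = O(X^{-2\alpha})\) together with \(2\alpha < 2\) makes \(\int_{\abs{\xi}\le 1/h} \xi^{2} g(\xi)\,\mathrm{d}\xi = O(h^{-2+2\alpha})\) converge in the right way, giving \(O(h^{2\alpha})\) after multiplying by \(h^{2}\). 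On the region \(\abs{\xi} > 1/h\) I simply bound \(\sin^{2} \le 1\) and use \(G(1/h) = O(h^{2\alpha})\) directly. For the converse direction, the standard trick is to average the sine-integral estimate over \(h\): since \(\frac{1}{h}\int_{0}^{h} \sin^{2}(2\pi t \xi)\, \mathrm{d}t = \frac{1}{2} - \frac{\sin(4\pi h\xi)}{8\pi h \xi} \ge c > 0\) whenever \(\abs{h\xi}\) is bounded below (say \(\abs{\xi} > 1/h\)), averaging the hypothesis \(\int \sin^{2}(2\pi t\xi) g\, \mathrm{d}\xi = O(t^{2\alpha})\) over \(t \in (0,h)\) yields \(c\int_{\abs{\xi}>1/h} g(\xi)\,\mathrm{d}\xi \le \int_{\bR}\bigl(\tfrac12 - \tfrac{\sin(4\pi h\xi)}{8\pi h\xi}\bigr) g(\xi)\,\mathrm{d}\xi = O(h^{2\alpha})\), which is exactly the desired tail bound with \(X = 1/h\).

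I expect the main obstacle to be the bookkeeping in the first direction: controlling \(\int_{\abs{\xi}\le 1/h}\xi^{2} g(\xi)\,\mathrm{d}\xi\) via the tail function requires care because one must handle the behavior near \(\xi = 0\) (where \(\xi^{2}g\) is harmless) separately from the genuinely divergent part, and one must be careful that the implied constants do not blow up — this is precisely the kind of estimate that Duren's lemma is designed to package cleanly, so I would either invoke the classical Duren's lemma or reprove the needed special case. The converse direction is cleaner but relies on the elementary but slightly fiddly lower bound \(\frac12 - \frac{\sin u}{2u} \ge c\) for \(\abs{u} \ge u_{0} > 0\), which should be stated as a small lemma. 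A final remark: one must check that the equivalence is genuinely with \(0 < \alpha < 1\) and not just \(0 < \alpha\); the restriction \(\alpha < 1\) is what guarantees the integral \(\int_{0}^{1/h}\xi^{2}\,\mathrm{d}G(\xi)\) produces the power \(h^{-2+2\alpha}\) rather than being dominated by an endpoint term, so this should be flagged where it is used.
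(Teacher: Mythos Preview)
The paper does not actually prove this theorem; it is quoted from Titchmarsh's book in the introduction and then used as motivation. The closest thing to a ``paper's own proof'' is the argument for the analogous result on fundamental domains, \cref{th:secondTitchmarsh}, whose strategy translates verbatim to the line via the continuous Duren lemma (\cref{lemma:DurenCont}). Compared against that strategy, your proposal is correct but handles the two implications differently.

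For the direction \emph{tail estimate $\Rightarrow$ Lipschitz}, your approach and the paper's coincide: both split at $|\xi|\sim 1/h$, bound $\sin^2\le(\cdot)^2$ on the inner region and $\sin^2\le 1$ on the outer region, and control $\int_{|\xi|\le 1/h}\xi^2 g(\xi)\,\mathrm{d}\xi$ via a Duren-type argument (this is exactly where $\alpha<1$ enters, as you correctly flag). For the direction \emph{Lipschitz $\Rightarrow$ tail estimate}, the paper's route is different from yours. The paper would use Jordan's inequality $|\sin(\pi t)|\ge 2|t|$ on $|t|\le\tfrac12$ to obtain directly
\[
\int_{|\xi|\le X}\xi^{2}\,|\FT{f}(\xi)|^{2}\,\mathrm{d}\xi=O\bigl(X^{2(1-\alpha)}\bigr),
\]
and then invoke the continuous Duren lemma to convert this to the tail bound. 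Your averaging trick --- integrating the hypothesis over $t\in(0,h)$ to produce the Fej\'er-type kernel $\tfrac12-\tfrac{\sin(4\pi h\xi)}{8\pi h\xi}$, which is nonnegative everywhere and bounded below on $|\xi|>1/h$ --- bypasses Duren's lemma entirely for this implication and yields the tail estimate in one step. Both arguments are standard; yours is closer to Titchmarsh's original, while the paper's is the systematic Duren-lemma approach it develops throughout.
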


These theorems have been extended to several different settings. Younis considered Titchmarsh theorems on the (one- and two-dimensional) torus and on compact zero-dimensional groups in \cite{LipThesis} using Duren's lemma \cite{Duren}. He conducted further studies in \cite{LipDom} on the comparison of the Fourier transform of certain Lipschitz functions with the Hankel transforms of these functions and with their Fourier transforms on the Euclidean Cartan motion group \(M(n)\) for \(n \geq 2\), and in \cite{TitchMarshHyperbolic} on Titchmarsh-like theorems for complex-valued functions on the non-Euclidean hyperbolic place.

Titchmarsh theorems on compact homogeneous manifolds were studied in \cite{LipArticle} with a new approach for proving the first Titchmarsh theorem. An alternative Lipschitz condition based on spherical mean operators was considered in \cite{Bray}. Analogues of the classical Titchmarsh theorem have been established on the image under the discrete Fourier–Laplace transform of a set of functions satisfying a generalized Lipschitz condition in the space $L^2$ on the sphere \cite{TitchmarshSpherical}, on symmetric spaces of rank $1$ \cite{Platonov}, on Damek-Ricci spaces \cite{DamekRicci}, on harmonic $NA$ groups \cite{TitchmarshNAgroups}, in a Laguerre hypergroup \cite{LaguerreHypergroup}, and for the Fourier-Walsh transform on $[0, \infty)$ \cite{FourierWalsh}.

We are primarily concerned with generalizing the Titchmarsh theorems to the setting of fundamental domains of lattices. A fundamental domain of a lattice \(L = A\bZ^{d}\), where the generator matrix \(A\) is an invertible matrix, is a measurable set \(\Omega \subseteq \bR^{d}\) such that \(\Omega + L = \bR^{d}\) as a direct sum. Fuglede's theorem ensures that \(\{e^{2\pi i \kappa \cdot x} \mid \kappa \in \dualLat{L}\}\) is an orthonormal basis for \(L^{2}(\Omega)\), which we endow with the normalised measure \(\mathrm{d}x/\abs{\Omega}\), where the dual lattice is defined as \(\dualLat{L} := \left(\transp{A}\right)^{-1} \bZ^{d}\). In this way one can define the Fourier transform \(\cF_{\Omega}: L^{2}(\Omega) \to \ell^{2}(\dualLat{L})\) by
\[\cF_{\Omega}{f}(\kappa) := \cF_{\Omega} f(\kappa) := \frac{1}{\abs{\Omega}} \int_{\Omega} f(x) \, e^{-2 \pi i \kappa \cdot x} \, \d{x}.\]
Plancherel's formula
\begin{equation}\label{eq:Plancherel}
	\normIn{f}{L^{2}(\Omega)}^{2} = \lVert{\FT{f}}\rVert_{\ell^{2}(\dualLat{L})}^{2}
\end{equation}
for \(f \in L^{2}(\Omega)\) follows directly from this construction.
Moreover, it has been shown using interpolation techniques that the Hausdorff-Young inequality also holds in this setting.
\begin{theorem}[Hausdorff-Young inequality]
	Let \(1 \leq p \leq 2\) and \(\frac{1}{p} + \frac{1}{p'} = 1\). If \(f \in L^{p}(\Omega)\), then \(\FT{f} \in \ell^{p'}(\dualLat{L})\) and
	\begin{equation}\label{ineq:HY}
		\lVert{\FT{f}}\rVert_{\ell^{p'}(\dualLat{L})} \leq \normIn{f}{L^{p}(\Omega)}.
	\end{equation}
\end{theorem}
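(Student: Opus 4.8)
The plan is to obtain \eqref{ineq:HY} from the Riesz--Thorin interpolation theorem, exactly as in the classical Euclidean case, using the elementary \(L^{1}\)--\(\ell^{\infty}\) estimate and Plancherel's formula \eqref{eq:Plancherel} as the two endpoints.

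First I would record that, since \(\Omega\) is a fundamental domain of \(L = A\bZ^{d}\), it has finite Lebesgue measure (equal to \(\abs{\det A}\)); consequently, with the normalised measure \(\d{x}/\abs{\Omega}\), one has the inclusions \(L^{2}(\Omega) \subseteq L^{p}(\Omega) \subseteq L^{1}(\Omega)\) for every \(1 \leq p \leq 2\), and in particular \(\cF_{\Omega} f(\kappa)\) is defined by an absolutely convergent integral for each \(f \in L^{1}(\Omega)\) and each \(\kappa \in \dualLat{L}\). The endpoint \(p = 1\) is then immediate: directly from the definition,
\[
	\abs{\cF_{\Omega} f(\kappa)} \leq \frac{1}{\abs{\Omega}} \int_{\Omega} \abs{f(x)} \, \d{x} = \normIn{f}{L^{1}(\Omega)},
\]
so \(\cF_{\Omega}\) maps \(L^{1}(\Omega)\) into \(\ell^{\infty}(\dualLat{L})\) with norm at most \(1\). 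The endpoint \(p = 2\) is Plancherel's formula \eqref{eq:Plancherel}, which says that \(\cF_{\Omega} \colon L^{2}(\Omega) \to \ell^{2}(\dualLat{L})\) is an isometry.

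Next I would apply Riesz--Thorin to the linear operator \(\cF_{\Omega}\), regarded as defined on \(L^{2}(\Omega)\) --- a dense subspace of every \(L^{p}(\Omega)\) with \(1 \leq p \leq 2\) --- and mapping into sequence spaces over the countable, hence \(\sigma\)-finite, set \(\dualLat{L}\) with counting measure. Writing \(\tfrac{1}{p} = (1-\theta) \cdot 1 + \theta \cdot \tfrac{1}{2}\) with \(\theta \in [0,1]\) gives \(\theta = \tfrac{2}{p'}\), and interpolating the target exponents \(\infty\) and \(2\) yields the output exponent \(q\) with \(\tfrac{1}{q} = \tfrac{\theta}{2} = \tfrac{1}{p'}\), i.e.\ \(q = p'\). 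Since both endpoint norms equal \(1\), Riesz--Thorin produces
\[
	\lVert \cF_{\Omega} f \rVert_{\ell^{p'}(\dualLat{L})} \leq 1^{\,1-\theta} \, 1^{\,\theta} \, \normIn{f}{L^{p}(\Omega)} = \normIn{f}{L^{p}(\Omega)} \qquad \text{for all } f \in L^{2}(\Omega).
\]
Because \(L^{2}(\Omega)\) is dense in \(L^{p}(\Omega)\) and \(\ell^{p'}(\dualLat{L})\) is complete, the operator extends uniquely and continuously to all of \(L^{p}(\Omega)\) with the same bound, which is \eqref{ineq:HY}; in particular \(\FT{f} \in \ell^{p'}(\dualLat{L})\).

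There is no genuine obstacle in this argument. The only point meriting a moment's attention is that the interpolation is carried out between two different measure spaces --- \(\Omega\) with normalised Lebesgue measure on the domain side, \(\dualLat{L}\) with counting measure on the target side --- but the Riesz--Thorin theorem holds for arbitrary \(\sigma\)-finite measure spaces, so this causes no difficulty. Equivalently one could interpolate the explicit estimates on the dense class of simple functions, or invoke Stein's complex interpolation theorem; the bookkeeping is the same in all cases.
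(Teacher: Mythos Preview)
Your proof is correct and matches precisely the approach the paper alludes to: the paper does not give its own proof of this theorem but simply records that ``it has been shown using interpolation techniques that the Hausdorff-Young inequality also holds in this setting'' and refers to \cite{LpLqBoundedness} for details. Your Riesz--Thorin argument, with the \(L^{1}\)--\(\ell^{\infty}\) bound and Plancherel's formula \eqref{eq:Plancherel} as endpoints, is exactly that interpolation argument carried out in full.
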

A more detailed outline of the necessary concepts can be found in \cite{LpLqBoundedness}.

In this setting we introduce two different types of H\"older-Lipschitz spaces, which we will call \emph{additive} and \emph{multiplicative}. The additive H\"older-Lipschitz functions satisfy a Lipschitz-type condition of the form $\normIn{f(\cdot + h) - f(\cdot)}{L^p(\Omega)} = O\left(\abs{h}^{\alpha}\right)$, and they are called additive because the error term is equivalent to $O(h_{1}^{\alpha} + \dots + h_{d}^{\alpha})$. On the other hand, multiplicative H\"older-Lipschitz spaces satisfy a Lipschitz-type condition of the form $\normIn{\diffOpForw_{1}^{h_1} \dots \diffOpForw_{d}^{h_d} f}{L^p(\Omega)} = O\left(h_{1}^{\alpha_1} \dots h_{d}^{\alpha_d}\right)$, where $\diffOpForw_{j}^{h_j}$ is the difference operator with respect to the $j$-th coordinate with step $h_j$, and are called as such because the error term consists of a multiplication of coordinate-wise error terms.

Our proof strategies for the Titchmarsh theorems in this new setting is based on applications of Duren-type lemmas, which is firstly illustrated in \cref{s:TitchRd} with a new approach for the known cases in $\bR^d$ via continuous Duren-type lemmas. Adapting a multidimensional version of Duren's lemma to the lattice setting, we obtain for the additive Hölder-Lipschitz spaces the following form for the first Titchmarsh theorem.
\begin{itheorem}
	If \(f \in \HoldLipSpaceFD{\alpha}{p}\) for some \(0 < \alpha \leq 1\) and \(1 < p \leq 2\), then $\widehat{f} \in \ell^{\gamma}(\dualLat{L})$ for
	\[\frac{p}{p + \frac{\alpha}{d} p - 1} < \gamma \leq \infty.\]
\end{itheorem}
We discuss the second Titchmarsh theorem via two approaches originating from \cite{LipThesis} and \cite{LipArticle}.
\begin{itheorem}
	Let \(0 < \alpha < 1\) and \(f \in L^{2}(\Omega)\). Then \(f \in \HoldLipSpaceFD{\alpha}{2}\) if and only if
	\begin{equation}
		\sum_{\abs{\kappa} > N} |\FT{f}(\kappa)|^{2} = O\left(N^{-2\alpha}\right).
	\end{equation}
\end{itheorem}
An application of the second Titchmarsh theorem gives the following boundedness result of Fourier multipliers on additive H\"older-Lipschitz spaces.
\begin{itheorem}
	Let \(\gamma \in \bR\) with \(0 \leq \gamma < 1\). Suppose that for some $C > 0$ the function \(\sigma : \dualLat{L} \to \bC\) satisfies the growth estimate
	\[\abs{\sigma(\kappa)} \leq C \langle \kappa \rangle^{-\gamma} \quad \text{with} \quad \langle \kappa \rangle := (1+\abs{\kappa}^{2})^{\frac{1}{2}}.\]
	Let \(T\) be the Fourier multiplier with symbol \(\sigma\), i.e., \(\FT{Tf}(\kappa) = \sigma(\kappa) \FT{f}(\kappa)\) for all \(\kappa \in \dualLat{L}\). Then
	\[T : \HoldLipSpaceFD{\alpha}{2} \to \HoldLipSpaceFD{\alpha + \gamma}{2}\]
	is bounded for every \(\alpha \in \bR\) with \(0 < \alpha < 1 - \gamma\).
\end{itheorem}
Similar results are discussed for the multiplicative H\"older-Lipschitz spaces.

\subsection{Organization of the paper}

In \cref{s:TitchRd} we discuss Titchmarsh's proof of the first Titchmarsh theorem in \cite{Titchmarsh}. We zoom in on a particular step in the proof that has been left rather vague, and fill this gap with a continuous versions of Duren's lemma. This lemma has been extended to higher dimensions in \cref{lemma:DurenContAdd,lemma:DurenContMult}, which are fit for the proofs of the first Titchmarsh theorem for the additive and multiplicative H\"older-Lipschitz spaces in $\bR^d$. The approach for proving the first Titchmarsh theorem with the help of continuous versions of Duren's lemma will be exemplary for the proof strategies later on in the setting of fundamental domains of lattices.

The Titchmarsh theorems for additive and multiplicative H\"older-Lipschitz spaces are treated in \cref{s:TitchThFundDom} via lattice versions of the multidimensional Duren lemma. We discuss both the proof strategy from \cite{LipThesis} and \cite{LipArticle} for the first Titchmarsh theorem in the additive case. Sharpness for the first Titchmarsh theorem is discussed in both the additive and multiplicative case.

As an application of the second Titchmarsh theorem, boundedness results for Fourier multipliers on H\"older-Lipschitz space with respect to a norm related to the asymptotic estimate in the second Titchmarsh theorem are derived in \cref{s:FourierMult}. In the additive case we deduce Lipschitz-Sobolev regularity for Bessel potential operators on fundamental domains of lattices from this boundedness result, and we discuss the relation of our newly introduced norm with another one that appears in the literature via a refined formulation of Duren's lemma.

\subsection{Notation and conventions}

We follow the convention that \(0 \in \bN\).

Throughout this paper \(L\) stands for a lattice in \(\bR^{d}\), and \(\Omega\) denotes a fundamental domain of \(L\).

We denote the conjugate exponent of \(1 \leq p \leq \infty\) by \(p'\), i.e., \(\frac{1}{p} + \frac{1}{p'} = 1\).

Let \(f,g : \bR^n \to \bR\) be two functions for some $n \in \bN$. We use the big $O$ notation $f(x) = O(g(x))$ as $x \to a \in \bR \cup \{\infty\}$ to denote that there exists a constant $C < 0$ and a neighborhood $U$ of $a$ such that $f(x) \leq C g(x)$ for all $x \in U$. We sometimes denote this by \(f(x) \lesssim g(x)\) if the point $a$ is known from the context.

We define the floor function $\floor{x}$ as the greatest integer less than or equal to $x \in \bR$.

For vectors \(x,y \in \bR^{d}\) we write \(x \cdot y = \sum_{j=1}^{d} x_{j} y_{j}\) for the Euclidean inner product and \(\abs{x} = \sqrt{x \cdot x}\) for the Euclidean norm.
We denote by \(\abs{\cdot}_{p}\) the \(p\)-norm on \(\bR^{d}\) for \(1 \leq p \leq \infty\), i.e.,
\[\abs{x}_{p} := \begin{dcases}
    \left(\sum_{j=1}^{d} \abs{x_{j}}^{p}\right)^{1/p} &\quad \text{if} \quad 1 \leq p < \infty \\
    \max_{1 \leq j \leq d} \abs{x_{j}} &\quad \text{if} \quad p = \infty
\end{dcases}.\]
We simply write \(\abs{\cdot}\) for \(\abs{\cdot}_{2}\).

\section{The first Titchmarsh theorem in \texorpdfstring{$\bR$}{bR} via continuous Duren-type lemmas}\label{s:TitchRd}

The main approach for proving Titchmarsh theorems in this paper is centered around Duren's lemma, which was proven in a discrete setting \cite[101]{Duren}. We will firstly illustrate this approach in a continuous setting by giving a proof of the first Titchmarsh theorem in the case of H\"older-Lipschitz function on $\bR$.

In his proof of the original first Titchmarsh theorem \cite[Theorem 84]{Titchmarsh}, Titchmarsh obtained that
\[\varphi(\xi) := \int_{1}^{\xi} \abs{x}^{\beta} \, |\FT{f}(x)|^{\beta} \, \d{x} = O\left( \xi^{1 - \alpha \beta + \frac{\beta}{p}} \right)\]
under the assumptions that $1 < p \leq 2$, $0 < \alpha \leq 1$ and $1 \leq \beta < p'$, where $\FT{f}$ is the Fourier transform of $f \in L^p(\bR)$. Then he calculates that
\[\int_{1}^{\xi} |\FT{f}(x)|^{\beta} \, \d{x} = O\left( \xi^{1 - \beta - \alpha \beta + \frac{\beta}{p}} \right),\]
from which he concluded that $\int_{1}^{\infty} |\FT{f}(x)| \, \d{x}$ converges if $1 - \beta - \alpha \beta + \frac{\beta}{p} \leq 0$. This last deduction is somewhat erroneous since $\int_{1}^{\xi} |\FT{f}(x)|^{\beta} \, \d{x} = O\left( \xi^{1 - \beta - \alpha \beta + \frac{\beta}{p}} \right)$ with $1 - \beta - \alpha \beta + \frac{\beta}{p} < 0$ would imply that $\int_{1}^{\infty} |\FT{f}(x)| \, \d{x} = 0$ so that $\FT{f} = 0$ on $[1,\infty]$. The error is that the $O$-estimate is assumed to be valid globally instead of only for sufficiently large $\xi$. Instead, one should look for sufficiently large $\xi$ at
\[\int_{\xi}^{\infty} |\FT{f}(x)|^{\beta} \d{x} = \int_{\xi}^{\infty} x^{-\beta} \, \varphi'(x) \, \d{x} = \lim_{x \to \infty} x^{-\beta} \varphi(x) - \xi^{-\beta} \varphi(\xi) + \beta \int_{\xi}^{\infty} x^{-\beta-1} \varphi(x) \, \d{x}.
\]
If $1 - \alpha \beta + \frac{\beta}{p} < \beta$, then $\lim_{x \to \infty} x^{-\beta} \varphi(x) = 0$ so that
\[\int_{\xi}^{\infty} |\FT{f}(x)|^{\beta} \, \d{x} \leq \beta \int_{\xi}^{\infty} O\left(x^{-\beta - \alpha \beta + \frac{\beta}{p}}\right) \, \d{x} = O\left(\xi^{1 - \beta - \alpha \beta + \frac{\beta}{p}}\right).\]
We can do the same calculation for $\int_{-\infty}^{-\xi} |\FT{f}(x)| \, \d{x}$. From this, we can indeed conclude that $\int_{-\infty}^{\infty} |\FT{f}(x)|^{\beta} \, \d{x}$ converges if $1 - \beta - \alpha \beta + \frac{\beta}{p} < 0$.

The essence of this corrected approach, where we go from an estimate for the integral $\int_{1}^{\xi} \abs{x}^{\beta} |\FT{f}(x)|^{\beta} \, \d{x}$ over a bounded domain to an estimate for the integral $\int_{\xi}^{\infty} |\FT{f}(x)|^{\beta} \, \d{x}$ over an unbounded domain, can be distilled into the following continuous version of Duren's lemma.

\begin{lemma}[Continuous Duren-type lemma]\label{lemma:DurenCont}
	Consider a non-negative function $f \in L^1[0,\infty)$, and let $0 < a < b$. The estimate
	\begin{equation}\label{eq:DurenCont1}
		\int_{0}^{X} x^b \, f(x) \, \d{x} = O(X^a)
	\end{equation}
	is equivalent with
	\begin{equation}\label{eq:DurenCont2}
		\int_{X}^{\infty} f(x) \, \d{x} = O(X^{a-b}).
	\end{equation}
\end{lemma}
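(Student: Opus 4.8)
The plan is to prove the two implications separately, in each case using integration by parts (Fubini/Tonelli for the integral version) to transfer between a weighted integral over a bounded interval and an unweighted integral over an unbounded one, exactly mirroring the corrected computation sketched just above the lemma. Throughout I write $F(X) := \int_0^X x^b f(x)\,\d{x}$ and $G(X) := \int_X^\infty f(x)\,\d{x}$; since $f \in L^1[0,\infty)$ is non-negative, both are well defined, $F$ is non-decreasing, and $G(X) \to 0$ as $X \to \infty$. Note $0 < a < b$ is used crucially: $a > 0$ makes the hypothesis \eqref{eq:DurenCont1} non-trivial near $0$ and controls boundary terms, while $b - a > 0$ is what makes the target exponent $a - b$ negative (decay).

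\textbf{Proof that \eqref{eq:DurenCont1} implies \eqref{eq:DurenCont2}.} Assume $F(X) \le C X^a$ for all $X \ge X_0$ (and note $F$ is bounded on $[0,X_0]$ anyway). For $X \ge X_0$ write, using $f(x) = x^{-b}\cdot x^b f(x)$ and integrating by parts with $\d F(x) = x^b f(x)\,\d{x}$,
\[
\int_X^\infty f(x)\,\d{x} = \int_X^\infty x^{-b}\,\d F(x) = \Bigl[x^{-b}F(x)\Bigr]_X^\infty + b\int_X^\infty x^{-b-1}F(x)\,\d{x}.
\]
Here the upper boundary term vanishes because $x^{-b}F(x) \le C x^{a-b} \to 0$, and the lower term is $-X^{-b}F(X) \le 0$, so it can only help. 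Hence
\[
\int_X^\infty f(x)\,\d{x} \le b\int_X^\infty x^{-b-1}F(x)\,\d{x} \le bC\int_X^\infty x^{a-b-1}\,\d{x} = \frac{bC}{b-a}\,X^{a-b},
\]
which is \eqref{eq:DurenCont2}. (If one prefers to avoid Stieltjes integration, the same identity follows from Tonelli's theorem by writing $\int_X^\infty f(x)\,\d{x} = \int_X^\infty \bigl(\int_x^\infty b\,t^{-b-1}\,\d{t} + \text{const}\bigr)$-type manipulations, or more directly swapping the order in $b\int_X^\infty t^{-b-1}\bigl(\int_X^t x^b f(x)\,\d{x}\bigr)\d{t}$.)

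\textbf{Proof that \eqref{eq:DurenCont2} implies \eqref{eq:DurenCont1}.} Assume $G(X) \le C X^{a-b}$ for $X \ge X_0$. Now integrate by parts the other way: with $\d(-G)(x) = f(x)\,\d{x}$,
\[
\int_{X_0}^X x^b f(x)\,\d{x} = \Bigl[-x^b G(x)\Bigr]_{X_0}^X + b\int_{X_0}^X x^{b-1}G(x)\,\d{x} \le X_0^b G(X_0) + bC\int_{X_0}^X x^{a-1}\,\d{x},
\]
where we dropped the non-positive term $-X^b G(X)$. Since $a > 0$, $\int_{X_0}^X x^{a-1}\,\d{x} \le X^a/a$, so $\int_{X_0}^X x^b f(x)\,\d{x} \le X_0^b G(X_0) + \tfrac{bC}{a}X^a$; adding the fixed bounded contribution $\int_0^{X_0} x^b f(x)\,\d{x}$ gives $F(X) = O(X^a)$, i.e. \eqref{eq:DurenCont1}.

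The only genuinely delicate point is justifying that the boundary term at infinity vanishes and that the integration-by-parts / Fubini swap is legitimate given only $f \in L^1[0,\infty)$ and non-negativity; this is where I expect to spend the most care, and it is handled by the decay estimate $x^{-b}F(x) = O(x^{a-b}) \to 0$ in one direction and by $G(X) \to 0$ together with $a - b < 0$ in the other, all of which are consequences of the standing hypotheses $0 < a < b$. Everything else is a routine application of Tonelli's theorem (all integrands are non-negative, so no integrability subtleties arise in the swaps) and elementary estimates on $\int x^{c}\,\d{x}$ with the sign of $c$ tracked carefully.
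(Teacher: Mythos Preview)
Your proof is correct and follows essentially the same approach as the paper: both directions are handled by integration by parts, writing $\int f = \int x^{-b}\,\d F$ in one direction and $\int x^b f = -\int x^b\,\d G$ in the other, then discarding the favourable boundary term and integrating the power of $x$ that remains. The only cosmetic difference is that the paper works first on a finite interval $[X,Y]$ and passes to the limit $Y\to\infty$, whereas you invoke the Stieltjes integral on $[X,\infty)$ directly and justify the vanishing boundary term afterwards; the content is identical.
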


\begin{proof}
	Suppose that \eqref{eq:DurenCont1} holds. Set $\varphi(X) := \int_{0}^{X} x^b \, f(x) \, \d{x}$. We find for sufficiently large $X < Y$ that
	\begin{align*}
		\int_{X}^{Y} f(x) \, \d{x} = \int_{X}^{Y} x^{-b} \, \varphi'(x) \, \d{x}
		&= Y^{-b} \varphi(Y) - X^{-b} \varphi(X) + b \int_{X}^{Y} x^{-b-1} \, \varphi(x) \, \d{x} \\
		&\lesssim Y^{a-b} + b \int_{X}^{Y} x^{a-b-1} \, \d{x}.
	\end{align*}
	Hence, in the limit $Y \to \infty$ we get
	\[\int_{X}^{\infty} f(x) \, \d{x} \lesssim b \int_{X}^{\infty} x^{a-b-1} \, \d{x} = \frac{b}{b-a} X^{a-b}.\]
	
	Conversely, assume that \eqref{eq:DurenCont2} holds. Set $\psi(X) := \int_{X}^{\infty} f(x) \, \d{x}$. Since $\psi(X) = O(X^{a-b})$, there is some $X_0 > 0$ and $C > 0$ such that for all $X \geq X_0$ we have $\psi(X) \leq C X^{a-b}$. We then find that
	\begin{align*}
		\int_{0}^{X} x^b f(x) \, \d{x}
		= - \int_{0}^{X} x^b \, \psi'(x) \, \d{x}
		&= -X^b \psi(X) + b \int_{0}^{X} x^{b-1} \psi(x) \, \d{x} \\
		&\leq \int_{0}^{X_0} x^{b-1} \psi(x) \, \d{x} + C \int_{X_0}^{X} x^{a-1} \, \d{x} \\
		&= O(X^a),
	\end{align*}
	completing the proof.
\end{proof}

As preparation for the proof of the first Titchmarsh theorem, let us recall an inequality that we will rely on.

\begin{lemma}[{Jordan's inequality, \cite[Inequality (1.1)]{JordanInequality}}]
	For \(\abs{t} \leq \frac{1}{2}\) we have the following inequality:
	\begin{equation}\label{ineq:JordanIneq}
		2\abs{t} \leq \abs{\sin \pi t} \leq \pi \abs{t}.
	\end{equation}
\end{lemma}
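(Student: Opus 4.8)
The plan is to combine symmetry with two standard one-variable facts about the sine function. Since both \(t \mapsto \abs{\sin \pi t}\) and \(t \mapsto \abs{t}\) are even, it suffices to treat \(0 \le t \le \tfrac12\). Setting \(x := \pi t\), this range corresponds exactly to \(x \in [0, \tfrac{\pi}{2}]\), where \(\sin x \ge 0\) so that \(\abs{\sin \pi t} = \sin x\), and the asserted double inequality \eqref{ineq:JordanIneq} becomes
\[
	\frac{2x}{\pi} \le \sin x \le x, \qquad x \in \Bigl[0, \frac{\pi}{2}\Bigr].
\]

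For the right-hand inequality I would invoke the elementary bound \(\sin x \le x\) valid for all \(x \ge 0\): the function \(g(x) := x - \sin x\) satisfies \(g(0) = 0\) and \(g'(x) = 1 - \cos x \ge 0\), hence \(g\) is non-decreasing and therefore non-negative on \([0, \infty)\). For the left-hand inequality I would use the concavity of \(\sin\) on \([0, \pi]\), where \((\sin)''(x) = -\sin x \le 0\). Concavity forces the graph of \(\sin\) on \([0, \tfrac{\pi}{2}]\) to lie above the chord through the endpoints \((0,0)\) and \((\tfrac{\pi}{2}, 1)\), and that chord is precisely the line \(y = 2x/\pi\); this yields \(\sin x \ge 2x/\pi\). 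Equivalently, one can check that \(x \mapsto (\sin x)/x\) is non-increasing on \((0, \tfrac{\pi}{2}]\) — after the quotient rule, the numerator \(x\cos x - \sin x\) vanishes at \(0\) and has derivative \(-x\sin x \le 0\) — so this ratio is minimized at \(x = \tfrac{\pi}{2}\), where it equals \(2/\pi\).

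There is no genuine obstacle in this argument; it amounts to assembling well-known monotonicity and convexity facts. The only point deserving a moment's attention is the reduction step: the hypothesis \(\abs{t} \le \tfrac12\) is exactly what guarantees that, after reducing to \(t \ge 0\) and setting \(x = \pi t\), the variable \(x\) stays in \([0, \tfrac{\pi}{2}]\), the interval on which \(\sin\) is simultaneously non-negative and concave.
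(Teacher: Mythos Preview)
Your proof is correct and follows the standard argument for Jordan's inequality. Note, however, that the paper does not actually prove this lemma: it is stated with a citation to an external reference and used as a black box throughout. There is therefore no ``paper's own proof'' to compare against; your argument simply fills in what the paper takes for granted.
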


For convenience of the reader and to set the stage for our proof strategies later, we will now provide a cleaned up version of the proof of the first Titchmarsh theorem with the help of the continuous version of Duren's lemma.

\begin{theorem}[{\cite[Theorem 84]{Titchmarsh}}]
	Suppose that \(f \in L^{p}(\bR)\), for some \(1 < p \leq 2\), satisfies the Lipschitz-type condition
	\[\int_{-\infty}^{\infty} \abs{f(x+h)-f(x-h)}^{p} \, \mathrm{d}x = O\left(h^{\alpha p}\right) \quad \text{as} \quad h \to 0^{+}\]
	for some $0 < \alpha \leq 1$.
	Then its Fourier transform \(\FT{f}\) belongs to \(L^{\beta}(\bR)\) for
	\[\frac{p}{p+\alpha p -1} < \beta \leq p' = \frac{p}{p-1}.\]
\end{theorem}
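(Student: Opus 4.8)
The plan is to run the corrected argument indicated above through the continuous Duren-type lemma \cref{lemma:DurenCont}. First I would fix $h > 0$, set $g_h := f(\cdot + h) - f(\cdot - h) \in L^p(\bR)$, and use that $\FT{g_h}(\xi) = 2i\sin(2\pi h\xi)\FT{f}(\xi)$ (with the normalisation $\FT{f}(\xi) = \int_{\bR} f(x)\,e^{-2\pi i x\xi}\,\d{x}$; other normalisations only change constants). The Hausdorff--Young inequality applied to $g_h$, together with the hypothesis in the form $\normIn{g_h}{L^p(\bR)} = O(h^{\alpha})$, then gives as $h \to 0^{+}$
\[
\int_{-\infty}^{\infty} \abs{2\sin(2\pi h\xi)}^{p'}\abs{\FT{f}(\xi)}^{p'}\,\d{\xi} \;=\; \normIn{\FT{g_h}}{L^{p'}(\bR)}^{p'} \;\leq\; \normIn{g_h}{L^p(\bR)}^{p'} \;=\; O\!\left(h^{\alpha p'}\right).
\]
Specialising to $h = \tfrac{1}{4X}$ with $X$ large and invoking Jordan's inequality \eqref{ineq:JordanIneq}, which yields $\abs{2\sin(2\pi h\xi)} \geq 2\abs{\xi}/X$ on $\abs{\xi} \leq X$ (there $\abs{2h\xi}\leq\tfrac12$), and then discarding the rest of the integral, I obtain
\[
\int_{\abs{\xi}\leq X} \abs{\xi}^{p'}\abs{\FT{f}(\xi)}^{p'}\,\d{\xi} \;=\; O\!\left(X^{p'(1-\alpha)}\right).
\]

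The key step is to convert this bounded-window estimate into a tail estimate by means of \cref{lemma:DurenCont}. Assuming first $0 < \alpha < 1$, let $F$ be the nonnegative function on $[0,\infty)$ equal to $\abs{\FT{f}(\xi)}^{p'} + \abs{\FT{f}(-\xi)}^{p'}$ for $\xi \geq 1$ and to $0$ for $0 \leq \xi < 1$; it lies in $L^1[0,\infty)$ since $\FT{f} \in L^{p'}(\bR)$ by Hausdorff--Young. The previous display says $\int_0^X \xi^{p'}F(\xi)\,\d{\xi} = O(X^{p'(1-\alpha)})$, so \cref{lemma:DurenCont} with $b = p'$ and $a = p'(1-\alpha) \in (0,p')$ delivers
\[
\int_{\abs{\xi}\geq X}\abs{\FT{f}(\xi)}^{p'}\,\d{\xi} \;=\; O\!\left(X^{-\alpha p'}\right).
\]

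It remains to pass from this tail estimate to $L^\beta$-integrability. The endpoint $\beta = p'$ is immediate from Hausdorff--Young, so fix $\tfrac{p}{p+\alpha p -1} < \beta < p'$. On $\{\abs{\xi} < 1\}$ one trivially has $\FT{f} \in L^\beta$, and on each shell $A_n := \{2^n \leq \abs{\xi} < 2^{n+1}\}$, $n \geq 0$, Hölder's inequality with exponents $\tfrac{p'}{\beta}$ and $\big(\tfrac{p'}{\beta}\big)'$, the tail estimate, and $\abs{A_n} \leq 2^{n+1}$ give
\[
\int_{A_n}\abs{\FT{f}}^{\beta}\,\d{\xi} \;\leq\; \left(\int_{\abs{\xi}\geq 2^n}\abs{\FT{f}}^{p'}\,\d{\xi}\right)^{\!\beta/p'}\abs{A_n}^{1-\beta/p'} \;\lesssim\; 2^{\,n\left(1-\frac{\beta}{p'}-\alpha\beta\right)};
\]
since $\beta > \tfrac{p}{p+\alpha p -1}$ is precisely the condition $\tfrac{\beta}{p'} + \alpha\beta > 1$, this geometric series converges and $\FT{f} \in L^\beta(\bR)$ follows. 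The endpoint $\alpha = 1$ I would deduce by a limiting argument: the hypothesis for $\alpha = 1$ implies the one for every $\alpha' < 1$ (because $h^{p} \leq h^{\alpha' p}$ for $0 < h \leq 1$), so the case already treated gives $\FT{f} \in L^\beta(\bR)$ whenever $\beta > \tfrac{p}{p+\alpha' p -1}$, and letting $\alpha' \uparrow 1$ covers the full range $\tfrac{p}{2p-1} < \beta \leq p'$.

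The conceptual heart of the argument is the use of \cref{lemma:DurenCont} to move from an estimate on a bounded frequency window to a tail estimate for $\FT{f}$ — exactly the passage Titchmarsh carried out imprecisely. The only place that needs genuine care is the exponent bookkeeping in the final dyadic step, which is what produces the sharp lower bound $\tfrac{p}{p+\alpha p -1}$; the mild degeneracy of \cref{lemma:DurenCont} at $a = 0$ is the (minor) reason the case $\alpha = 1$ is handled separately.
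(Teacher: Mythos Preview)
Your proof is correct and uses the same ingredients as the paper --- Hausdorff--Young, Jordan's inequality, and the continuous Duren-type lemma --- but you apply them in a different order. The paper first uses H\"older's inequality on the bounded-window estimate to pass from exponent $p'$ to exponent $\beta$, obtaining $\int_{-X}^{X}\abs{\xi}^{\beta}|\FT{f}(\xi)|^{\beta}\,\d\xi = O\bigl(X^{(1-\alpha)\beta+1-\beta/p'}\bigr)$, and only then invokes \cref{lemma:DurenCont} directly on $|\FT{f}|^{\beta}$; this handles all $0<\alpha\le 1$ uniformly because the Duren exponent $a=(1-\alpha)\beta+1-\beta/p'$ stays positive even at $\alpha=1$ (thanks to the $1-\beta/p'$ term). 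You instead apply Duren first to $|\FT{f}|^{p'}$ and then recover $L^{\beta}$ via a dyadic H\"older argument; the price is that at $\alpha=1$ the Duren exponent $a=p'(1-\alpha)$ degenerates to $0$, forcing your separate limiting argument. Both routes yield the sharp threshold, but the paper's ordering avoids the case split.
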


\begin{proof}
	A straightforward calculation gives for $h > 0$ that
	\[\FT{f(\cdot + h) - f(\cdot - h)}(\xi) = (e^{2 \pi i \xi h} - e^{-2 \pi i \xi h}) \FT{f}(\xi) = 2 i \sin(2 \pi \xi h) \FT{f}(\xi)\]
	so that by Jordan's inequality \eqref{ineq:JordanIneq} and the Hausdorff-Young inequality we get
	\begin{align*}
		2^{3p'} \int_{-\frac{1}{4h}}^{\frac{1}{4h}} \abs{\xi}^{p'} \, h^{p'} \, |\FT{f}(\xi)|^{p'} \, \d{\xi}
		\leq 2^{p'} \int_{-\infty}^{\infty} \abs{\sin(2 \pi \xi h)}^{p'} \, |\FT{f}(\xi)|^{p'} \, \d{\xi}
		&= \normIn{f(\cdot+h)-f(\cdot-h)}{L^{p'}(\bR)}^{p'} \\
		&\leq \normIn{f(\cdot + h) - f(\cdot - h)}{L^p(\bR)}^{p'} \\
		&= O(h^{\alpha p'}).
	\end{align*}
	Hence, setting $X := \frac{1}{4h}$, we found that
	\[\int_{-X}^{X} \abs{\xi}^{p'} \, |\FT{f}(\xi)|^{p'} \, \d{\xi} = O(X^{(1-\alpha)p'}).\]
	Using H\"older's inequality with exponents $\frac{p'}{\beta}$ and $1/(1 - \frac{\beta}{p'})$ where $1 \leq \beta < p'$, we obtain that
	\[\int_{-X}^{X} \abs{\xi}^{\beta} |\FT{f}(\xi)|^{\beta} \, \d{\xi} \leq \left( \int_{-X}^{X} \abs{\xi}^{p'} |\FT{f}(\xi)|^{p'} \, \d{\xi} \right)^{\frac{\beta}{p'}} \left( \int_{-X}^{X} \d{\xi} \right)^{1 - \frac{\beta}{p'}} = O\left(X^{(1-\alpha)\beta + 1 - \frac{\beta}{p'}}\right).\]
	By \cref{lemma:DurenCont}, this is equivalent with
	\[\left( \int_{-\infty}^{-X} + \int_{X}^{\infty} \right) |\FT{f}(\xi)|^{\beta} \, \d{\xi} = O\left( X^{1 - \alpha \beta - \frac{\beta}{p'}} \right)\]
	under the condition that $0 < (1 - \alpha) \beta + 1 - \frac{\beta}{p'} < \beta$. The left inequality is satisfied as $0 < \alpha \leq 1$ and $1 \leq \beta < p'$, and the right one can be rewritten as
	\[\frac{p}{p + \alpha p - 1} = \frac{1}{\alpha + \frac{1}{p'}} < \beta.\]
	Note that under this condition we have exactly convergence of the integral $\int_{-\infty}^{\infty} |\FT{f}(\xi)|^{\beta} \, \d{\xi}$. The case $\beta = p'$ follows from the Hausdorff-Young inequality, completing the proof.
\end{proof}

A natural extension of the original first Titchmarsh theorem for H\"older-Lipschitz functions on $\bR$ consists of considering functions in several variables. This has been investigated, for example, in \cite[Theorem 2.15]{LipThesis}. However, the proof there contains the same inaccuracy as the proof of the original Titchmarsh theorem. We will give a clear proof again via a continuous version of Duren's lemma, which is adapted to several variables this time.

\begin{lemma}[Multidimensional continuous Duren-type lemma, additive case]\label{lemma:DurenContAdd}
	Let $d \in \bN \setminus \{0\}$. Consider a non-negative function $f \in L^1(\bR^d)$, and let $0 < a < b$. The estimate
	\[\int_{\abs{x} \leq X} \abs{x}^b \, f(x) \, \d{x} = O(X^{a})\]
	is equivalent with
	\[\int_{\abs{x} > X} f(x) \, \d{x} = O(X^{a-b}).\]
\end{lemma}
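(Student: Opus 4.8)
The plan is to reduce the $d$-dimensional claim to the one-dimensional \cref{lemma:DurenCont} by passing to spherical coordinates. First I would introduce, for $r > 0$, the radial integral
\[
	F(r) := r^{d-1} \int_{S^{d-1}} f(r\omega) \, \d{\omega},
\]
which is a non-negative measurable function of $r$. By Tonelli's theorem and the spherical coordinate formula, $F \in L^{1}[0,\infty)$ with $\int_{0}^{\infty} F(r) \, \d{r} = \normIn{f}{L^{1}(\bR^{d})}$, and for every non-negative measurable $h \colon [0,\infty) \to [0,\infty)$ one has $\int_{\bR^{d}} h(\abs{x}) \, f(x) \, \d{x} = \int_{0}^{\infty} h(r) \, F(r) \, \d{r}$.

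Next I would apply this identity with the two weights appearing in the statement. Taking $h(r) = r^{b}$ on $[0,X]$ and zero elsewhere, and then $h = \mathbf{1}_{(X,\infty)}$, gives
\[
	\int_{\abs{x} \leq X} \abs{x}^{b} \, f(x) \, \d{x} = \int_{0}^{X} r^{b} \, F(r) \, \d{r}
	\qquad \text{and} \qquad
	\int_{\abs{x} > X} f(x) \, \d{x} = \int_{X}^{\infty} F(r) \, \d{r}.
\]
Hence the asserted equivalence is precisely the equivalence of \eqref{eq:DurenCont1} and \eqref{eq:DurenCont2} from \cref{lemma:DurenCont}, applied to the function $F \in L^{1}[0,\infty)$ with the same exponents $0 < a < b$. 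Invoking that lemma then completes the argument.

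Since the whole proof is a change of variables followed by a citation of the one-dimensional lemma, there is no genuinely hard step. The only point that needs a word of justification is that $F$ is well defined for almost every $r$ and belongs to $L^{1}[0,\infty)$, which follows from the non-negativity of $f$ together with Tonelli's theorem; alternatively, one may carry out the reduction through the coarea formula if one prefers to avoid spherical coordinates explicitly.
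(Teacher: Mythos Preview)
Your proposal is correct and follows essentially the same approach as the paper: both reduce to the one-dimensional \cref{lemma:DurenCont} via spherical coordinates, applying it to the radial function $r \mapsto r^{d-1}\int_{S^{d-1}} f(r\omega)\,\d\omega$. Your version is in fact slightly more careful in explicitly noting that this radial function lies in $L^1[0,\infty)$, which is a hypothesis of \cref{lemma:DurenCont}.
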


\begin{proof}
	Via spherical coordinates we have that
	\begin{equation}\label{eq:DurenContAdd1}
		\int_{0}^{X} r^{b} \left( \int_{S_r} r^{d-1} \, f(r \theta) \, d{\theta} \right) \, \d{r} = \int_{\abs{x} \leq X} \abs{x}^b \, f(x) \, \d{x} = O(X^a),
	\end{equation}
	where $S_r := \{x \in \bR^n : \abs{x} = r\}$ for $r > 0$ is the sphere centered at the origin of radius $r$. Via \cref{lemma:DurenCont} we then find that \eqref{eq:DurenContAdd1} is equivalent with
	\[\int_{\abs{x} > X} f(x) \, \d{x} = \int_{X}^{\infty} \left( \int_{S_r} r^{d-1} \, f(r \theta) \, d{\theta} \right) \, \d{r} = O(X^{a-b}),\]
	which is what we had to prove.
\end{proof}

We can now proceed to a proof of the first Titchmarsh theorem for H\"older-Lipschitz functions in $\bR^d$ via the former multidimensional continuous version of Duren's lemma. 

\begin{theorem}[{\cite[Theorem 2.15]{LipThesis}}]\label{th:firstTitchRdAdd}
	Let $f \in L^p(\bR^d)$ for some $1 < p \leq 2$, and suppose that
	\[\normIn{f(x_1 + h_1, \dots, x_d + h_d) - f(x_1, \dots, x_d)}{L^p(\bR^d)} = O\left( h_1^{\alpha_1} + \dots + h_d^{\alpha_d} \right) \quad \text{as} \quad h \to 0^{+}\]
	for some $0 < \alpha_1, \dots, \alpha_d \leq 1$, where $h \to 0^{+}$ means that $h_j \to 0^{+}$ for all $j$. Then $\FT{f} \in L^{\beta}(\bR^d)$ for
	\[\frac{p}{p + \frac{\alpha}{d} p - 1} < \beta \leq \frac{p}{p-1},\]
	where $\alpha := \min_{1 \leq j \leq d} \alpha_j$.
\end{theorem}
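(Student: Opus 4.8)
The plan is to mimic the one-dimensional argument, replacing \cref{lemma:DurenCont} by its multidimensional additive counterpart \cref{lemma:DurenContAdd}. First I would compute the Fourier transform of the difference $f(\cdot + h) - f(\cdot)$: a direct calculation gives
\[
\FT{f(\cdot + h) - f(\cdot)}(\xi) = \bigl(e^{2\pi i \xi \cdot h} - 1\bigr)\FT{f}(\xi),
\]
and $\abs{e^{2\pi i \xi\cdot h} - 1} = 2\abs{\sin(\pi \xi\cdot h)}$. Applying the Hausdorff-Young inequality in $\bR^d$ together with the Lipschitz hypothesis yields
\[
\int_{\bR^d} \abs{\sin(\pi \xi \cdot h)}^{p'} \, |\FT{f}(\xi)|^{p'} \, \d{\xi} \lesssim \bigl(h_1^{\alpha_1} + \dots + h_d^{\alpha_d}\bigr)^{p'} \lesssim h^{\alpha p'}
\]
when all $h_j = h$ are equal and small, where $\alpha = \min_j \alpha_j$; I would want to be a bit careful that $(h_1^{\alpha_1} + \dots + h_d^{\alpha_d})^{p'} = O(h^{\alpha p'})$ as $h \to 0^+$, which holds since each term is $O(h^{\alpha})$.

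The next step is to localize the sine factor from below. Rather than taking $h$ a scalar, the natural choice adapted to the additive setting is to pick $h = (h,\dots,h)$ with a single parameter and restrict to the region where $\abs{\xi \cdot h} = h\abs{\xi_1 + \dots + \xi_d}$ is controlled; however this only controls one linear functional of $\xi$. A cleaner route that matches \cref{lemma:DurenContAdd} is to note that on the ball $\abs{\xi} \leq X$ we can choose $h$ small enough (depending on $X$, say $h \sim 1/X$ up to a dimensional constant) so that $\abs{\pi\xi\cdot h} \leq \pi h \abs{\xi}_1 \leq \pi h \sqrt{d}\,\abs{\xi} \leq \tfrac{1}{2}$, and then Jordan's inequality \eqref{ineq:JordanIneq} gives $\abs{\sin(\pi\xi\cdot h)} \geq 2\abs{\xi\cdot h}$. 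The difficulty is that $\abs{\xi\cdot h}$ can vanish even when $\abs{\xi}$ is large, so a single $h$-direction does not suffice; the fix is to average over $d$ coordinate directions $h^{(k)} = (0,\dots,h,\dots,0)$, giving $\sum_{k=1}^d \abs{\sin(\pi h\xi_k)}^{p'} \geq (2h)^{p'}\sum_k \abs{\xi_k}^{p'} \gtrsim (2h)^{p'}\abs{\xi}^{p'}$ on the ball $\abs{\xi}_\infty \leq \tfrac{1}{4h}$. Summing the $d$ estimates from the previous paragraph (one per coordinate direction, each contributing $O(h^{\alpha_k p'}) = O(h^{\alpha p'})$) then yields, with $X = 1/(4h)$,
\[
\int_{\abs{\xi} \leq X} \abs{\xi}^{p'}\,|\FT{f}(\xi)|^{p'}\,\d{\xi} = O\bigl(X^{(1-\alpha)p'}\bigr).
\]

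From here the argument is parallel to the one-dimensional case. I would apply Hölder's inequality on the ball $\{\abs{\xi}\leq X\}$ with exponents $p'/\beta$ and $(1 - \beta/p')^{-1}$ for $1 \leq \beta < p'$, using that the ball has volume $O(X^d)$, to get
\[
\int_{\abs{\xi}\leq X} \abs{\xi}^{\beta}\,|\FT{f}(\xi)|^{\beta}\,\d{\xi} = O\bigl(X^{(1-\alpha)\beta + d(1 - \beta/p')}\bigr).
\]
Then \cref{lemma:DurenContAdd} with $b = \beta$ and $a = (1-\alpha)\beta + d(1-\beta/p')$ converts this into $\int_{\abs{\xi} > X}|\FT{f}(\xi)|^{\beta}\,\d{\xi} = O(X^{a - \beta})$, valid precisely when $0 < a < b$. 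The left inequality $a > 0$ is automatic since $\alpha \leq 1$ and $\beta < p'$; the right inequality $a < \beta$ rearranges to $\alpha\beta + d\beta/p' > d$, i.e. $\beta > d/(\alpha + d/p') = p/(p + \tfrac{\alpha}{d}p - 1)$, which is exactly the stated lower bound on $\beta$. Combined with the trivial bound on $\{\abs{\xi}\leq X\}$ this gives $\FT{f}\in L^{\beta}(\bR^d)$, and the endpoint $\beta = p'$ is just Hausdorff-Young. The main obstacle I anticipate is precisely the lower bound on the sine factor: ensuring that the localization region is a genuine Euclidean ball (as \cref{lemma:DurenContAdd} demands) rather than a slab, which is what forces the averaging over the $d$ coordinate directions and the resulting factor $\tfrac{\alpha}{d}$ in the exponent.
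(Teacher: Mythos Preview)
Your proposal is correct and follows essentially the same route as the paper: compute the Fourier transform of the difference, apply Hausdorff--Young, sum the resulting estimates over the $d$ coordinate directions $h^{(k)} = h e_k$ (the paper writes $h_i = \tfrac{1}{2X}e_i$) to turn the pointwise lower bound $\abs{\sin(\pi h\xi_k)} \geq 2h\abs{\xi_k}$ into a bound involving $\abs{\xi}^{p'}$ on a Euclidean ball, then H\"older and \cref{lemma:DurenContAdd}. Your exploratory detour through the diagonal choice $h=(h,\dots,h)$ before settling on the coordinate directions is the only difference, and you correctly identify why that choice fails and why the averaging forces the $\alpha/d$ in the final exponent.
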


\begin{proof}
	We directly calculate that
	\[\FT{f(\cdot + h) - f(\cdot)}(\xi) = \left( e^{2 \pi i \xi \cdot h} - 1 \right) \FT{f}(\xi) = 2 i \sin(\pi \xi \cdot h) e^{i \pi \xi \cdot h} \FT{f}(\xi)\]
	so that using the Hausdorff-Young inequality gives
	\[2^{p'} \int_{\abs{\xi} \leq \frac{1}{2\abs{h}}} \abs{\sin(\pi \xi \cdot h)}^{p'} |\FT{f}(\xi)|^{p'} \d\xi \leq \normIn{f(\cdot + h) - f(\cdot)}{L^p(\bR^d)}^{p'} = O(h_1^{\alpha_1 p'} + \dots + h_d^{\alpha_d p'}).\]
	Let $X > 0$ and define $h_i := \frac{1}{2X} e_i$ for $1 \leq i \leq d$. Then Jordan's inequality \eqref{ineq:JordanIneq} gives that
	\[\sum_{i=1}^{d} \abs{\sin(\pi \xi \cdot h_i)}^{p'} \geq \left(\frac{2}{2X}\right)^{p'} \sum_{i=1}^{d} \abs{\xi_i}^{p'} \geq \left(\frac{1}{X}\right)^{p'} d^{\frac{1}{2} - \frac{1}{p'}} \abs{\xi}^{p'}\]
	so that
	\begin{align*}
		\int_{\abs{\xi} \leq X} \abs{\xi}^{p'} \, |\FT{f}(\xi)|^{p'} \, \d\xi
		&\leq X^{p'} d^{\frac{1}{p'} - \frac{1}{2}} \sum_{i=1}^{d} \int_{\abs{\xi} \leq \frac{1}{2\abs{h_i}}} \abs{\sin(\pi \xi \cdot h_i)}^{p'} \, |\FT{f}(\xi)|^{p'} \, \d\xi \\
		&= O\left( X^{(1-\alpha_1)p'} + \dots + X^{(1-\alpha_d)p'} \right) \\
		&= O\left( X^{(1-\alpha)p'} \right).
	\end{align*}
	Applying Hölder's inequality with exponents $\frac{p'}{\beta}$ and $1 / (1 - \frac{\beta}{p'})$ where $1 \leq \beta < p'$ gives that
	\[\int_{\abs{\xi} \leq X} \abs{\xi}^{\beta} \, |\FT{f}(\xi)|^{\beta} \, \d\xi
	\leq \left( \int_{\abs{\xi} \leq X} \abs{\xi}^{p'} \, |\FT{f}(\xi)|^{p'} \, \d\xi \right) \left( \int_{\abs{\xi} \leq X} \d\xi \right)^{1 - \frac{\beta}{p'}} = O\left( X^{(1 - \alpha) \beta + d \left( 1 - \frac{\beta}{p'} \right)} \right).\]
	An application of \cref{lemma:DurenContAdd} gives that
	\[\int_{\abs{\xi} > X} |\FT{f}(\xi)|^{\beta} \, \d\xi = O\left( X^{d - \beta \left( \alpha + \frac{d}{p'} \right)}\right)\]
	under the condition that $0 < (1 - \alpha) \beta + d \left( 1 - \frac{\beta}{p'} \right) < \beta$. Note that the left inequality is satisfied because of the conditions $0 < \alpha \leq 1$ and $1 \leq \beta < p'$, while the right equality can be rewritten as
	\[\frac{d}{\alpha + \frac{d}{p'}} = \frac{p}{p + \frac{\alpha}{d} p - 1} < \beta.\]
	The case $\beta = p'$ follows from the Hausdorff-Young inequality.
\end{proof}

While the former one-dimensional and multidimensional versions of the first Titchmarsh theorems appeared in the literature, the following version of the first Titchmarsh theorem in several dimensions seems to be rarely studied in the case of $\bR^d$. It appears, however, in the toroidal case as studied in \cite{LipThesis} in the context of the second Titchmarsh theorem. For completeness, and since it corresponds to another multidimensional continuous version of Duren's lemma, we will treat it here.

\begin{lemma}[Multidimensional continuous Duren-type lemma, multiplicative case]\label{lemma:DurenContMult}
	Let $d \in \bN \setminus \{0\}$. Consider a non-negative function $f \in L^1(\bR^d)$, and let $0 < a_n < b_n$ for every $1 \leq n \leq d$. Then all estimates of the following form are equivalent:
	\begin{multline*}
		\int_{I_{\varepsilon_{1}}(X_1)} \dots \int_{I_{\varepsilon_{d}}(X_d)} \abs{x_1}^{\varepsilon_{1} b_1} \dots \abs{x_d}^{\varepsilon_{d} b_d} f(x_1, \dots, x_d) \, \d{x_1} \dots \d{x_d} \\
		= O\left( X_{1}^{a_1 - (1 - \varepsilon_{1}) b_1} \dots X_{d}^{a_d - (1 - \varepsilon_{d}) b_d} \right),
	\end{multline*}
	where $\varepsilon_{1}, \dots, \varepsilon_{d} \in \{0,1\}$, $I_{1}(X) := [-X,X]$ and $I_{0}(X) := \bR \setminus [-X,X]$ for $X > 0$.
\end{lemma}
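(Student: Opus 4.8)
The plan is to prove the equivalence of all $2^d$ estimates indexed by $\varepsilon = (\varepsilon_1,\dots,\varepsilon_d) \in \{0,1\}^d$ by showing that each one is equivalent to the single ``base'' estimate obtained at $\varepsilon = (1,1,\dots,1)$, namely
\[
\int_{-X_1}^{X_1} \dots \int_{-X_d}^{X_d} \abs{x_1}^{b_1} \dots \abs{x_d}^{b_d} f(x_1,\dots,x_d) \, \d{x_1} \dots \d{x_d} = O\left( X_1^{a_1 - b_1} \dots X_d^{a_d - b_d} \right).
\]
First I would observe that the one-dimensional \cref{lemma:DurenCont} can be upgraded to a ``parametrised'' version: if $g(\cdot\,;y)$ is a family of non-negative $L^1[0,\infty)$-functions depending measurably on an auxiliary parameter $y$, then the estimate $\int_0^X x^b g(x;y)\,\d{x} = O(X^a \Phi(y))$ holds uniformly in $y$ (for a fixed weight $\Phi \geq 0$ and the constant in the $O$ independent of $y$) if and only if $\int_X^\infty g(x;y)\,\d{x} = O(X^{a-b}\Phi(y))$ uniformly in $y$. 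This is immediate from inspecting the proof of \cref{lemma:DurenCont}, since the constants $\tfrac{b}{b-a}$ and the bound on $\varphi$ produced there are universal; the role of $\Phi(y)$ is just to rescale $f$, and the threshold $X_0$ can be taken independent of $y$ because the hypothesis is uniform. One subtlety: to pass from ``uniform in $y$'' to integrating against $\d y$, I will actually want the version where the auxiliary variable is itself integrated — that is, I apply \cref{lemma:DurenCont} to the function $x_1 \mapsto \int (\text{stuff}) \, \d{x_2}\dots\d{x_d}$ after using Tonelli to pull the $x_1$-integral to the outside, exactly as in the proof of \cref{lemma:DurenContAdd}.

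The core of the argument is then an induction on the number of coordinates in which $\varepsilon_j = 0$. Concretely, fix two tuples $\varepsilon$ and $\varepsilon'$ that differ in exactly one coordinate, say coordinate $n$, with $\varepsilon_n = 1$ and $\varepsilon'_n = 0$. I want to show the $\varepsilon$-estimate is equivalent to the $\varepsilon'$-estimate. By Tonelli I write the $\varepsilon$-integral as an iterated integral with the $x_n$-integral outermost: the inner integral over the remaining coordinates produces, for each fixed $x_n$, a non-negative function $G(x_n; X_1,\dots,\widehat{X_n},\dots,X_d)$, and by evenness of $\abs{x_n}^{b_n}$ the outer integral is $2\int_0^{X_n} x_n^{b_n} G(x_n;\cdot)\,\d{x_n}$. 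The target power of $X_n$ in the $\varepsilon$-estimate is $a_n - (1-1)b_n = a_n$, while in the $\varepsilon'$-estimate (where the $x_n$-range becomes $\bR\setminus[-X_n,X_n]$ and the weight $\abs{x_n}^{b_n}$ disappears) it is $a_n - b_n$; all other factors $X_j^{a_j - (1-\varepsilon_j)b_j}$ are untouched and play the role of the parameter-weight $\Phi$. Applying the parametrised \cref{lemma:DurenCont} in the variable $x_n$, with $(a,b) = (a_n, b_n)$ (legitimate since $0 < a_n < b_n$), converts $\int_0^{X_n} x_n^{b_n} G \, \d{x_n} = O(X_n^{a_n} \Phi)$ into $\int_{X_n}^\infty G \, \d{x_n} = O(X_n^{a_n - b_n}\Phi)$ and back; doubling and reassembling the iterated integral via Tonelli again gives precisely the $\varepsilon'$-estimate. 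Since any two tuples in $\{0,1\}^d$ are connected by a chain of single-coordinate flips, transitivity of ``is equivalent to'' finishes the proof.

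The main obstacle is bookkeeping rather than conceptual: one must be careful that the $O$-constants and the threshold $X_0$ in each application of the one-dimensional lemma depend only on $a_n, b_n$ and the global constant of the hypothesis, and in particular not on the other parameters $X_1,\dots,\widehat{X_n},\dots,X_d$ — otherwise the chained equivalences would accumulate a non-uniform dependence. This is why I prefer to integrate out the auxiliary variables immediately (as in \cref{lemma:DurenContAdd}) rather than carrying a genuine ``uniform in $y$'' statement through the induction: after pulling $x_n$ to the outside by Tonelli, the remaining $(d-1)$-fold integral against $\abs{x_1}^{\varepsilon_1 b_1}\cdots f \, \d{x_1}\cdots\d{x_d}$ (omitting $x_n$) is just a single non-negative $L^1$ function of $x_n$ for each fixed $X_1,\dots,\widehat{X_n},\dots,X_d$, and the weights $X_j^{a_j-(1-\varepsilon_j)b_j}$ factor out as a constant, so \cref{lemma:DurenCont} applies verbatim with $\Phi$ absorbed. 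A secondary point to check is measurability/integrability of the inner functions, which follows from $f \in L^1(\bR^d)$ by Tonelli, so no integrability is lost at any stage.
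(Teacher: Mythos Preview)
Your proposal is correct and follows essentially the same route as the paper: both argue that it suffices to flip one $\varepsilon_j$ at a time, fix the remaining variables $X_1,\dots,\widehat{X_n},\dots,X_d$, absorb the corresponding factor $\prod_{j\neq n} X_j^{a_j-(1-\varepsilon_j)b_j}$ into the normalisation, and then apply the one-dimensional \cref{lemma:DurenCont} to the resulting single-variable integral. One small slip: in your displayed ``base'' estimate at $\varepsilon=(1,\dots,1)$ the right-hand side should read $O\left(X_1^{a_1}\cdots X_d^{a_d}\right)$, not $O\left(X_1^{a_1-b_1}\cdots X_d^{a_d-b_d}\right)$.
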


\begin{proof}
	Note that it suffices to prove this result integral by integral, and without loss of generality we can prove it for the first integral, i.e.,
	\begin{multline}\label{eq:DurenContMult1}
		\int_{-X_1}^{X_1} \int_{I_{\varepsilon_2}(X_2)} \dots \int_{I_{\varepsilon_{d}}(X_d)} \abs{x_1}^{b_1} \abs{x_2}^{\varepsilon_2 b_2} \dots \abs{x_d}^{\varepsilon_{d} b_d} f(x_1, \dots, x_d) \, \d{x_1} \dots \d{x_d} \\
		= O\left( X_{1}^{a_1} X_{2}^{a_2 - (1 - \varepsilon_2) b_2} \dots X_{d}^{a_d - (1 - \varepsilon_{d}) b_d} \right)
	\end{multline}
	is equivalent to
	\begin{multline}\label{eq:DurenContMult2}
		\int_{\abs{x_1} > X_1} \int_{I_{\varepsilon_2}(X_2)} \dots \int_{I_{\varepsilon_{d}}(X_d)} \abs{x_2}^{\varepsilon_2 b_2} \dots \abs{x_d}^{\varepsilon_{d} b_d} f(x_1, \dots, x_d) \, \d{x_1} \dots \d{x_d} \\
		= O\left( X_{1}^{a_1 - b_1} X_{2}^{a_2 - (1 - \varepsilon_2) b_2} \dots X_{d}^{a_d - (1 - \varepsilon_{d}) b_d} \right).
	\end{multline}
	For ease of notation, set
	\[F_{X_2, \dots X_d}(x_1) := \int_{I_{\varepsilon_2}(X_2)} \dots \int_{I_{\varepsilon_{d}}(X_d)} \abs{x_2}^{\varepsilon_2 b_2} \dots \abs{x_d}^{\varepsilon_{d} b_d} f(x_1, \dots, x_d) \, \d{x_2} \dots \d{x_d}.\]
	One can verify in a straightforward way that $F_{X_2, \dots, X_d} \in L^1(\bR)$ for every $X_2, \dots, X_d > 0$ since $f \in L^1(\bR^d)$.
	
	Now, fix sufficiently large $X_2, \dots, X_d > 0$. We need to prove that
	\[\int_{-X_1}^{X_1} x_{1}^{b_1} \frac{F_{X_2, \dots, X_d}(x_1)}{X_{2}^{a_2 - (1 - \varepsilon_2) b_2} \dots X_{d}^{a_d - (1 - \varepsilon_{d}) b_d}} \, \d{x_1} = O\left( X_1^{a_1} \right)\]
	is equivalent to
	\[\int_{\abs{x_1} > X_1} \frac{F_{X_2, \dots, X_d}(x_1)}{X_{2}^{a_2 - (1 - \varepsilon_2) b_2} \dots X_{d}^{a_d - (1 - \varepsilon_{d}) b_d}} \, \d{x_1} = O\left( X_1^{a_1 - b_1} \right),\]
	but this follows immediately from \cref{lemma:DurenCont} because
	\[\frac{F_{X_2, \dots, X_d}}{X_2^{a_2 - (1 - \varepsilon_2) b_2} \dots X_d^{a_d - (1 - \varepsilon_{d}) b_d}} \in L^1(\bR)\]
	as $X_2, \dots, X_d > 0$ were fixed.
\end{proof}

This version of Duren's lemma enables us to treat the following ``multiplicative'' version of the first Titchmarsh theorem in $\bR^d$. We call it multiplicative because the Lipschitz condition corresponding to all individual coordinates gives rise to a product of sine factors in the Fourier space, and the $O$-estimate consists of a product of ``errors'' from each coordinate.

\begin{theorem}\label{th:firstTitchMultRd}
	Let $f \in L^p(\bR^d)$ for some $1 < p \leq 2$. Define the difference operator $\diffOpForw_{j}^{h_{j}}$ for each $1 \leq j \leq d$ with $h_{j} > 0$ by
	\[\diffOpForw_{j}^{h_{j}} f(x) := f(x_1, \dots, x_j + h_j, \dots, x_d) - f(x_1, \dots, x_j, \dots x_d).\]
	Suppose that
	\[\normIn{\diffOpForw_{1}^{h_{1}} \dots \diffOpForw_{d}^{h_{d}} f}{L^p(\bR^d)} = O\left( h_{1}^{\alpha_1} \dots h_{d}^{\alpha_d}\right) \quad \text{as} \quad h \to 0^{+}\]
	for some $0 < \alpha_1, \dots, \alpha_d \leq 1$. Then $\FT{f} \in L^{\beta}(\bR^{d})$ for
	\[\frac{p}{p + \alpha p - 1} < \beta \leq \frac{p}{p-1},\]
	where $\alpha := \min_{1 \leq j \leq d} \alpha_j$.
\end{theorem}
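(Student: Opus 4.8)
The plan is to transcribe, almost verbatim, the proof of \cref{th:firstTitchRdAdd}, with the additive multidimensional Duren lemma replaced by its multiplicative counterpart \cref{lemma:DurenContMult}. First I would Fourier-transform the iterated difference. Since the $\diffOpForw_j^{h_j}$ act on distinct coordinates and $\FT{\diffOpForw_j^{h_j}g}(\xi) = (e^{2\pi i\xi_j h_j}-1)\FT g(\xi)$, one gets
\[
\FT{\diffOpForw_1^{h_1}\cdots\diffOpForw_d^{h_d}f}(\xi) = \prod_{j=1}^{d}\left(e^{2\pi i\xi_j h_j}-1\right)\FT f(\xi) = (2i)^{d}\,e^{i\pi\sum_{j}\xi_j h_j}\prod_{j=1}^{d}\sin(\pi\xi_j h_j)\,\FT f(\xi),
\]
so that $\bigl|\FT{\diffOpForw_1^{h_1}\cdots\diffOpForw_d^{h_d}f}(\xi)\bigr| = 2^{d}\prod_{j}|\sin(\pi\xi_j h_j)|\,|\FT f(\xi)|$. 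The Hausdorff--Young inequality then yields
\[
2^{dp'}\int_{\bR^{d}}\prod_{j=1}^{d}|\sin(\pi\xi_j h_j)|^{p'}\,|\FT f(\xi)|^{p'}\,\d\xi \leq \normIn{\diffOpForw_1^{h_1}\cdots\diffOpForw_d^{h_d}f}{L^{p}(\bR^{d})}^{p'} = O\left(\prod_{j=1}^{d}h_j^{\alpha_j p'}\right).
\]

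Next, given $X_1,\dots,X_d>0$, I would set $h_j := \tfrac{1}{2X_j}$, so that on the box $\prod_j[-X_j,X_j]$ one has $|\xi_j h_j|\leq\tfrac12$ and Jordan's inequality \eqref{ineq:JordanIneq} gives $|\sin(\pi\xi_j h_j)|\geq 2|\xi_j h_j| = |\xi_j|/X_j$ in each coordinate. Restricting the integral to this box and clearing the $X_j$'s leaves $\int_{\prod_j[-X_j,X_j]}\prod_{j}|\xi_j|^{p'}|\FT f(\xi)|^{p'}\,\d\xi = O(\prod_j X_j^{(1-\alpha_j)p'})$, and then H\"older's inequality over the same box, with exponents $p'/\beta$ and $1/(1-\tfrac{\beta}{p'})$ for $1\leq\beta<p'$, produces
\[
\int_{[-X_1,X_1]\times\cdots\times[-X_d,X_d]}\prod_{j=1}^{d}|\xi_j|^{\beta}\,|\FT f(\xi)|^{\beta}\,\d\xi = O\left(\prod_{j=1}^{d}X_j^{(1-\alpha_j)\beta+1-\beta/p'}\right).
\]
(The volume of the box contributes a factor $X_j$ per coordinate, in contrast with the $X^{d}$ coming from the Euclidean ball in \cref{th:firstTitchRdAdd}; this is precisely why the final exponent range below involves $\alpha$ rather than $\alpha/d$.) This is exactly the ``$\varepsilon\equiv1$'' instance of \cref{lemma:DurenContMult} with $b_j=\beta$ and $a_j=(1-\alpha_j)\beta+1-\beta/p'$.

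It then remains to verify the hypothesis $0<a_j<b_j$ of \cref{lemma:DurenContMult} and to read off the range of $\beta$. The bound $a_j>0$ holds for every $j$ since $\alpha_j\leq1$ makes $(1-\alpha_j)\beta\geq0$ while $\beta<p'$ makes $1-\beta/p'>0$; the bound $a_j<\beta$ rearranges to $1<\beta(\alpha_j+1/p')$, i.e. $\beta>\frac{1}{\alpha_j+1/p'}=\frac{p}{p+\alpha_j p-1}$, and since $\alpha=\min_j\alpha_j$ all $d$ of these follow from the standing hypothesis $\beta>\frac{p}{p+\alpha p-1}$. Feeding this into \cref{lemma:DurenContMult} delivers the full family of $2^{d}$ equivalent estimates; iterating the lemma coordinate by coordinate, for every $T\subseteq\{1,\dots,d\}$ one obtains
\[
\int_{\prod_{j\in T}(\bR\setminus[-X_j,X_j])}\ \int_{\prod_{j\notin T}[-X_j,X_j]}\ \prod_{j\notin T}|\xi_j|^{\beta}\,|\FT f(\xi)|^{\beta}\,\d\xi = O\left(\prod_{j\in T}X_j^{a_j-\beta}\prod_{j\notin T}X_j^{a_j}\right),
\]
with all exponents $a_j-\beta$ negative.

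Finally I would assemble the global bound by splitting $\bR^{d}$ into the unit box $[-1,1]^{d}$, the fully exterior region $(\bR\setminus[-1,1])^{d}$, and the $2^{d}-2$ mixed regions in between. On $[-1,1]^{d}$, a set of finite measure, $\FT f\in L^{\beta}$ because $\FT f\in L^{p'}(\bR^{d})$ by Hausdorff--Young and $\beta\leq p'$; on $(\bR\setminus[-1,1])^{d}$ the unweighted ``$T=\{1,\dots,d\}$'' case of the last display with $X_j=1$ gives $\int|\FT f|^{\beta}<\infty$ outright; and the endpoint $\beta=p'$ is Hausdorff--Young itself. The step I expect to be the main obstacle is the treatment of the mixed regions: there \cref{lemma:DurenContMult} only furnishes a \emph{weighted} estimate, with the weight $\prod_{j\notin T}|\xi_j|^{\beta}$ sitting on exactly the coordinates that stay in $[-1,1]$, whereas what is wanted is a bound for $\int|\FT f|^{\beta}$. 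My approach would be to decompose the unbounded directions into dyadic shells $\{2^{k}<|\xi_j|\leq 2^{k+1}\}$ — on which the weight in those directions is essentially constant and the negative exponents $a_j-\beta$ produce summable geometric series — while absorbing the bounded directions into the Hausdorff--Young control on the bounded box; getting this interplay between the unbounded decay and the bounded slab fully under control is the delicate point, and it is where the multiplicative structure of the Lipschitz hypothesis (as opposed to the purely radial geometry of the additive case) has to be exploited with care.
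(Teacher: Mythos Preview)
Your argument tracks the paper's proof step for step through the application of \cref{lemma:DurenContMult}: the Fourier identity for the iterated difference, Jordan's inequality together with Hausdorff--Young giving the weighted box estimate $\int_{\prod_j[-X_j,X_j]}\prod_j|\xi_j|^{p'}|\FT f(\xi)|^{p'}\,\d\xi = O\bigl(\prod_j X_j^{(1-\alpha_j)p'}\bigr)$, the H\"older step down to exponent $\beta$, and the multiplicative Duren lemma producing the fully exterior bound. The paper in fact stops exactly there: it writes ``This completes the proof'' immediately after obtaining $\int_{\prod_j\{|\xi_j|>X_j\}}|\FT f(\xi)|^{\beta}\,\d\xi = O\bigl(\prod_j X_j^{1-\alpha\beta-\beta/p'}\bigr)$ and does not discuss the mixed regions at all. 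So the passage from the exterior estimate to $\FT f\in L^{\beta}(\bR^d)$ that you flag as the main obstacle is not a step the paper handles and you overlook---it is simply absent from the paper's argument as written.

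That concern is legitimate. Unlike the additive case, the complement of $\prod_j\{|\xi_j|>X_j\}$ is unbounded, so Hausdorff--Young plus H\"older over a set of finite measure does not close the argument. The mixed Duren estimates you extract carry the weight $\prod_{j\notin T}|\xi_j|^{\beta}$ on precisely the coordinates confined to $[-1,1]$, where the weight vanishes near the coordinate hyperplanes and therefore gives no upper bound for the unweighted integral; your dyadic-plus-Hausdorff--Young sketch hits the same wall, since the only control that decays along the unbounded directions still drags a weight on the bounded ones, while the unweighted $L^{p'}$ control from Hausdorff--Young does not decay. So as it stands neither your proposal nor the paper supplies a complete bridge from the exterior estimate to global $L^{\beta}$ integrability; this is a loose end in the multiplicative $\bR^{d}$ theorem itself rather than a defect of your reading.
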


\begin{proof}
	After calculating for $X_1, \dots, X_d > 0$ that
	\begin{align*}
		\FT{\diffOpForw_{1}^{\frac{1}{2X_{1}}} \dots \diffOpForw_{d}^{\frac{1}{2X_{d}}} f}(\xi)
		&= \left(e^{2 \pi i \frac{\xi_{1}}{2X_{1}}} - 1\right) \dots \left(e^{2 \pi i \frac{\xi_{d}}{2X_{d}}} - 1\right) \FT{f}(\xi) \\
		&= 2 i \sin\left(\frac{\pi \xi_{1}}{2X_{1}}\right) \, e^{i \pi \frac{\xi_{1}}{2X_{1}}} \dots 2 i \sin\left(\frac{\pi \xi_{d}}{2X_{d}}\right) \, e^{i \pi \frac{\xi_{d}}{2X_{d}}} \FT{f}(\xi),
	\end{align*}
	we can apply Jordan's inequality \eqref{ineq:JordanIneq} and the Hausdorff-Young inequality to get
	\begin{align*}
		2^{d p'} \int_{|\xi_{1}| \leq X_{1}} \dots \int_{|\xi_{d}| \leq X_{d}} \abs{\frac{\xi_{1}}{X_{1}}}^{p'} \dots \abs{\frac{\xi_{d}}{X_{d}}}^{p'} |\FT{f}(\xi)|^{p'} \, \d\xi
		&\leq \normIn{\diffOpForw_{1}^{\frac{1}{2X_{1}}} \dots \diffOpForw_{d}^{\frac{1}{2X_{d}}} f}{L^{p}(\bR^d)}^{p'} \\
		&= O\left(X_{1}^{-\alpha p'} \dots X_{d}^{-\alpha p'}\right).
	\end{align*}
	Thus, we have found the estimate
	\begin{equation}
		\int_{|\xi_{1}| \leq X_{1}} \dots \int_{|\xi_{d}| \leq X_{d}} \big|\xi_{1}\big|^{p'} \dots \big|\xi_{d}\big|^{p'} |\FT{f}(\xi)|^{p'} \, \d\xi
		= O\left(X_{1}^{(1-\alpha)p'} \dots X_{d}^{(1-\alpha)p'}\right).
	\end{equation}
	Using H\"older's inequality with exponents $\frac{p'}{\beta}$ and $1/(1 - \frac{\beta}{p'})$ where $1 \leq \beta < p'$,
	we get
	\begin{multline*}
		\int_{\abs{\xi_{1}} \leq X_1} \dots \int_{\abs{\xi_{d}} \leq X_d} \big|\xi_{1}\big|^{\beta} \dots \big|\xi_{d}\big|^{\beta} |\FT{f}(\xi)|^{\beta} \, \d\xi \\
		\leq \left( \int_{|\xi_{1}| \leq X_{1}} \dots \int_{|\xi_{d}| \leq X_{d}} \big|\xi_{1}\big|^{p'} \dots \big|\xi_{d}\big|^{p'} |\FT{f}(\xi)|^{p'} \, \d\xi \right)^{\frac{\beta}{p'}} \left( \int_{|\xi_{1}| \leq X_{1}} \dots \int_{|\xi_{d}| \leq X_{d}} \d\xi \right)^{1 - \frac{\beta}{p'}} \\
		= O\left(X_1^{(1 - \alpha) \beta + 1 - \frac{\beta}{p'}} \dots X_d^{(1 - \alpha) \beta + 1 - \frac{\beta}{p'}}\right).
	\end{multline*}
	Applying Duren's lemma gives
	\[
	\int_{|\xi_{1}| \geq X_1} \dots \int_{|\xi_{d}| \geq X_{d}} |\FT{f}(\xi)|^{\beta} \, \d\xi = O\left( X_{1}^{1 - \alpha \beta - \frac{\beta}{p'}} \dots X_{d}^{1 - \alpha \beta - \frac{\beta}{p'}} \right)
	\]
	under the condition that $0 < (1 - \alpha) \beta + 1 - \frac{\beta}{p'} < \beta$. Note that the left inequality is satisfied, and the right one is equivalent to
	\[\frac{1}{\alpha + \frac{1}{p'}} = \frac{p}{p + \alpha p - 1} < \beta.\]
	This completes the proof.
\end{proof}

In the rest of this paper, similar proof strategies and tools will be used to treat discrete versions of Duren's lemma and Titchmarsh theorems for H\"older-Lipschitz functions on fundamental domains of lattices in $\bR^d$.

\section{H\"older-Lipschitz functions on fundamental domains of lattices}\label{s:TitchThFundDom}

In this section we will deduce the Titchmarsh theorems for H\"older-Lipschitz functions on fundamental domains of lattices in $\bR^d$. The first Titchmarsh theorem is a generalization of the Hausdorff-Young inequality \cite[Theorem 3.1]{LpLqBoundedness} in the sense that it determines a range of Lebesgue spaces to which the Fourier transform of H\"older-Lipschitz functions belong. The second Titchmarsh theorem characterizes H\"older-Lipschitz functions with exponent \(2\) by the asymptotic behavior of the series for the \(\ell^{2}\)-norm of their Fourier transforms. The latter theorem will also yield a boundedness result for Fourier multipliers on H\"older-Lipschitz spaces, which will be treated in \cref{s:FourierMult}.

The Duren-type lemmas for the proofs of the Titchmarsh theorems in this section are based on the following multidimensional version of Duren's lemma \cite[101]{Duren}.

\begin{lemma}[Duren]\label{lemma:Duren}
	Let \(d \in \bN \setminus \{0\}\). Consider a non-negative sequence \(c_{n_{1}, \dots, n_{d}} \geq 0\), and let \(0 < a_{n} < b_{n}\) for every \(1 \leq n \leq d\). Then all estimates of the following form are equivalent:
	\[\sum_{n_1 \in I_{\varepsilon_{1}}(N_1)} \dots \sum_{n_d \in I_{\varepsilon_{d}}(N_d)} n_{1}^{\varepsilon_{1} b_{1}} \dots n_{d}^{\varepsilon_{d} b_{d}} \, c_{n_{1}, \dots, n_{d}} = O\left(N_{1}^{a_{1}-(1-\varepsilon_{1})b_{1}} \dots N_{d}^{a_{d}-(1-\varepsilon_{d})b_{d}}\right),\]
	where \(\varepsilon_{1}, \dots, \varepsilon_{d} \in \{0,1\}\), \(I_{1}(N) := [1,N]\) and \(I_{0}(N) := \bR \setminus [0,N]\) for \(N \in \bN\).
	In particular, we have
	\[\sum_{n_{1} = 1}^{N_{1}} \dots \sum_{n_{d} = 1}^{N_{d}} n_{1}^{b_{1}} \dots n_{d}^{b_{d}} \, c_{n_{1}, \dots, n_{d}} = O\left(N_{1}^{a_{1}} \dots N_{d}^{a_{d}}\right)\]
	if and only if
	\[\sum_{n_{1} > N_{1}} \dots \sum_{n_{d} > N_{d}} c_{n_{1}, \dots, n_{d}} = O\left(N_{1}^{a_{1}-b_{1}} \dots N_{d}^{a_{d}-b_{d}}\right).\]
\end{lemma}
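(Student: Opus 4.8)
The plan is to reduce the $d$-dimensional statement to the one-dimensional continuous Duren-type lemma (\cref{lemma:DurenCont}), applied one coordinate at a time, exactly as in the proof of \cref{lemma:DurenContMult}. The key observation is that the equivalence claimed is really a chain of equivalences: any two estimates in the family differ by finitely many "coordinate flips" $\varepsilon_j : 0 \leftrightarrow 1$, so it suffices to show that flipping a single coordinate — say the first — yields an equivalent estimate, with all other coordinates held at arbitrary but fixed values of $\varepsilon_2,\dots,\varepsilon_d$ and $N_2,\dots,N_d$. Concretely, I would fix $\varepsilon_2,\dots,\varepsilon_d \in \{0,1\}$ and sufficiently large $N_2,\dots,N_d \in \bN$, and define the one-variable sequence
\[
	c^{(1)}_{n_1} := \sum_{n_2 \in I_{\varepsilon_2}(N_2)} \dots \sum_{n_d \in I_{\varepsilon_d}(N_d)} n_2^{\varepsilon_2 b_2} \dots n_d^{\varepsilon_d b_d} \, c_{n_1, n_2, \dots, n_d},
\]
after dividing through by the fixed factor $N_2^{a_2 - (1-\varepsilon_2)b_2} \dots N_d^{a_d - (1-\varepsilon_d)b_d}$. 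One checks that $(c^{(1)}_{n_1})_{n_1}$ is summable because the original sequence is (only finitely many terms if some $\varepsilon_j = 1$, and a tail of a convergent series otherwise — here one uses that $c_{n_1,\dots,n_d} \geq 0$ and that the full $d$-fold sum over all indices converges, which follows from the hypothesis with all $\varepsilon_j = 0$ and $N_j$ large together with nonnegativity, or can simply be assumed as the natural finiteness condition). Then the estimate with $\varepsilon_1 = 1$ reads $\sum_{n_1=1}^{N_1} n_1^{b_1} c^{(1)}_{n_1} = O(N_1^{a_1})$ and the estimate with $\varepsilon_1 = 0$ reads $\sum_{n_1 > N_1} c^{(1)}_{n_1} = O(N_1^{a_1 - b_1})$, and these are equivalent by the one-dimensional Duren lemma (in its discrete form — or, transporting to the continuous statement via a step function, by \cref{lemma:DurenCont}; the discrete version of Duren's lemma is what is cited from \cite[101]{Duren}).

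Carrying this out coordinate by coordinate: starting from any estimate in the family and any target estimate in the family, I list the coordinates $j$ where $\varepsilon_j$ differs between the two, and flip them one at a time, invoking the single-coordinate equivalence above at each step with the other coordinates frozen at their current values. Since the uniformity of the $O$-constants is preserved through each flip (the one-dimensional lemma gives a constant depending only on $a_1, b_1$, not on the frozen data, once we have divided out the fixed powers of $N_2,\dots,N_d$), after at most $d$ flips we arrive at the target estimate. The "in particular" clause is the special case where the source estimate has all $\varepsilon_j = 1$ and the target has all $\varepsilon_j = 0$.

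The main obstacle — or rather the only point requiring genuine care — is the bookkeeping of the $O$-constants through the iterated application: one must confirm that after freezing $n_2,\dots,n_d$ and dividing by the corresponding fixed powers of $N_2,\dots,N_d$, the hypothesis really does hand us a one-variable estimate of the exact shape required by \cref{lemma:DurenCont}, uniformly, and that re-multiplying by those powers afterwards reproduces the next estimate in the chain with the correct exponents $a_j - (1-\varepsilon_j)b_j$. This is precisely the computation already performed in the proof of \cref{lemma:DurenContMult}, so here I would simply remark that the argument is identical, with the integrals $\int_{I_{\varepsilon}(X)}$ replaced by sums $\sum_{n \in I_{\varepsilon}(N)}$ and \cref{lemma:DurenCont} replaced by its discrete counterpart \cite[101]{Duren}, and that the condition $0 < a_n < b_n$ for each $n$ is exactly what is needed to apply the one-dimensional lemma at every step.
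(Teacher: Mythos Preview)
Your proposal is correct and follows essentially the same route as the paper: reduce to the one-dimensional discrete Duren lemma by freezing all but one coordinate, dividing through by the fixed power of $N_2^{a_2-(1-\varepsilon_2)b_2}\cdots N_d^{a_d-(1-\varepsilon_d)b_d}$, and then iterating coordinate by coordinate. The paper's proof is in fact slightly terser than yours---it writes out only the flip in the first coordinate and then says ``which immediately follows from the one-dimensional case''---so your added remarks on summability and on the uniformity of the $O$-constants (which the paper does not spell out here) are welcome but not a departure in method.
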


\begin{proof}
	The case \(d=1\) is treated in \cite[page 101]{Duren}, and a refined version of this one-dimensional case is treated by \cref{lemma:DurenRefined}. The case \(d = 2\) is worked out in \cite[Lemma 2.5]{LipThesis}, and we will follow an analogous strategy for higher dimensions. 
	
	It suffices to prove this lemma sum by sum, and without loss of generality we can prove it for the first sum, i.e.,
	\begin{multline*}
		\sum_{n_1 = 1}^{N_1} \sum_{n_2 \in I_{\varepsilon_2}(N_2)} \dots \sum_{n_d \in I_{\varepsilon_{d}}(N_d)} n_1^{b_1} n_2^{\varepsilon_2 b_2} \dots n_d^{\varepsilon_{d} b_d} c_{n_1, \dots, n_d} \\
		= O\left( N_1^{a_1} N_2^{a_2 - (1 - \varepsilon_2) b_2} \dots N_d^{a_d - (1 - \varepsilon_{d}) b_d} \right)
	\end{multline*}
	is equivalent to
	\begin{multline*}
		\sum_{n_1 > N_1} \sum_{n_2 \in I_{\varepsilon_2}(N_2)} \dots \sum_{n_d \in I_{\varepsilon_{d}}(N_d)} n_2^{\varepsilon_2 b_2} \dots n_d^{\varepsilon_{d} b_d} c_{n_1, \dots, n_d} \\
		= O\left( N_1^{a_1 - b_1} N_2^{a_2 - (1 - \varepsilon_2) b_2} \dots N_d^{a_d - (1 - \varepsilon_{d}) b_d} \right).
	\end{multline*}
	For ease of notation, let us set
	\[C_{n_1, N_2, \dots, N_d} := \sum_{n_2 \in I_{\varepsilon_2}(N_2)} \dots \sum_{n_d \in I_{\varepsilon_{d}}(N_d)} n_2^{\varepsilon_2 b_2} \dots n_d^{\varepsilon_{d} b_d} c_{n_1, \dots, n_d}.\]
	Fix sufficiently large $N_2, \dots, N_d \in \bN$. We need to prove the equivalence of the estimate
	\[\sum_{n_1 = 1}^{N_1} n_1^{b_1} \frac{C_{n_1, N_2, \dots, N_d}}{N_2^{a_2 - (1 - \varepsilon_2) b_2} \dots N_d^{a_d - (1 - \varepsilon_{d}) b_d}} = O\left( N_1^{a_1} \right)\]
	and
	\[\sum_{n_1 > N_1} \frac{C_{n_1, N_2, \dots, N_d}}{N_2^{a_2 - (1 - \varepsilon_{2}) b_2} \dots N_d^{a_d - (1 - \varepsilon_{d}) b_d}} = O\left( N_1^{a_1 - b_1} \right),\]
	which immediately follows from the one-dimensional case.
\end{proof}

Let us also point out a distinctive feature of working in fundamental domains of lattices. Since the dual group of the fundamental domain, namely the lattice itself, is discrete, we can extend the right bound in the range in the first Titchmarsh theorem to infinity as we have the following embedding between $\ell^p(\dualLat{L})$-spaces.

\begin{proposition}\label{prop:lpEmbedding}
	Let $1 \leq p < q \leq \infty$. Then the continuous embedding $\ell^p(\dualLat{L}) \hookrightarrow \ell^q(\dualLat{L})$ holds.
\end{proposition}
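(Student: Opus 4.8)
The plan is to establish the pointwise norm inequality $\lVert x \rVert_{\ell^{q}(\dualLat{L})} \leq \lVert x \rVert_{\ell^{p}(\dualLat{L})}$ for every sequence $x = (x_{\kappa})_{\kappa \in \dualLat{L}}$, from which it follows immediately that the identity map $x \mapsto x$ is a well-defined bounded linear operator $\ell^{p}(\dualLat{L}) \to \ell^{q}(\dualLat{L})$ of operator norm at most $1$, hence continuous. Since $\dualLat{L}$ is merely a countable set carrying the counting measure, this is nothing but the classical nesting of $\ell^{p}$-spaces, and the argument does not use the lattice structure at all.

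First I would dispose of the case $q = \infty$: if $x \in \ell^{p}(\dualLat{L})$, then for each fixed $\kappa \in \dualLat{L}$ we have $\abs{x_{\kappa}}^{p} \leq \sum_{\mu \in \dualLat{L}} \abs{x_{\mu}}^{p} = \lVert x \rVert_{\ell^{p}(\dualLat{L})}^{p}$, so $\abs{x_{\kappa}} \leq \lVert x \rVert_{\ell^{p}(\dualLat{L})}$, and taking the supremum over $\kappa$ gives $\lVert x \rVert_{\ell^{\infty}(\dualLat{L})} \leq \lVert x \rVert_{\ell^{p}(\dualLat{L})}$. For $p < q < \infty$ I would use a homogeneity trick: the inequality is trivial for $x = 0$, and otherwise we may rescale so that $\lVert x \rVert_{\ell^{p}(\dualLat{L})} = 1$. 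Then $\abs{x_{\kappa}} \leq 1$ for every $\kappa$, and since $q > p$ this forces $\abs{x_{\kappa}}^{q} \leq \abs{x_{\kappa}}^{p}$. Summing over $\kappa \in \dualLat{L}$ yields $\lVert x \rVert_{\ell^{q}(\dualLat{L})}^{q} \leq \lVert x \rVert_{\ell^{p}(\dualLat{L})}^{p} = 1$, hence $\lVert x \rVert_{\ell^{q}(\dualLat{L})} \leq 1 = \lVert x \rVert_{\ell^{p}(\dualLat{L})}$, and undoing the rescaling gives the claim for general $x$.

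There is no genuine obstacle here; the only point requiring a moment's care is the reduction to the normalized case, which relies on the positive homogeneity of both norms. One may alternatively avoid normalization altogether and argue directly that $\abs{x_{\kappa}}^{q} = \abs{x_{\kappa}}^{p} \cdot \abs{x_{\kappa}}^{q-p} \leq \abs{x_{\kappa}}^{p} \, \lVert x \rVert_{\ell^{p}(\dualLat{L})}^{q-p}$, using the case $q = \infty$ bound on the factor $\abs{x_{\kappa}}^{q-p}$, and then sum over $\kappa$; this produces the same constant. Finally I would note that the inclusion is injective, so that $\ell^{p}(\dualLat{L})$ is genuinely a (not closed, but continuously embedded) subspace of $\ell^{q}(\dualLat{L})$, which justifies writing $\hookrightarrow$ and, in particular, allows extending the upper bound on the exponent in the first Titchmarsh theorem all the way to $\infty$.
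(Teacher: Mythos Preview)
Your proof is correct and essentially coincides with the paper's argument: the paper writes $\lvert f(\kappa)\rvert^{q}=\lvert f(\kappa)\rvert^{p}\lvert f(\kappa)\rvert^{q-p}\le \lVert f\rVert_{\ell^{\infty}}^{\,q-p}\lVert f\rVert_{\ell^{p}}^{\,p}$, which is exactly the ``alternative'' you spell out, while your normalization variant is just a cosmetic repackaging of the same inequality. You are in fact slightly more careful, since you justify the step $\lVert x\rVert_{\ell^{\infty}}\le\lVert x\rVert_{\ell^{p}}$ that the paper simply asserts.
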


\begin{proof}
	Suppose $f \in \ell^p(\dualLat{L})$. Note that $f \in \ell^{\infty}(\dualLat{L})$. Then we find that
	\[
	\normIn{f}{\ell^q(\dualLat{L})}^q = \sum_{\kappa \in \dualLat{L}} \abs{f(\kappa)}^q
	= \sum_{\kappa \in \dualLat{L}} \abs{f(\kappa)}^p \abs{f(\kappa)}^{q-p}
	\leq \normIn{f}{\ell^{\infty}(\dualLat{L})}^{q-p} \normIn{f}{\ell^p(\dualLat{L})}^p,
	\]
	which proves our claim.
\end{proof}

\begin{remark}\label{remark:lpEmbedding}
	When we want to prove that a function $f : \dualLat{L} \to \bC$ belongs to $\ell^{\beta}(\dualLat{L})$ for all $\beta$ in a certain interval, it is sufficient to prove that it belongs to the left endpoint of the interval if the interval contains such a point, or else for $\beta$ arbitrarily close to the left endpoint of the interval, as \cref{prop:lpEmbedding} ensures that $f$ belongs to $\ell^{\beta}(\dualLat{L})$ for all larger $\beta$.
\end{remark}

\subsection{Additive case}

Let us introduce the additive H\"older-Lipschitz spaces $\HoldLipSpaceFD{\alpha}{p}$. After deriving a suitable Duren-type lemma for lattices, we then state and prove the first Titchmarsh theorem for these function spaces. Note that our proof is similar to the one of \cref{th:firstTitchRdAdd}.

\begin{definition}
	Let \(\alpha \in \bR\) with \(0 < \alpha \leq 1\), and let \(1 < p \leq \infty\). We define the (additive) \emph{H\"older-Lipschitz space} \(\HoldLipSpaceFD{\alpha}{p}\) by
	\[\HoldLipSpaceFD{\alpha}{p} := \left\{f \in L^{p}(\Omega) : \normIn{f(\cdot+h)-f(\cdot)}{L^{p}(\Omega)} = O\left(\abs{h}^{\alpha}\right) \text{ as } h \to 0\right\}.\]
\end{definition}

\begin{remark}
	Note that in the previous definition \(0\) may not be in \(\Omega\), but then we mean that \(h\) approaches the representative of the coset \(0 + L\) in \(\Omega\). Alternatively, if we extend the function periodically, then it makes sense to consider \(h \to 0\).
\end{remark}

\begin{remark}
	Some authors prefer another definition of the H\"older-Lipschitz spaces, namely for the parameters \(0 < \alpha_{1}, \dots, \alpha_{d} \leq 1\) and \(1 < p \leq \infty\) the (additive) H\"older-Lipschitz space is given by
	\[\HoldLipSpaceFD{\alpha_{1},\dots,\alpha_{d}}{p} := \left\{f \in L^{p}(\Omega) : \normIn{f(\cdot+h)-f(\cdot)}{L^{p}(\Omega)} = O\left(\sum_{j = 1}^{d} \abs{h_{j}}^{\alpha_{j}}\right) \text{ as } h \to 0\right\}.\]
	Remark that indeed \(\HoldLipSpaceFD{\alpha_{1},\dots,\alpha_{d}}{p} \subseteq \HoldLipSpaceFD{\alpha_{\textnormal{min}}}{p}\) with \(\alpha_{\textnormal{min}} := \min_{1 \leq j \leq d} \alpha_{j}\). Consequently, all of our results are also valid for these other H\"older-Lipschitz spaces. Actually, we would not be able to derive stronger results for the additive H\"older-Lipschitz spaces with the methods presented in this paper as suggested by \cref{th:firstTitchRdAdd}, so this inclusion is rather a simplification of notation than a restriction. On the other hand, for every \(0 < \alpha \leq 1\) we have \(\HoldLipSpaceFD{\alpha}{p} \subseteq \HoldLipSpaceFD{\alpha, \dots, \alpha}{p}\). In fact, \(\HoldLipSpaceFD{\alpha}{p} = \HoldLipSpaceFD{\alpha, \dots, \alpha}{p}\).
\end{remark}

We now derive a Duren-type lemma for lattices suited to additive H\"older-Lipschitz functions from the one-dimensional version of \cref{lemma:Duren}.

\begin{lemma}[Duren-type lemma for lattices, additive case]\label{lemma:DurenLatAdd}
	Let $d \in \bN \setminus \{0\}$, and let $L$ be a lattice in $\bR^d$. Consider a non-negative sequence $c_{\lambda} \geq 0$ with $\lambda \in L$, and let $0 < a < b$. Then the estimate
	\begin{equation}\label{eq:DurenLattice1}
		\sum_{\abs{\lambda} \leq N} \abs{\lambda}^b c_{\lambda} = O(N^a)
	\end{equation}
	is equivalent to
	\begin{equation}\label{eq:DurenLattice2}
		\sum_{\abs{\lambda} > N} c_{\lambda} = O(N^{a-b}).
	\end{equation}
\end{lemma}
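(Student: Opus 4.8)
The statement is a lattice analogue of the multidimensional continuous Duren lemma (\cref{lemma:DurenContAdd}), and the natural strategy is to mimic that proof: reduce the $d$-dimensional lattice sum to a one-dimensional sum over radii by grouping lattice points into spherical shells, and then apply the one-dimensional case of \cref{lemma:Duren}. Concretely, for $r \geq 0$ set
\[
\rho(r) := \sum_{\substack{\lambda \in L \\ \abs{\lambda} = r}} c_{\lambda},
\]
which is a non-negative sequence indexed by the (countable, discrete) set $R_L := \{\, \abs{\lambda} : \lambda \in L \,\}$ of lengths of lattice vectors. Then \eqref{eq:DurenLattice1} becomes $\sum_{r \in R_L,\, r \leq N} r^{b}\,\rho(r) = O(N^{a})$ and \eqref{eq:DurenLattice2} becomes $\sum_{r \in R_L,\, r > N} \rho(r) = O(N^{a-b})$, so it suffices to prove the equivalence of these two estimates for an arbitrary non-negative sequence supported on $R_L$.

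First I would handle the direction \eqref{eq:DurenLattice1} $\Rightarrow$ \eqref{eq:DurenLattice2}. Let $\Phi(N) := \sum_{r \in R_L,\, r \leq N} r^{b} \rho(r) \lesssim N^{a}$. For $N < M$ in $R_L$, summation by parts (the discrete analogue of the integration-by-parts step in \cref{lemma:DurenCont}) gives
\begin{align*}
\sum_{\substack{r \in R_L \\ N < r \leq M}} \rho(r)
&= \sum_{\substack{r \in R_L \\ N < r \leq M}} r^{-b}\bigl(\Phi(r) - \Phi(r^{-})\bigr),
\end{align*}
where $\Phi(r^{-})$ denotes the value of $\Phi$ just below $r$; rearranging the telescoping sum and using $r^{-b} \leq (r')^{-b}$ whenever $r' \leq r$ bounds this by $M^{-b}\Phi(M) + \sum_{N < r \leq M} (\Phi(r) \cdot (\text{difference of } r^{-b}))$, which is $\lesssim M^{a-b} + \int_{N}^{M} t^{a-b-1}\,\d t \lesssim M^{a-b} + N^{a-b}$ since $a < b$. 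Letting $M \to \infty$ (the left side increases to $\sum_{r > N}\rho(r)$, and $M^{a-b} \to 0$) yields \eqref{eq:DurenLattice2}. The converse direction is easier and again parallels \cref{lemma:DurenCont}: writing $\Psi(N) := \sum_{r \in R_L,\, r > N}\rho(r) \lesssim N^{a-b}$ for $N$ large, another summation by parts turns $\sum_{r \leq N} r^{b}\rho(r)$ into a boundary term $\lesssim N^{b}\cdot N^{a-b} = N^{a}$ plus a sum $\lesssim \sum_{r \leq N}(\text{difference of } r^{b})\,\Psi(r) \lesssim \int_1^N t^{b-1}\cdot t^{a-b}\,\d t = O(N^{a})$, after absorbing the finitely many small-$r$ terms into the constant.

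Rather than redo the summation-by-parts bookkeeping from scratch, a cleaner route is to invoke the one-dimensional case of \cref{lemma:Duren} directly: index $R_L = \{r_1 < r_2 < \cdots\}$ and define $c_n := \rho(r_n)$; however, \cref{lemma:Duren} as stated sums against $n^{b}$, not $r_n^{b}$, so a direct black-box application does not quite fit. The honest fix — and the step I expect to be the main technical point — is to prove the one-dimensional statement in the form I actually need: \emph{for any non-negative function $c$ on a discrete subset $R \subseteq [0,\infty)$ with no accumulation point in $[0,\infty)$, $\sum_{r \in R,\, r \leq N} r^{b} c(r) = O(N^{a})$ iff $\sum_{r \in R,\, r > N} c(r) = O(N^{a-b})$}. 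This is exactly the summation-by-parts argument sketched above, and it is the discrete mirror of \cref{lemma:DurenCont}; discreteness of $R$ causes no difficulty because the key monotonicity ($t \mapsto t^{-b}$ decreasing, $t \mapsto t^{b}$ increasing) and the comparison of sums with integrals go through verbatim. Everything else — the passage from $d$ dimensions to one via spherical shells, and the observation that $R_L$ is discrete because $L$ is a lattice (only finitely many lattice points in any ball) — is routine, and I would state it briefly and refer back to the structure of the proof of \cref{lemma:DurenContAdd}.
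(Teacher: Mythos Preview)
Your approach is correct and shares the paper's overall strategy --- reduce the lattice sum to a one-dimensional sum over radial shells and invoke the one-dimensional Duren lemma --- but the paper executes the reduction more efficiently. Instead of grouping lattice points by their \emph{exact} length (which, as you correctly observed, yields an index set $R_L \neq \bN$ and forces you to reprove a generalised Duren lemma via summation by parts), the paper groups into \emph{integer-width annuli}: define, e.g., $d_n := \sum_{n < \abs{\lambda} \leq n+1} c_{\lambda}$ for $n \in \bN$. Because $n \leq \abs{\lambda} \leq n+1$ on each shell, one has the elementary monotonicity inequalities
\[
\sum_{n=1}^{N} n^{b} d_n \leq \sum_{\abs{\lambda} \leq N+1} \abs{\lambda}^{b} c_{\lambda}
\qquad\text{and}\qquad
\sum_{\abs{\lambda} \leq N} \abs{\lambda}^{b} c_{\lambda} \leq \sum_{n=1}^{N} n^{b}\!\! \sum_{n-1 < \abs{\lambda} \leq n} c_{\lambda},
\]
which let the one-dimensional \cref{lemma:Duren} apply as a black box, with no modification and no summation-by-parts bookkeeping. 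Your route works and is conceptually identical, but the integer-annulus trick sidesteps precisely the ``general discrete subset'' version of Duren that you identified as the main technical point.
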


\begin{proof}
	Firstly, suppose that \eqref{eq:DurenLattice1} holds. Let $N \in \bN \setminus \{0\}$. In this case we have that
	\[\sum_{n=1}^N n^b \left( \sum_{n < \abs{\lambda} \leq n+1} c_{\lambda} \right)
	< \sum_{n=0}^N \sum_{n < \abs{\lambda} \leq n+1} \abs{\lambda}^b c_{\lambda}
	\leq \sum_{\abs{\lambda} \leq N+1} \abs{\lambda}^b c_{\lambda}
	= O(N^a).\]
	It follows from Duren's lemma \ref{lemma:Duren} that
	\[\sum_{\abs{\lambda} > N} c_{\lambda}
	= \sum_{n \geq N} \left( \sum_{n < \abs{\lambda} \leq n+1} c_{\lambda} \right)
	= O(N^{a-b}).\]
	
	Conversely, if \eqref{eq:DurenLattice2} holds, we have that
	\[\sum_{n > N} \left( \sum_{n-1 < \abs{\lambda} \leq n} c_{\lambda} \right)
	= \sum_{\abs{\lambda} > N} c_{\lambda}
	= O(N^{a-b})\]
	so that Duren's lemma \ref{lemma:Duren} implies that
	\[\sum_{\abs{\lambda} \leq N} \abs{\lambda}^{b} c_{\lambda}
	\leq \sum_{n=1}^{N} n^{b} \left( \sum_{n-1 < \abs{\lambda} \leq n} c_{\lambda} \right)
	= O(N^a),\]
	proving the lemma.
\end{proof}

With the help of this version of Duren's lemma for lattices, we are now prepared to prove the first Titchmarsh theorem for the additive H\"older-Lipschitz functions.

\begin{theorem}\label{th:firstTitchmarsh}
	If \(f \in \HoldLipSpaceFD{\alpha}{p}\) for some \(0 < \alpha \leq 1\) and \(1 < p \leq 2\), then $\widehat{f} \in \ell^{\gamma}(\dualLat{L})$ for
	\[\frac{p}{p + \frac{\alpha}{d} p - 1} < \gamma \leq \infty.\]
\end{theorem}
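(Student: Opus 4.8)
The plan is to mirror the proof of \cref{th:firstTitchRdAdd}, but with integrals replaced by sums over the dual lattice $\dualLat{L}$ and \cref{lemma:DurenCont}/\cref{lemma:DurenContAdd} replaced by the lattice version \cref{lemma:DurenLatAdd}. First I would compute, for $h > 0$, the Fourier coefficients of the difference: $\cF_{\Omega}(f(\cdot+h)-f(\cdot))(\kappa) = (e^{2\pi i \kappa\cdot h}-1)\FT{f}(\kappa) = 2i\sin(\pi\kappa\cdot h)\,e^{i\pi\kappa\cdot h}\FT{f}(\kappa)$. Applying the Hausdorff-Young inequality \eqref{ineq:HY} with exponent $p'$ to $f(\cdot+h)-f(\cdot)$ and using the Lipschitz hypothesis gives
\[
2^{p'}\sum_{\kappa\in\dualLat{L}}\abs{\sin(\pi\kappa\cdot h)}^{p'}\,\abs{\FT{f}(\kappa)}^{p'} \;=\; \normIn{f(\cdot+h)-f(\cdot)}{\ell^{p'}(\dualLat{L})}^{p'}\cdot(\text{const}) \;\leq\; \normIn{f(\cdot+h)-f(\cdot)}{L^p(\Omega)}^{p'} \;=\; O(\abs{h}^{\alpha p'}).
\]
(Here I would be careful to track the normalising factor $1/\abs{\Omega}$ in $\cF_\Omega$, which only contributes a harmless constant.)

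Next, as in \cref{th:firstTitchRdAdd}, I would choose $h$ in the coordinate directions: for $N>0$ set $h_i := \tfrac{1}{2N}e_i$ for $1\leq i\leq d$. For $\abs{\kappa}\leq N$ one has $\abs{\tfrac{\kappa_i}{2N}}\leq\tfrac12$, so Jordan's inequality \eqref{ineq:JordanIneq} applies and yields $\sum_{i=1}^d\abs{\sin(\pi\kappa\cdot h_i)}^{p'} \gtrsim N^{-p'}\abs{\kappa}^{p'}$ (using equivalence of the $\ell^{p'}$ and $\ell^2$ norms on $\bR^d$, with a dimensional constant). Summing the Hausdorff-Young estimate over $i=1,\dots,d$ then gives
\[
\sum_{\abs{\kappa}\leq N}\abs{\kappa}^{p'}\,\abs{\FT{f}(\kappa)}^{p'} \;=\; O\!\left(N^{(1-\alpha)p'}\right).
\]
One subtlety relative to the $\bR^d$ proof: I should note why $\FT{f}\in\ell^\infty(\dualLat{L})$ is automatic (it follows from $f\in L^p(\Omega)\subseteq L^1(\Omega)$), so the sum over $\abs{\kappa}\leq N$ is finite and the manipulations are legitimate; the possibly empty/small-radius terms cause no issue.

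Then comes the Hölder step: for $1\leq\beta<p'$, apply Hölder's inequality on the finite index set $\{\abs{\kappa}\leq N\}$ with exponents $p'/\beta$ and $(1-\beta/p')^{-1}$. Since $\#\{\kappa\in\dualLat{L}:\abs{\kappa}\leq N\} = O(N^d)$ (a standard lattice-point count), this gives
\[
\sum_{\abs{\kappa}\leq N}\abs{\kappa}^{\beta}\,\abs{\FT{f}(\kappa)}^{\beta} \;=\; O\!\left(N^{(1-\alpha)\beta + d(1-\frac{\beta}{p'})}\right).
\]
Now apply \cref{lemma:DurenLatAdd} to the sequence $c_\kappa := \abs{\FT{f}(\kappa)}^\beta$ with $b = \beta$ and $a = (1-\alpha)\beta + d(1-\beta/p')$; the hypothesis $0<a<b$ reduces, exactly as in \cref{th:firstTitchRdAdd}, to $\frac{d}{\alpha+d/p'} = \frac{p}{p+\frac{\alpha}{d}p-1} < \beta$ (the lower bound $a>0$ being automatic from $0<\alpha\leq1$, $\beta<p'$). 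The conclusion is $\sum_{\abs{\kappa}>N}\abs{\FT{f}(\kappa)}^\beta = O(N^{a-b})$, and in particular $\sum_{\kappa}\abs{\FT{f}(\kappa)}^\beta < \infty$, i.e.\ $\FT{f}\in\ell^\beta(\dualLat{L})$ for every such $\beta$. Finally, by \cref{remark:lpEmbedding} (or \cref{prop:lpEmbedding} directly), $\FT{f}\in\ell^\gamma(\dualLat{L})$ for all $\gamma > \frac{p}{p+\frac{\alpha}{d}p-1}$, including $\gamma=\infty$; since the threshold is not attained we get the open lower endpoint, matching the stated range $\frac{p}{p+\frac{\alpha}{d}p-1} < \gamma \leq \infty$.

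The main obstacle, though largely bookkeeping, is handling the lattice discreteness cleanly at two points: establishing the lattice-point count $\#\{\abs{\kappa}\leq N\} = O(N^d)$ that replaces the volume $\int_{\abs{\xi}\leq X}\d\xi = O(X^d)$ in the Hölder step, and ensuring \cref{lemma:DurenLatAdd} is applied with the correct identification of exponents (it already packages the delicate ``estimate on a bounded region $\Rightarrow$ estimate on the complement'' passage, so the erroneous global-$O$ reasoning flagged in \cref{s:TitchRd} is avoided). Everything else is a direct transcription of the Euclidean argument, with $\cF_\Omega$'s normalisation and Jordan's inequality contributing only explicit constants.
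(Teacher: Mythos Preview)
Your proposal is correct and follows essentially the same route as the paper's proof: compute the Fourier coefficients of the difference, apply Hausdorff--Young, specialise $h$ to coordinate directions and use Jordan's inequality to obtain \eqref{eq:FTTA2}, then H\"older with the lattice-point count $\sum_{\abs{\kappa}\leq N}1=O(N^d)$, apply \cref{lemma:DurenLatAdd}, and extend the range via \cref{remark:lpEmbedding}. Aside from a minor notational slip in your Hausdorff--Young display (the $\ell^{p'}$ norm should be of $\cF_\Omega(f(\cdot+h)-f(\cdot))$, not of $f(\cdot+h)-f(\cdot)$ itself), the argument matches the paper's step for step.
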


\begin{proof}
	Let \(f \in \HoldLipSpaceFD{\alpha}{p}\). Note that for every \(h \in \bR^{d}\) we have
	\begin{align*}
		\FT{f(\cdot+h) - f(\cdot)}(\kappa)
		= \left(e^{2 \pi i \kappa \cdot h} - 1\right) \FT{f}(\kappa)
		&= e^{i \pi \kappa \cdot h} \left( e^{i \pi \kappa \cdot h} - e^{- i \pi \kappa \cdot h} \right) \FT{f}(\kappa) \\
		&= 2 i \sin\left(\pi \kappa \cdot h\right) \, e^{i \pi \kappa \cdot h} \FT{f}(\kappa).
	\end{align*}
	Using the Hausdorff-Young inequality \eqref{ineq:HY} and the assumption that $f \in \HoldLipSpaceFD{\alpha}{p}$, this leads to the following estimate:
	\begin{align*}
		\sum_{\abs{\kappa} \leq \frac{1}{2\abs{h}}} 2^{p'} |\sin(\pi \kappa \cdot h)|^{p'} |\FT{f}(\kappa)|^{p'}
		&\leq \sum_{\kappa \in \dualLat{L}}  2^{p'} |\sin(\pi \kappa \cdot h)|^{p'} |\FT{f}(\kappa)|^{p'} \\
		&\leq \normIn{f(\cdot + h) - f(\cdot)}{L^p(\Omega)}^{p'} \\
		&= O(\abs{h}^{\alpha p'}).
	\end{align*}
	Now we need to estimate $|\sin{(\pi \kappa \cdot h)}|$ from below. Notice that the condition $\abs{\kappa} \leq \frac{1}{2\abs{h}}$ allows us to apply Jordan's inequality \eqref{ineq:JordanIneq}, which gives the inequality
	$$2 |\kappa \cdot h| \leq |\sin{(\pi \kappa \cdot h)}| \leq  \pi |\kappa \cdot h|.$$
	Let $N \in \bN \setminus \{0\}$ and define $h_i := \frac{1}{2N} e_i$ for $1 \leq i \leq d$.
	Then it is clear that 
	\[
	\sum_{i= 1}^d |\sin(\pi \kappa \cdot h_i)|^{p'}
	\geq \left(\frac{2}{2N}\right)^{p'} \sum_{i=1}^{d} |\kappa \cdot e_i|^{p'}
	= \left(\frac{1}{N}\right)^{p'} |\kappa|_{p'}^{p'}
	\geq \left(\frac{1}{N}\right)^{p'} d^{\frac{1}{2} - \frac{1}{p'}} |\kappa|^{p'},
	\]
	so that the following estimate holds true:
	\begin{equation}\label{eq:FTTA2}
		\sum_{\abs{\kappa} \leq N} |\kappa|^{p'} |\FT{f}(\kappa)|^{p'} \leq N^{p'} d^{\frac{1}{p'} - \frac{1}{2}} \sum_{i=1}^d \sum_{\abs{\kappa}
			\leq \frac{1}{2\abs{h_i}}} |\sin (\pi \kappa \cdot h_i)|^{p'} |\FT{f}(\kappa)|^{p'}
		= O\left(N^{(1-\alpha)p'}\right).
	\end{equation}
	Using H\"older's inequality with exponents $\frac{p'}{\gamma}$ and $1/(1 - \frac{\gamma}{p'})$ where $1 \leq \gamma < p'$, and the estimate
	\[\sum_{\abs{\kappa} \leq N} 1 = O\left(N^d\right),\]
	we get
	\begin{equation}\label{eq:FTTA1}
		\sum_{\abs{\kappa} \leq N} \abs{\kappa}^{\gamma} |\FT{f}(\kappa)|^{\gamma}
		\leq \left( \sum_{\abs{\kappa} \leq N} \abs{\kappa}^{p'} |\FT{f}(\kappa)|^{p'} \right)^{\frac{\gamma}{p'}} \left( \sum_{\abs{\kappa} \leq N} 1 \right)^{1 - \frac{\gamma}{p'}}
		= O\left( N^{d + \gamma \left( 1 - \alpha - \frac{d}{p'} \right)} \right).
	\end{equation}
	Apply Duren's lemma \ref{lemma:DurenLatAdd} to \eqref{eq:FTTA1} in order to obtain that
	\begin{equation}\label{eq:FTTA3}
		\sum_{\abs{\kappa} > N} |\FT{f}(\kappa)|^{\gamma} = O\left(N^{d - \gamma \left(\alpha + \frac{d}{p'}\right)}\right),
	\end{equation}
	under the condition that $0 < d + \gamma \left(1 - \alpha - \frac{d}{p'}\right) < \gamma$. Since
	\[d + \gamma \left( 1 - \alpha - \frac{d}{p'} \right) = \gamma (1 - \alpha) + d \left( 1 - \frac{\gamma}{p'} \right) > 0,\]
	this condition amounts to
	\[\frac{d}{\alpha + \frac{d}{p'}} = \frac{d}{\alpha + d - \frac{d}{p}} = \frac{dp}{\alpha p + dp - d} = \frac{p}{p + \frac{\alpha}{d} p - 1} < \gamma.\]
	Note that in principle this result is only valid for $\gamma < p'$. The Hausdorff-Young inequality covers the single case $\gamma = p'$, but \cref{remark:lpEmbedding} extends the result to all $\gamma \geq p'$, concluding the proof.
\end{proof}

\begin{remark}\label{remark:HoldLipFirstTitchSharp}
	We cannot enlarge the range of \(\beta\) in \cref{th:firstTitchmarsh} to include \(\frac{p}{p + \frac{\alpha}{d} p - 1}\). There is already a one-dimensional counterexample, namely the function
	\[f(x) = \sum_{n=1}^{\infty} \frac{e^{i n \log n}}{n^{\frac{1}{2}+\alpha}}\]
	for \(0 < \alpha < 1\) is in \(\HoldLipSpace{\mathbb{T}}{\alpha}{2}\) but \(\FT{f} \not\in \ell^{\frac{2}{2\alpha + 1}}(\bZ)\). See \cite[page 243]{Zygmund} for details.
\end{remark}

Remark that the strategy that we employed in the proof of \ref{th:firstTitchmarsh} corresponds to the proof ideas from \cite[Theorem 84]{Titchmarsh} and \cite[Theorem 2.15]{LipThesis}. It is worth to note that a slightly different approach is being pursued in \cite[Theorem 3.2]{LipArticle} in the context of compact Lie groups to obtain an extra conclusion. However, the proof contains an inaccuracy. Namely, in the notation of that paper, the authors have found in \cite[Equation (3.6)]{LipArticle} that
\[\sum_{[\xi] \in \FT{G}, \langle \xi \rangle \leq N} d_{\xi}^2 \left( \frac{\langle \xi \rangle \lVert \FT{f}(\xi) \rVert_{\textnormal{HS}}}{\sqrt{d_{\xi}}} \right)^{\beta} = O\left( N^{(1-\alpha) \beta + n \left( 1 - \frac{\beta}{q} \right)} \right)\]
under the conditions that $n \in \bN \setminus \{0\}$, $0 < \alpha \leq 1$, $1 < p \leq 2$ with $q \in \bR$ such that $\frac{1}{p} + \frac{1}{q} = 1$, and $1 \leq \beta \leq q$.
Since the Fourier multiplier $(I - \cL_{G})^{\frac{1}{2}}$ has symbol $\langle \xi \rangle$, they derived that $\FT{(I - \cL_{G})^{\frac{1}{2}} f} \in \ell^{\beta}(\FT{G})$ if $(1 - \alpha) \beta + n \left( 1 - \frac{\beta}{q} \right) \leq 0$, but the conditions under which \cite[Equation (3.6)]{LipArticle} were derived imply that $(1 - \alpha) \beta + n \left( 1 - \frac{\beta}{q} \right) \geq 0$. Moreover, similar to the inaccuracy in the proof of the original first Titchmarsh theorem, the sum $\sum_{[\xi] \in \FT{G}, \langle \xi \rangle \leq N}$ is considered instead of the series $\sum_{[\xi] \in \FT{G}, \langle \xi \rangle > N}$.

Since this approach is interesting by itself and derives an extra result, we will discuss it in our context of fundamental domains of lattices. We start with fixing the inaccuracy for the range of $\beta$ in \cite[Theorem 3.2]{LipArticle}.

\begin{theorem}\label{th:HLSL}
	If $f \in \HoldLipSpaceFD{\alpha}{p}$ for some $0 < \alpha \leq 1$ and $1 < p \leq 2$, then $\FT{(I - \Delta_{\Omega})^{\frac{\delta}{2}} f} \in \ell^{\gamma}(\dualLat{L})$ for $\max\{\alpha - \frac{d}{p}, 0\} \leq \delta < \alpha$ and $\frac{dp'}{(\alpha - \delta) p' + d} < \gamma \leq \infty$.
\end{theorem}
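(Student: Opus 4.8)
Given $f\in\HoldLipSpaceFD{\alpha}{p}$, the plan is to re-run the proof of \cref{th:firstTitchmarsh}, this time carrying the Bessel weight through the computation before the Duren step. Since $\FT{(I-\Delta_{\Omega})^{\delta/2}f}(\kappa)=\langle\kappa\rangle^{\delta}\,\FT{f}(\kappa)$, the statement is equivalent to showing that $\bigl(\langle\kappa\rangle^{\delta}\,\FT{f}(\kappa)\bigr)_{\kappa\in\dualLat{L}}$ belongs to $\ell^{\gamma}(\dualLat{L})$. The first reduction exploits discreteness of $\dualLat{L}$: there is a constant $\mu>0$ with $\abs{\kappa}\geq\mu$ for all $\kappa\in\dualLat{L}\setminus\{0\}$, so $\langle\kappa\rangle^{\delta}\lesssim\abs{\kappa}^{\delta}$ on $\dualLat{L}\setminus\{0\}$ (this is where $\delta\geq0$ is used), while $\kappa=0$ contributes only the finite constant $\abs{\FT{f}(0)}^{\gamma}$. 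Hence it suffices to prove $\sum_{\kappa\neq0}\abs{\kappa}^{\delta\gamma}\abs{\FT{f}(\kappa)}^{\gamma}<\infty$.

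Next I would recall the estimate obtained inside the proof of \cref{th:firstTitchmarsh}, namely \eqref{eq:FTTA1}, which for $1\leq\gamma<p'$ reads
\[\sum_{\abs{\kappa}\leq N}\abs{\kappa}^{\gamma}\,\abs{\FT{f}(\kappa)}^{\gamma}=O\bigl(N^{\,d+\gamma(1-\alpha-d/p')}\bigr),\]
and feed it into \cref{lemma:DurenLatAdd} applied to the non-negative sequence $c_{\kappa}:=\abs{\kappa}^{\delta\gamma}\abs{\FT{f}(\kappa)}^{\gamma}$ (with $c_{0}:=0$), taking $b:=\gamma(1-\delta)$ and $a:=d+\gamma(1-\alpha-d/p')$. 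The exponent $b$ is chosen so that $b+\delta\gamma=\gamma$, which makes $\sum_{\abs{\kappa}\leq N}\abs{\kappa}^{b}c_{\kappa}$ coincide with the left-hand side above, hence $O(N^{a})$. One then verifies the hypotheses of \cref{lemma:DurenLatAdd}: $b>0$ because $\delta<\alpha\leq1$; $a>0$ because $a=d(1-\gamma/p')+\gamma(1-\alpha)>0$ for $\gamma<p'$; and the decisive inequality $a<b$ rearranges exactly to $\gamma>\frac{dp'}{(\alpha-\delta)p'+d}$. The lemma then gives $\sum_{\abs{\kappa}>N}c_{\kappa}=O(N^{a-b})$, whence $\sum_{\kappa}c_{\kappa}<\infty$, establishing $\langle\cdot\rangle^{\delta}\FT{f}\in\ell^{\gamma}(\dualLat{L})$ for every $\frac{dp'}{(\alpha-\delta)p'+d}<\gamma<p'$.

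To reach $\gamma=\infty$ I would argue as in \cref{th:firstTitchmarsh}: since $\delta<\alpha$ forces $\frac{dp'}{(\alpha-\delta)p'+d}<p'$, the interval just handled is nonempty, so \cref{prop:lpEmbedding} (see also \cref{remark:lpEmbedding}) upgrades the conclusion to all $\gamma\geq p'$. For orientation, it is worth noting that $\delta=0$ recovers \cref{th:firstTitchmarsh}, and that the hypothesis $\delta\geq\max\{\alpha-d/p,0\}$ is precisely what keeps the admissible range $\bigl(\frac{dp'}{(\alpha-\delta)p'+d},\infty\bigr]$ inside $[1,\infty]$: using $1/p+1/p'=1$, the value $\delta=\alpha-d/p$ gives $(\alpha-\delta)p'+d=dp'$, so the left endpoint equals $1$.

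I expect the argument to be essentially bookkeeping, with the only delicate point being the first reduction: the Bessel weight $\langle\kappa\rangle^{\delta}$ is comparable to $\abs{\kappa}^{\delta}$ only away from the origin, so discreteness of $\dualLat{L}$ must be invoked and the $\kappa=0$ term isolated, and the Duren exponent $b$ must be picked so that the input to \cref{lemma:DurenLatAdd} is literally the estimate \eqref{eq:FTTA1}. After that, translating $0<a<b$ into the stated $\gamma$-range is a one-line computation.
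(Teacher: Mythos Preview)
Your argument is correct and follows the same overall architecture as the paper's proof: Hausdorff--Young and Jordan yield \eqref{eq:FTTA2}, H\"older produces a weighted partial-sum estimate, Duren's lemma~\ref{lemma:DurenLatAdd} converts it to a tail estimate, and \cref{prop:lpEmbedding} extends the range of $\gamma$. The one genuine difference is in where the Bessel weight $\abs{\kappa}^{\delta\gamma}$ is absorbed. The paper re-applies H\"older starting from \eqref{eq:FTTA2} with the extra weight pushed into the second H\"older factor, obtaining a bound on $\sum_{\abs{\kappa}\leq N}\abs{\kappa}^{(1+\delta)\gamma}\abs{\FT{f}(\kappa)}^{\gamma}$ and then running Duren with $b=\gamma$. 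You instead reuse \eqref{eq:FTTA1} verbatim and shift the weight into the Duren parameter by taking $b=\gamma(1-\delta)$. Both routes produce the identical tail exponent $a-b=(\delta-\alpha)\gamma+d(1-\gamma/p')$ and the identical threshold $\gamma>\frac{dp'}{(\alpha-\delta)p'+d}$; your version is marginally more economical since it avoids repeating the H\"older computation, at the cost of a less transparent choice of $b$. Your handling of the reduction from $\langle\kappa\rangle^{\delta}$ to $\abs{\kappa}^{\delta}$ via discreteness and your remark that $\delta\geq\alpha-d/p$ is exactly the condition making the threshold at least $1$ match the paper's discussion.
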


\begin{proof}
	Let $1 \leq \gamma \leq p'$ and $\delta \geq 0$. Applying H\"older's inequality with exponents $\frac{p'}{\gamma}$ and $\frac{1}{1-\frac{\gamma}{p'}}$, we find that
	\[
	\sum_{\abs{\kappa} \leq N} \abs{\kappa}^{\delta \gamma} \abs{\kappa}^{\gamma} |\FT{f}(\kappa)|^{\gamma}
	\leq \left(\sum_{\abs{\kappa} \leq N} \abs{\kappa}^{p'} |\FT{f}(\kappa)|^{p'}\right)^{\frac{\gamma}{p'}} \left(\sum_{\abs{\kappa} \leq N} \abs{\kappa}^{\frac{\delta \gamma}{1 - \frac{\gamma}{p'}}}\right)^{1 - \frac{\gamma}{p'}}.
	\]
	Note that
	\[\left(\sum_{\abs{\kappa} \leq N} \abs{\kappa}^{\frac{\delta \gamma}{1 - \frac{\gamma}{p'}}}\right)^{1 - \frac{\gamma}{p'}}
	\leq \left( N^{\frac{\delta \gamma}{1 - \frac{\gamma}{p'}}} \sum_{\abs{\kappa} \leq N}  1 \right)^{1 - \frac{\gamma}{p'}}
	= N^{\delta \gamma + d\left(1 - \frac{\gamma}{p'}\right)}\]
	because $\frac{\delta \gamma}{1 - \frac{\gamma}{p'}} > 0$, which is guaranteed to be the case as $1 \leq \gamma \leq p'$ and $\delta > 0$. Hence, we find using \eqref{eq:FTTA2} that
	\[\sum_{\abs{\kappa} \leq N} \abs{\kappa}^{\delta \gamma} \abs{\kappa}^{\gamma} |\FT{f}(\kappa)|^{\gamma}
	= O\left(N^{(1 - \alpha)\gamma + \delta \gamma + d \left(1 - \frac{\gamma}{p'}\right)}\right).\]
	We can now apply Duren's lemma \ref{lemma:DurenLatAdd} to obtain that
	\[\sum_{\abs{\kappa} > N} \abs{\kappa}^{\delta \gamma} |\FT{f}(\kappa)|^{\gamma}
	= O\left(N^{(\delta - \alpha) \gamma + d \left(1 - \frac{\gamma}{p'}\right)}\right) \]
	under the condition that $0 < (1 - \alpha) \gamma + \delta \gamma + d \left( 1 - \frac{\gamma}{p'} \right) < \gamma$. Note that the left inequality is satisfied, and the right one can be rewritten as
	\begin{equation}\label{ineq:LI1}
		(\delta - \alpha) \gamma + d \left( 1 - \frac{\gamma}{p'}\right) < 0.
	\end{equation}
	Note that \eqref{ineq:LI1} can only be satisfied if $\delta < \alpha$. We can rewrite \eqref{ineq:LI1} as
	\[\gamma > \frac{dp'}{(\alpha - \delta)p' + d}.\]
	Since $1 \leq \gamma \leq p'$, the latter inequality can only be satisfied if $1 \leq \frac{dp'}{(\alpha - \delta)p' + d} < p'$. We see that
	\[\frac{dp'}{(\alpha - \delta)p' + d} = \frac{d}{d + (\alpha - \delta)p'} p' < p'\]
	is true as $(\alpha - \delta)p' > 0$. A direct calculation shows that the other inequality is valid when
	\[\delta \geq \alpha - \frac{d}{p}.\]
	Taking the restriction $\delta \geq 0$ and \cref{remark:lpEmbedding} into account, this proves the result.
\end{proof}

The idea in \cite[Theorem 3.2]{LipArticle} is to derive from $\FT{(I - \Delta_{\Omega})^{\frac{\delta}{2}} f} \in \ell^{\gamma}(\dualLat{L})$ for a suitable range of $\gamma$ and $\delta$, a range for $\beta$ for which $\FT{f} \in \ell^{\beta}(\dualLat{L})$. This step is captured in the following lemma.

\begin{lemma}\label{lemma:Symbol}
	Let $\sigma : \dualLat{L} \to \bC$ be a function. Let $1 \leq r < \infty$ and $\gamma > 0$ be positive real numbers. If $\langle \kappa \rangle^{\gamma} \sigma(\kappa) \in \ell^{r}(\dualLat{L})$ then $\sigma \in \ell^\beta(\dualLat{L})$ for all $ \frac{r d}{d +\gamma r} < \beta \leq \infty$. Here, we are using the notation $\langle \kappa \rangle : = (1 + \abs{\kappa}^{2})^{1/2}$.
\end{lemma}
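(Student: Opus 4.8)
The plan is to obtain the conclusion from a single application of Hölder's inequality for sums, after which everything reduces to the convergence of a lattice zeta-type sum $\sum_{\kappa \in \dualLat{L}} \langle \kappa \rangle^{-s}$.

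First I would dispose of the easy range $\beta \geq r$. Since $\langle \kappa \rangle \geq 1$ for all $\kappa$, the hypothesis $\langle \kappa \rangle^{\gamma} \sigma(\kappa) \in \ell^{r}(\dualLat{L})$ immediately gives $\sigma \in \ell^{r}(\dualLat{L})$, and hence, by \cref{prop:lpEmbedding}, $\sigma \in \ell^{\beta}(\dualLat{L})$ for every $\beta \geq r$. As $\gamma r > 0$ we have $\frac{rd}{d + \gamma r} < r$, so the interval $\bigl(\frac{rd}{d+\gamma r}, r\bigr)$ is nonempty and it remains only to treat $\beta$ in this range.

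For such $\beta$, I would write $|\sigma(\kappa)|^{\beta} = \bigl( \langle \kappa \rangle^{\gamma} |\sigma(\kappa)| \bigr)^{\beta} \, \langle \kappa \rangle^{-\gamma \beta}$ and apply Hölder's inequality with the conjugate exponents $\frac{r}{\beta}$ and $\frac{r}{r - \beta}$, obtaining
\[
\sum_{\kappa \in \dualLat{L}} |\sigma(\kappa)|^{\beta} \leq \Bigl( \sum_{\kappa \in \dualLat{L}} \langle \kappa \rangle^{\gamma r} |\sigma(\kappa)|^{r} \Bigr)^{\beta / r} \Bigl( \sum_{\kappa \in \dualLat{L}} \langle \kappa \rangle^{- \frac{\gamma \beta r}{r - \beta}} \Bigr)^{1 - \frac{\beta}{r}}.
\]
The first factor is finite by hypothesis, so the proof reduces to showing that $\sum_{\kappa \in \dualLat{L}} \langle \kappa \rangle^{-s}$ converges for $s := \frac{\gamma \beta r}{r - \beta}$.

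The only step requiring genuine work is the standard fact that for a lattice in $\bR^{d}$ one has $\sum_{\kappa \in \dualLat{L}} \langle \kappa \rangle^{-s} < \infty$ when $s > d$ (only this direction is needed). I would prove it by comparison with an integral: fixing a bounded fundamental domain $Q$ of $\dualLat{L}$, one has $\langle \kappa \rangle \asymp \langle x \rangle$ uniformly for $x \in \kappa + Q$, so that $\sum_{\kappa} \langle \kappa \rangle^{-s} \lesssim \frac{1}{|Q|} \int_{\bR^{d}} \langle x \rangle^{-s} \, \d{x}$, and the last integral converges for $s > d$ by passing to polar coordinates. (Alternatively, one could use the boundary-layer count $\#\{\kappa \in \dualLat{L} : n < |\kappa| \leq n+1\} = O(n^{d-1})$, refining the estimate $\#\{|\kappa| \leq N\} = O(N^{d})$ already used in the proof of \cref{th:firstTitchmarsh}, and sum the resulting series.) Finally, the condition $s > d$ unwinds to $\gamma \beta r > d(r - \beta)$, i.e. $\beta(\gamma r + d) > dr$, i.e. $\beta > \frac{rd}{d + \gamma r}$, which is precisely our assumption on $\beta$. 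Combining this range with the range $\beta \geq r$ handled above, and invoking \cref{remark:lpEmbedding} if one prefers to verify only $\beta$ near the left endpoint, completes the proof.
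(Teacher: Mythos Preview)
Your proof is correct and follows essentially the same route as the paper: split off $|\sigma(\kappa)|^\beta = (\langle\kappa\rangle^\gamma|\sigma(\kappa)|)^\beta \langle\kappa\rangle^{-\gamma\beta}$, apply H\"older with exponents $\frac{r}{\beta}$ and $\frac{r}{r-\beta}$, reduce to the convergence of $\sum_{\kappa}\langle\kappa\rangle^{-s}$ for $s>d$, and invoke \cref{remark:lpEmbedding} for the remaining range. The only difference is that the paper asserts the convergence of the lattice zeta sum without justification and handles $\beta\geq r$ purely via \cref{remark:lpEmbedding}, whereas you treat $\beta\geq r$ explicitly and sketch the comparison argument for the zeta sum.
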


\begin{proof}
	For $\beta < r$ we find using H\"older's inequality that
	\[\normIn{\sigma}{\ell^{\beta}(\dualLat{L})}^{\beta}
	= \sum_{\kappa \in \dualLat{L}} \langle \kappa \rangle^{\gamma \beta} \abs{\sigma(\kappa)}^{\beta} \langle \kappa \rangle^{-\gamma \beta}
	\leq \left( \sum_{\kappa \in \dualLat{L}} \langle \kappa \rangle^{\gamma r} \abs{\sigma(\kappa)}^r \right)^{\frac{\beta}{r}} \left( \sum_{\kappa \in \dualLat{L}} \langle \kappa \rangle^{- \frac{\gamma \beta r}{r - \beta}}\right)^{1 - \frac{\beta}{r}}.\]
	We observe that
	\[\sum_{\kappa \in \dualLat{L}} \langle \kappa \rangle^{- \frac{\gamma \beta r}{r - \beta}} = \sum_{\kappa \in \dualLat{L}} \frac{1}{(1 + \abs{\kappa}^2)^{\frac{1}{2} \frac{\gamma \beta r}{r - \beta}}}\]
	converges if $\frac{\gamma \beta r}{r - \beta} > d$, or in other words when $\beta > \frac{rd}{\gamma r + d}$, which proves the lemma because of \cref{remark:lpEmbedding}.
\end{proof}

Now, we can derive the first Titchmarsh theorem by applying \cref{lemma:Symbol} to \cref{th:HLSL}. Namely, since $\FT{(I - \Delta_{\Omega})^{\frac{\delta}{2}} f} \in \ell^{\gamma}(\dualLat{L})$ for $\max\{\alpha - \frac{d}{p}, 0\} \leq \delta < \alpha$ and $\frac{dp'}{(\alpha - \delta) p' + d} < \gamma \leq \infty$, we find formally by choosing $\gamma \to \alpha$ that $r = \frac{dp'}{(\alpha - \gamma) p' + d} \to p'$ so that $\FT{f} \in \ell^{\beta}(\dualLat{L})$ for $\beta > \frac{rd}{d + \gamma r} \to \frac{dp'}{d + \alpha p'} = \frac{p}{p + \frac{\alpha}{d} p - 1}$. The reader should have no difficulty in making this reasoning precise.

We proceed to a discussion of the second Titchmarsh theorem for the additive H\"older-Lipschitz spaces. Our proof is based on the one of \cite[Theorem 2.17]{LipThesis}, which treats the two-dimensional case.

\begin{theorem}\label{th:secondTitchmarsh}
	Let \(0 < \alpha < 1\) and \(f \in L^{2}(\Omega)\). Then \(f \in \HoldLipSpaceFD{\alpha}{2}\) if and only if
	\begin{equation}\label{eq:secondTitchmarsh}
		\sum_{\abs{\kappa} > N} |\FT{f}(\kappa)|^{2} = O\left(N^{-2\alpha}\right).
	\end{equation}
\end{theorem}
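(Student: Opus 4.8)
The plan is to prove the equivalence using a Plancherel-type identity together with the additive lattice Duren lemma (\cref{lemma:DurenLatAdd}) with the exponent choice $p = 2$, $p' = 2$, which is exactly the endpoint case where H\"older's inequality in the proof of \cref{th:firstTitchmarsh} becomes an identity. First I would record that for $f \in L^2(\Omega)$ and $h \in \bR^d$, Plancherel's formula \eqref{eq:Plancherel} applied to $f(\cdot+h)-f(\cdot)$ gives
\[\normIn{f(\cdot+h)-f(\cdot)}{L^2(\Omega)}^2 = \sum_{\kappa \in \dualLat{L}} |e^{2\pi i \kappa \cdot h}-1|^2 \, |\FT{f}(\kappa)|^2 = 4 \sum_{\kappa \in \dualLat{L}} \sin^2(\pi \kappa \cdot h) \, |\FT{f}(\kappa)|^2.\]
This exact identity (no loss from Hausdorff--Young, since $p=p'=2$) is what makes the second Titchmarsh theorem an equivalence rather than a one-way implication.

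For the forward direction, assuming $f \in \HoldLipSpaceFD{\alpha}{2}$, I would choose $h_i := \tfrac{1}{2N} e_i$ exactly as in the proof of \cref{th:firstTitchmarsh}, so that Jordan's inequality \eqref{ineq:JordanIneq} applies on $|\kappa| \leq N$ and yields, after summing over $i = 1,\dots,d$,
\[\sum_{\abs{\kappa}\leq N} \abs{\kappa}^2 \, |\FT{f}(\kappa)|^2 \lesssim N^2 \sum_{i=1}^d \sum_{\abs{\kappa}\leq N} \sin^2(\pi \kappa \cdot h_i)\,|\FT{f}(\kappa)|^2 \lesssim N^2 \cdot N^{2(1-\alpha)}\cdot \tfrac14 = O(N^{2(1-\alpha)}),\]
using the identity above with $|h_i| = \tfrac{1}{2N}$ and the Lipschitz estimate $\normIn{f(\cdot+h_i)-f(\cdot)}{L^2(\Omega)}^2 = O(|h_i|^{2\alpha})$. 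Thus $\sum_{|\kappa|\leq N} |\kappa|^2 |\FT{f}(\kappa)|^2 = O(N^{2-2\alpha})$, and applying \cref{lemma:DurenLatAdd} with $b = 2$ and $a = 2 - 2\alpha$ (which satisfies $0 < a < b$ precisely because $0 < \alpha < 1$) gives $\sum_{|\kappa| > N} |\FT{f}(\kappa)|^2 = O(N^{-2\alpha})$, which is \eqref{eq:secondTitchmarsh}.

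For the converse, assume \eqref{eq:secondTitchmarsh}. By \cref{lemma:DurenLatAdd} (used in the other direction) this is equivalent to $\sum_{|\kappa| \leq N} |\kappa|^2 |\FT{f}(\kappa)|^2 = O(N^{2-2\alpha})$. Now fix $h \neq 0$ and split the Plancherel identity at $N \approx \tfrac{1}{|h|}$: on the low-frequency part $|\kappa| \leq \tfrac{1}{|h|}$ use $\sin^2(\pi \kappa \cdot h) \leq \pi^2 |\kappa \cdot h|^2 \leq \pi^2 |h|^2 |\kappa|^2$ to bound that contribution by $|h|^2 \sum_{|\kappa| \leq 1/|h|} |\kappa|^2 |\FT{f}(\kappa)|^2 = O(|h|^2 \cdot |h|^{-(2-2\alpha)}) = O(|h|^{2\alpha})$; on the high-frequency part $|\kappa| > \tfrac{1}{|h|}$ use the trivial bound $\sin^2 \leq 1$ to get $4 \sum_{|\kappa| > 1/|h|} |\FT{f}(\kappa)|^2 = O(|h|^{2\alpha})$ directly from \eqref{eq:secondTitchmarsh}. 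Adding the two pieces gives $\normIn{f(\cdot+h)-f(\cdot)}{L^2(\Omega)}^2 = O(|h|^{2\alpha})$, i.e.\ $f \in \HoldLipSpaceFD{\alpha}{2}$. The main obstacle, such as it is, is the bookkeeping in the forward direction --- making sure the coordinatewise choice $h_i = \tfrac{1}{2N}e_i$ together with the equivalence $|\kappa|_2 \asymp |\kappa|_{2}$ recovers a clean power of $N$ as in \eqref{eq:FTTA2}; everything else is a direct application of \cref{lemma:DurenLatAdd} with $p = 2$ and the elementary two-sided bound on $\sin^2$.
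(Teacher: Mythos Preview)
Your proof is correct and follows essentially the same approach as the paper: the forward direction reproduces the computation leading to \eqref{eq:FTTA2} (with $p=p'=2$) followed by \cref{lemma:DurenLatAdd}, and the converse uses the identical low/high frequency split at $N_h \approx 1/|h|$ via the reverse direction of \cref{lemma:DurenLatAdd}. One arithmetic slip: in your displayed chain the middle term should read $N^2 \cdot O(N^{-2\alpha})$ rather than $N^2 \cdot N^{2(1-\alpha)}$, though your final bound $O(N^{2(1-\alpha)})$ is correct.
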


\begin{proof}
	First assume that \(f \in \HoldLipSpace{\Omega}{\alpha}{2}\). Following the arguments in the proof of \cref{th:firstTitchmarsh}, note that \eqref{eq:FTTA3} is valid for $\gamma = p'$ if $0 < \alpha < 1$. Hence, for $p = p' = 2$ we get that
	\[\sum_{\abs{\kappa} > N} |\FT{f}(\kappa)|^2 = O\left( N^{-2\alpha} \right).\]
	
	Conversely, assume that \eqref{eq:secondTitchmarsh} holds. An application of Duren's lemma \ref{lemma:DurenLatAdd} to \eqref{eq:secondTitchmarsh} gives that
	\[\sum_{\abs{\kappa} \leq N} \abs{\kappa}^2 |\FT{f}(\kappa)|^2 = O\left( N^{2-2\alpha} \right)\]
	under the condition that $0 < \alpha < 1$, which is one of the assumptions in the theorem. Consider $h \in \bR^d$ with $\abs{h}$ sufficiently small, and set $N_h := \floor{\frac{1}{\abs{h}}}$. Hence, via the inequality \(\abs{\sin t} \leq \abs{t}\) for all \(t \in \bR\), we obtain that
	\begin{align*}
		\sum_{\kappa \in \dualLat{L}} \sin^{2}(\pi \kappa \cdot h) |\FT{f}(\kappa)|^{2}
		&\leq \pi^{2} \abs{h}^{2} \sum_{\abs{\kappa} \leq N_h} \abs{\kappa}^2 |\FT{f}(\kappa)|^{2} + \sum_{\abs{\kappa} > N_h} |\FT{f}(\kappa)|^{2} \\
		&\lesssim \abs{h}^2 N_h^{2-2\alpha} + N_h^{-2\alpha} \\
		&\lesssim \abs{h}^{2\alpha},
	\end{align*}
	where we used that $\frac{1}{2 \abs{h}} \leq N_h \leq \frac{1}{\abs{h}}$ if $N_h \geq 1$; that is, if $\abs{h}$ is sufficiently small. Noting that Plancherel's theorem \eqref{eq:Plancherel} gives the relation
	\[\normIn{f(\cdot+h)-f(\cdot)}{L^{2}(\Omega)}^{2} = \sum_{\kappa \in \dualLat{L}} 4 \sin^{2}(\pi \kappa \cdot h) \, |\FT{f}(\kappa)|^{2},\]
	this completes the proof.
\end{proof}

\subsection{Multiplicative case}

Remark that the conclusion of the first Titchmarsh theorem for additive H\"older-Lipschitz spaces weakens as the dimension \(d\) increases. This feature does not seem to be inherent for every function space satisfying a H\"older-Lipschitz condition, since we will show that our first Titchmarsh theorem for the multiplicative H\"older-Lipschitz spaces is independent of the dimension. The crucial ingredient in the proof of this result is the following generalization of Duren's lemma.

\begin{lemma}[Duren-type lemma for lattices, multiplicative case]\label{lemma:DurenLatMult}
	Let $d \in \bN \setminus \{0\}$, and let $L$ be a lattice in $\bR^d$. Consider a non-negative sequence $c_{\lambda} \geq 0$ with $\lambda \in L$, and let $0 < a_n < b_n$ for every $1 \leq n \leq d$. Then all estimates of the following form are equivalent:
	\begin{equation}\label{eq:DurenLatticeMultEstimate}
		\sum_{\lambda_{1} \in I_{\varepsilon_{1}(N_{1})}} \dots \sum_{\lambda_{d} \in I_{\varepsilon_{d}(N_{d})}} \big|\lambda_{1}\big|^{\varepsilon_{1} b_1} \dots \big|\lambda_{d}\big|^{\varepsilon_{d} b_d} c_{\lambda}
		= O\left(N_{1}^{a_1 - (1 - \varepsilon_{1}) b_1} \dots N_{d}^{a_d - (1 - \varepsilon_{d}) b_d}\right),
	\end{equation}
	where \(\varepsilon_{1}, \dots, \varepsilon_{d} \in \{0,1\}\), \(I_{1}(N) := [-N,N]\) and \(I_{0}(N) := \bR \setminus [-N,N]\) for \(N \in \bN\).
\end{lemma}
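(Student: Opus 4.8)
The plan is to reduce \cref{lemma:DurenLatMult} to the one-dimensional Duren lemma, handling one coordinate at a time, exactly as in the proofs of \cref{lemma:Duren} and \cref{lemma:DurenLatAdd}. First I would observe that, since the $d$ estimates appearing in \eqref{eq:DurenLatticeMultEstimate} are parametrized by the vector $(\varepsilon_1,\dots,\varepsilon_d) \in \{0,1\}^d$, it suffices to show that flipping a single coordinate $\varepsilon_j$ between $0$ and $1$ (leaving the others fixed) produces an equivalent estimate; chaining such flips then connects any two of the $2^d$ estimates. Without loss of generality I would take $j = 1$, so the task is to prove that
\[
\sum_{\lambda_1 \in I_1(N_1)} \sum_{\lambda_2 \in I_{\varepsilon_2}(N_2)} \dots \sum_{\lambda_d \in I_{\varepsilon_d}(N_d)} \big|\lambda_1\big|^{b_1} \big|\lambda_2\big|^{\varepsilon_2 b_2} \dots \big|\lambda_d\big|^{\varepsilon_d b_d} c_{\lambda} = O\left(N_1^{a_1} N_2^{a_2 - (1-\varepsilon_2)b_2} \dots N_d^{a_d - (1-\varepsilon_d)b_d}\right)
\]
is equivalent to the same statement with $\sum_{\lambda_1 \in I_1(N_1)} |\lambda_1|^{b_1}$ replaced by $\sum_{\lambda_1 \in I_0(N_1)}$ and the exponent $N_1^{a_1}$ replaced by $N_1^{a_1 - b_1}$.

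Next I would freeze $N_2,\dots,N_d$ and collapse the inner $d-1$ sums into a single nonnegative coefficient indexed by the first coordinate of $\lambda$. The subtlety compared to \cref{lemma:DurenLatAdd} is that $\lambda$ ranges over the lattice $L$, not over $\bZ^d$, so the first coordinate $\lambda_1$ takes values in the (generally dense, or at least not integer-spaced) set $\{\lambda_1 : \lambda \in L\}$; moreover the constraints $|\lambda_1| \le N_1$ and $|\lambda_1| > N_1$ are constraints on a real number. To route this through the integer-indexed \cref{lemma:Duren}, I would dyadically-by-unit-length group the lattice points according to $\lfloor |\lambda_1| \rfloor$: for $n \in \bN$ set
\[
C_{n, N_2, \dots, N_d} := \sum_{\substack{\lambda \in L \\ n < |\lambda_1| \le n+1}} \sum_{\lambda_2 \in I_{\varepsilon_2}(N_2)} \dots \sum_{\lambda_d \in I_{\varepsilon_d}(N_d)} \big|\lambda_2\big|^{\varepsilon_2 b_2} \dots \big|\lambda_d\big|^{\varepsilon_d b_d} c_{\lambda}.
\]
Then, following the estimates in the proof of \cref{lemma:DurenLatAdd}, I would sandwich the true sum over $\{|\lambda_1| \le N\}$ between $\sum_{n=1}^{N} n^{b_1} C_{n,\dots}$ and $\sum_{\lambda_1 \le N+1}|\lambda_1|^{b_1}(\cdots)$, using $n^{b_1} \le |\lambda_1|^{b_1} \le (n+1)^{b_1} \lesssim n^{b_1}$ on each block, and do the analogous sandwiching for the tail sum $\{|\lambda_1| > N\}$. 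Applying the one-dimensional case of \cref{lemma:Duren} to the sequence $(C_{n,N_2,\dots,N_d})_n$, divided by the fixed factor $N_2^{a_2-(1-\varepsilon_2)b_2}\dots N_d^{a_d-(1-\varepsilon_d)b_d}$, yields the desired equivalence in the first coordinate uniformly in the frozen $N_2,\dots,N_d$.

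Finally I would iterate: having exchanged the role of coordinate $1$, repeat the argument on coordinate $2$, and so on, each time freezing the remaining variables; after $d$ steps every $\varepsilon_j$ can be set to any prescribed value, which gives the full equivalence of all $2^d$ estimates claimed. The main obstacle — and the only place requiring genuine care rather than bookkeeping — is the first reduction step: ensuring that the collapsed coefficient $C_{n,N_2,\dots,N_d}$ is finite and that the unit-length blocking argument carries the $O$-constants uniformly across the frozen variables, so that after dividing by the frozen product one is legitimately in the hypotheses of the scalar Duren lemma. Everything else is a routine adaptation of the proofs of \cref{lemma:Duren} and \cref{lemma:DurenLatAdd} already given above.
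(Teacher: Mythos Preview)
Your proposal is correct and follows essentially the same route as the paper: reduce to flipping a single $\varepsilon_j$ (WLOG $j=1$), freeze $N_2,\dots,N_d$, collapse the inner sums, block the first coordinate into unit-length shells $\{n < |\lambda_1| \le n+1\}$ exactly as in \cref{lemma:DurenLatAdd}, and apply the one-dimensional case of \cref{lemma:Duren} to the resulting integer-indexed sequence (after dividing out the frozen product, as in the proof of \cref{lemma:Duren}). The paper's proof is organized slightly differently---it defines $C_{\lambda_1,N_2,\dots,N_d}$ indexed by the real coordinate $\lambda_1$ and then blocks---but this is only a notational variant of your $C_{n,N_2,\dots,N_d}$.
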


\begin{proof}
	It suffices to prove this result sum by sum, and without loss of generality we can prove it for the first sum, i.e.,
	\begin{multline}\label{eq:DurenLatMult1}
		\sum_{\abs{\lambda_1} \leq N_1} \sum_{\lambda_2 \in I_{\varepsilon_2}(N_2)} \dots \sum_{\lambda_d \in I_{\varepsilon_{d}}(N_d)} \abs{\lambda_1}^{b_1} \abs{\lambda_2}^{\varepsilon_2 b_2} \dots \abs{\lambda_d}^{\varepsilon_d b_d} c_{\lambda} \\
		= O\left( N_{1}^{a_1} N_{2}^{a_2 - (1 - \varepsilon_2) b_2} \dots N_{d}^{a_d - (1 - \varepsilon_d) b_d} \right)
	\end{multline}
	is equivalent to
	\begin{multline}\label{eq:DurenLatMult2}
		\sum_{\abs{\lambda_1} > N_1} \sum_{\lambda_2 \in I_{\varepsilon_2}(N_2)} \dots \sum_{\lambda_d \in I_{\varepsilon_{d}}(N_d)} \abs{\lambda_2}^{\varepsilon_2 b_2} \dots \abs{\lambda_d}^{\varepsilon_d b_d} c_{\lambda} \\
		= O\left( N_{1}^{a_1 - b_1} N_{2}^{a_2 - (1 - \varepsilon_2) b_2} \dots N_{d}^{a_d - (1 - \varepsilon_d) b_d} \right).
	\end{multline}
	For ease of notation, set $C_{\lambda_1, N_2, \dots, N_d} := \sum_{\lambda_2 \in I_{\varepsilon_2}(N_2)} \dots \sum_{\lambda_d \in I_{\varepsilon_{d}}(N_d)} \abs{\lambda_2}^{\varepsilon_2 b_2} \dots \abs{\lambda_d}^{\varepsilon_d b_d} c_{\lambda}$.
	
	Assume that \eqref{eq:DurenLatMult1} holds. In this case we have
	\begin{align*}
		\sum_{n_{1} = 1}^{N_1} n_{1}^{b_1} \left( \sum_{n_1 < \abs{\lambda_1} \leq n_{1} + 1} C_{\lambda_1, N_2, \dots, N_d} \right)
		&\leq \sum_{n_{1} = 0}^{N_1} \sum_{n_1 < \abs{\lambda_1} \leq n_1 + 1} \abs{\lambda_1}^{b_1} C_{\lambda_1, N_2, \dots, N_d} \\
		&= \sum_{\abs{\lambda_{1}} \leq N_1} \abs{\lambda_1}^{b_1} C_{\lambda_1, N_2, \dots, N_d} \\
		&= O\left( N_{1}^{a_1} N_{2}^{a_2 - (1 - \varepsilon_2) b_2} \dots N_{d}^{a_d - (1 - \varepsilon_d) b_d} \right).
	\end{align*}
	It follows from Duren's lemma \ref{lemma:Duren} that
	\begin{align*}
		\sum_{\abs{\lambda_1} > N_1} C_{\lambda_1, N_2, \dots, N_d}
		&= \sum_{n_1 \geq N_1} \sum_{n_1 < \abs{\lambda_1} \leq n_{1} + 1} C_{\lambda_1, N_2, \dots, N_d} \\
		&= O\left( N_{1}^{a_1 - b_1} N_{2}^{a_2 - (1 - \varepsilon_2) b_2} \dots N_{d}^{a_d - (1 - \varepsilon_d) b_d} \right).
	\end{align*}
	
	Conversely, assume that \eqref{eq:DurenLatMult2} holds. Then
	\begin{align*}
		\sum_{n_1 > N_1} \sum_{n_{1} - 1 < \abs{\lambda_1} \leq n_1} C_{\lambda_1, N_2, \dots, N_d} &= \sum_{\abs{\lambda_1} > N_1} C_{\lambda, N_2, \dots, N_d} \\
		&= O\left( N_{1}^{a_1 - b_1} N_{2}^{a_2 - (1 - \varepsilon_2) b_2} \dots N_{d}^{a_d - (1 - \varepsilon_d) b_d} \right).
	\end{align*}
	We get from Duren's lemma \ref{lemma:Duren} that
	\begin{align*}
		\sum_{\abs{\lambda_1} \leq N_1} \abs{\lambda_1}^{b_1} C_{\lambda_1, N_2, \dots, N_d}
		&\leq \sum_{n_1 = 1}^{N_1} \sum_{n_{1} - 1 < \abs{\lambda_1} \leq n_1} n_{1}^{b_1} C_{\lambda_1, N_2, \dots, N_d} \\
		&= O\left( N_{1}^{a_1} N_{2}^{a_2 - (1 - \varepsilon_2) b_2} \dots N_{d}^{a_d - (1 - \varepsilon_d) b_d} \right),
	\end{align*}
	which proves the lemma.
\end{proof}

Let us now introduce some operators that represent coordinate-wise translations rather a global translation as in the additive case.

\begin{definition}\label{def:diffOpMult}
	We define for any \(1 \leq j \leq d\) and \(h_{j} \in \bR\) the difference operator \(\diffOpForw_{j}^{h_{j}}\) acting on a function \(f \in L^{p}(\Omega)\) for some \(1 \leq p \leq \infty\) by
	\[\diffOpForw_{j}^{h_{j}} f(x) := f(x + h_{j} e_{j}) - f(x),\]
	where \(e_{j}\) is the \(j\)-th element of the standard basis of \(\bR^{d}\).
\end{definition}

These coordinate-wise difference operators enable us to define the multiplicative H\"older-Lipschitz spaces $\HoldLipSpaceMultFD{\alpha}{p}$.

\begin{definition}
	Let \(\alpha \in \bR\) with \(0 < \alpha \leq 1\), and let \(1 < p \leq \infty\). We define the (multiplicative) \emph{H\"older-Lipschitz space} \(\HoldLipSpaceMultFD{\alpha}{p}\) by
	\[\HoldLipSpaceMultFD{\alpha}{p} := \left\{f \in L^{p}(\Omega) : \normIn{\diffOpForw_{1}^{h_{1}} \dots \diffOpForw_{d}^{h_{d}} f}{L^{p}(\Omega)} = O\left(\abs{h_{1}}^{\alpha} \dots \abs{h_{d}}^{\alpha}\right) \text{ as } h \to 0\right\}.\]
\end{definition}

\begin{remark}\label{rem:HoldLipMultAlt}
	As in the case of additive H\"older-Lipschitz spaces, we can consider an alternative multiplicative H\"older-Lipschitz space that, for any \(0 < \alpha_{1}, \dots, \alpha_{d} \leq 1\), is given by
	\[\HoldLipSpaceMultFD{\alpha_{1}, \dots, \alpha_{d}}{p} := \left\{f \in L^{p}(\Omega) : \normIn{\diffOpForw_{1}^{h_{1}} \dots \diffOpForw_{d}^{h_{d}} f}{L^{p}(\Omega)} = O\left(\abs{h_{1}}^{\alpha_{1}} \dots \abs{h_{d}}^{\alpha_{d}}\right) \text{ as } h \to 0\right\}.\]
	Remark that \(\HoldLipSpaceMultFD{\alpha_{1}, \dots, \alpha_{d}}{p} \subseteq \HoldLipSpaceMultFD{\alpha_{\text{min}}}{p}\) with \(\alpha_{\text{min}} := \min_{1 \leq j \leq d} \alpha_{j}\), because \(\abs{h}^{\alpha} \leq \abs{h}^{\beta}\) if \(0 < \beta \leq \alpha \leq 1\) for \(\abs{h} < 1\). Obviously, we also have \(\HoldLipSpaceMultFD{\alpha}{p} = \HoldLipSpaceMultFD{\alpha, \dots, \alpha}{p}\).
\end{remark}

We are now ready to prove the first Titchmarsh theorem for multiplicative H\"older-Lipschitz spaces. The proof ideas follow the same lines as those for \cref{th:firstTitchMultRd}.

\begin{theorem}\label{th:HoldLipMultFirstTitch}
	If $f \in \HoldLipSpaceMultFD{\alpha}{p}$ for some $0 < \alpha \leq 1$ and $1 < p \leq 2$, then
	$f \in \ell^{\gamma}(\dualLat{L})$ for
	\[\frac{p}{p + \alpha p - 1} < \gamma \leq \infty.\]
\end{theorem}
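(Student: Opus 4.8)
The plan is to mimic the proof of \cref{th:firstTitchMultRd}, replacing the multidimensional continuous Duren lemma by its lattice counterpart \cref{lemma:DurenLatMult}, exactly as \cref{th:firstTitchmarsh} mimics \cref{th:firstTitchRdAdd}. First I would compute the Fourier transform of $\diffOpForw_{1}^{h_1}\dots\diffOpForw_{d}^{h_d} f$. Since each coordinate-wise difference operator $\diffOpForw_{j}^{h_j}$ multiplies $\FT{f}(\kappa)$ by $e^{2\pi i \kappa_j h_j}-1 = 2i\sin(\pi\kappa_j h_j)e^{i\pi\kappa_j h_j}$, we get
\[
\FT{\diffOpForw_{1}^{h_1}\dots\diffOpForw_{d}^{h_d} f}(\kappa)
= \prod_{j=1}^{d} 2i\sin(\pi\kappa_j h_j)\,e^{i\pi\kappa_j h_j}\;\FT{f}(\kappa).
\]
Then, choosing $h_j := \frac{1}{2N_j}$ for $N_1,\dots,N_d\in\bN\setminus\{0\}$, one applies the Hausdorff–Young inequality \eqref{ineq:HY} together with the multiplicative Lipschitz hypothesis, and uses Jordan's inequality \eqref{ineq:JordanIneq} on the region $|\kappa_j|\le N_j$ (which is exactly where $|\pi\kappa_j h_j|\le \tfrac12$) to bound $|\sin(\pi\kappa_j h_j)|$ from below by $2|\kappa_j|/(2N_j)=|\kappa_j|/N_j$. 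This yields
\[
\sum_{|\kappa_1|\le N_1}\dots\sum_{|\kappa_d|\le N_d} |\kappa_1|^{p'}\dots|\kappa_d|^{p'}\,|\FT{f}(\kappa)|^{p'} = O\!\left(N_1^{(1-\alpha)p'}\dots N_d^{(1-\alpha)p'}\right).
\]

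Next I would interpolate down from exponent $p'$ to a general exponent $1\le\gamma<p'$ by Hölder's inequality with exponents $\tfrac{p'}{\gamma}$ and $1/(1-\tfrac{\gamma}{p'})$ in each block, together with the lattice counting bound $\sum_{|\kappa_j|\le N_j} 1 = O(N_j)$ (i.e.\ the number of lattice points in each "slab" grows linearly), giving
\[
\sum_{|\kappa_1|\le N_1}\dots\sum_{|\kappa_d|\le N_d} |\kappa_1|^{\gamma}\dots|\kappa_d|^{\gamma}\,|\FT{f}(\kappa)|^{\gamma}
= O\!\left(N_1^{(1-\alpha)\gamma + 1 - \frac{\gamma}{p'}}\dots N_d^{(1-\alpha)\gamma + 1 - \frac{\gamma}{p'}}\right).
\]
An application of \cref{lemma:DurenLatMult} (with all $\varepsilon_j$ starting at $1$ and ending at $0$, i.e.\ $a_j = (1-\alpha)\gamma + 1 - \tfrac{\gamma}{p'}$ and $b_j = \gamma$) then converts this to
\[
\sum_{|\kappa_1|>N_1}\dots\sum_{|\kappa_d|>N_d} |\FT{f}(\kappa)|^{\gamma} = O\!\left(N_1^{1-\alpha\gamma-\frac{\gamma}{p'}}\dots N_d^{1-\alpha\gamma-\frac{\gamma}{p'}}\right),
\]
valid under $0 < (1-\alpha)\gamma + 1 - \tfrac{\gamma}{p'} < \gamma$; the left inequality is automatic from $0<\alpha\le1$ and $1\le\gamma<p'$, and the right one rearranges to $\gamma > \frac{1}{\alpha+1/p'} = \frac{p}{p+\alpha p - 1}$. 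Taking all $N_j$ equal (or noting that the iterated tail sum controls $\sum_{\kappa\in\dualLat{L}}|\FT{f}(\kappa)|^{\gamma}$ once it is finite for one choice of thresholds) gives $\FT{f}\in\ell^{\gamma}(\dualLat{L})$ for these $\gamma$. The endpoint $\gamma=p'$ comes from Hausdorff–Young, and \cref{remark:lpEmbedding} extends the conclusion to all $\gamma\ge p'$, i.e.\ up to $\infty$.

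The one genuinely new point compared to \cref{th:firstTitchMultRd} is bookkeeping at the interface between the "product of coordinate slabs" indexing in \cref{lemma:DurenLatMult} and the fact that $\FT{f}$ lives on the whole lattice $\dualLat{L}=(\transp{A})^{-1}\bZ^d$, not on a literal Cartesian product $\bZ\times\dots\times\bZ$: the sums $\sum_{|\kappa_j|\le N_j}$ are over the projections of lattice points, and one must make sure the Fubini-type rearrangement and the counting estimate $\sum_{|\kappa_j|\le N_j}1 = O(N_j)$ are legitimate uniformly in the other coordinates. This is handled exactly as in the proof of \cref{lemma:DurenLatMult} itself, by slicing $\dualLat{L}$ along the first coordinate and applying the one-dimensional Duren lemma fiberwise with the remaining coordinates frozen; so I expect no real obstacle, only careful indexing. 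I would present the argument compactly, referencing \cref{th:firstTitchMultRd} for the structure and \cref{lemma:DurenLatMult} for the key conversion step.
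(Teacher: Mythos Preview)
Your proposal is correct and follows essentially the same approach as the paper: compute the Fourier transform of the iterated difference, apply Hausdorff--Young and Jordan's inequality to obtain the box estimate \eqref{eq:HLMFT1}, interpolate via H\"older with exponents $p'/\gamma$ and $1/(1-\gamma/p')$, invoke \cref{lemma:DurenLatMult}, and finish with \cref{remark:lpEmbedding}. Your closing remark about the lattice-versus-Cartesian-product bookkeeping is a legitimate point of care that the paper leaves implicit, but otherwise the arguments coincide step for step.
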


\begin{proof}
	Let \(f \in \HoldLipSpaceMultFD{\alpha}{p}\), and let \(N_{1}, \dots, N_{d} \in \bN\) be sufficiently large. First we compute for \(\kappa \in \dualLat{L}\) that
	\begin{align}\label{eq:FourierHoldLipMult}
		\FT{\diffOpForw_{1}^{\frac{1}{2N_{1}}} \dots \diffOpForw_{d}^{\frac{1}{2N_{d}}} f}(\kappa)
		&= \left(e^{2 \pi i \frac{\kappa_{1}}{2N_{1}}} - 1\right) \dots \left(e^{2 \pi i \frac{\kappa_{d}}{2N_{d}}} - 1\right) \FT{f}(\kappa) \nonumber \\
		&= 2 i \sin\left(\frac{\pi \kappa_{1}}{2N_{1}}\right) \, e^{i \pi \frac{\kappa_{1}}{2N_{1}}} \dots 2 i \sin\left(\frac{\pi \kappa_{d}}{2N_{d}}\right) \, e^{i \pi \frac{\kappa_{d}}{2N_{d}}} \FT{f}(\kappa).
	\end{align}
	By applying Jordan's inequality \eqref{ineq:JordanIneq} to \eqref{eq:FourierHoldLipMult} we obtain that
	\begin{align*}
		2^{d p'} \sum_{|\kappa_{1}| \leq N_{1}} \dots \sum_{|\kappa_{d}| \leq N_{d}} \abs{\frac{\kappa_{1}}{N_{1}}}^{p'} \dots \abs{\frac{\kappa_{d}}{N_{d}}}^{p'} |\FT{f}(\kappa)|^{p'}
		&\leq \sum_{|\kappa_{1}| \leq N_{1}} \dots \sum_{|\kappa_{d}| \leq N_{d}} \abs{\FT{\diffOpForw_{1}^{\frac{1}{2N_{1}}} \dots \diffOpForw_{d}^{\frac{1}{2N_{d}}} f}(\kappa)}^{p'} \\
		&\leq \normIn{\diffOpForw_{1}^{\frac{1}{2N_{1}}} \dots \diffOpForw_{d}^{\frac{1}{2N_{d}}} f}{L^{p}(\Omega)}^{p'} \\
		&= O\left(N_{1}^{-\alpha p'} \dots N_{d}^{-\alpha p'}\right),
	\end{align*}
	where we applied the Hausdorff-Young inequality \eqref{ineq:HY}.
	Thus, we have found the estimate
	\begin{equation}\label{eq:HLMFT1}
		\sum_{|\kappa_{1}| \leq N_{1}} \dots \sum_{|\kappa_{d}| \leq N_{d}} \big|\kappa_{1}\big|^{p'} \dots \big|\kappa_{d}\big|^{p'} |\FT{f}(\kappa)|^{p'} = O\left(N_{1}^{(1-\alpha)p'} \dots N_{d}^{(1-\alpha)p'}\right).
	\end{equation}
	Using H\"older's inequality with exponents $\frac{p'}{\gamma}$ and $1/(1 - \frac{\gamma}{p'})$ where $1 \leq \gamma < p'$, and the estimate
	\[\sum_{|\kappa_{1}| \leq N_{1}} \dots \sum_{|\kappa_{d}| \leq N_{d}} 1 = O(N_1 \dots N_d)\]
	we get
	\begin{multline*}
		\sum_{\abs{\kappa_{1}} \leq N_1} \dots \sum_{\abs{\kappa_{d}} \leq N_d} \big|\kappa_{1}\big|^{\gamma} \dots \big|\kappa_{d}\big|^{\gamma} |\FT{f}(\kappa)|^{\gamma} \\
		\leq \left( \sum_{|\kappa_{1}| \leq N_{1}} \dots \sum_{|\kappa_{d}| \leq N_{d}} \big|\kappa_{1}\big|^{p'} \dots \big|\kappa_{d}\big|^{p'} |\FT{f}(\kappa)|^{p'} \right)^{\frac{\gamma}{p'}} \left( \sum_{|\kappa_{1}| \leq N_{1}} \dots \sum_{|\kappa_{d}| \leq N_{d}} 1 \right)^{1 - \frac{\gamma}{p'}} \\
		= O\left(N_1^{(1 - \alpha) \gamma + 1 - \frac{\gamma}{p'}} \dots N_d^{(1 - \alpha) \gamma + 1 - \frac{\gamma}{p'}}\right).
	\end{multline*}
	Apply Duren's lemma \ref{lemma:DurenLatMult} to obtain that
	\[
		\sum_{|\kappa_{1}| > N_1} \dots \sum_{|\kappa_{d}| > N_{d}} |\FT{f}(\kappa)|^{\gamma} = O\left( N_{1}^{1 - \alpha \gamma - \frac{\gamma}{p'}} \dots N_{d}^{1 - \alpha \gamma - \frac{\gamma}{p'}} \right)
	\]
	under the condition that $0 < (1 - \alpha) \gamma + 1 - \frac{\gamma}{p'} < \gamma$. Note that the left inequality is satisfied, and the right one is equivalent to
	\[\frac{1}{\alpha + \frac{1}{p'}} = \frac{1}{\alpha + 1 - \frac{1}{p}} = \frac{p}{p + \alpha p - 1} < \gamma.\]
	Keeping \cref{remark:lpEmbedding} in mind, this completes the proof.
\end{proof}

\begin{remark}
	Since additive and multiplicative H\"older-Lipschitz spaces coincide in dimension $1$, \cref{remark:HoldLipFirstTitchSharp} also shows that the result in \cref{th:HoldLipMultFirstTitch} is sharp.
\end{remark}

Next, we treat the second Titchmarsh theorem for multiplicative H\"older-Lipschitz functions. For full generality of this result we consider the multiplicative H\"older-Lipschitz spaces $\HoldLipSpaceMultFD{\alpha_{1}, \dots, \alpha_{d}}{p}$ from \cref{rem:HoldLipMultAlt}. The inspiration for our proof stems from the reasoning in the proof of \cite[Theorem 2.19]{LipThesis}.

\begin{theorem}\label{th:secondTitchMult}
    Let \(0 < \alpha_{1}, \dots, \alpha_{d} < 1\) and \(f \in L^{2}(\Omega)\). Then \(f \in \HoldLipSpaceMultFD{\alpha_{1}, \dots, \alpha_{d}}{2}\) if and only if any estimate of the following form holds:
    \begin{equation}\label{eq:secondTitchmarshMult}
        \sum_{\kappa_{1} \in I_{\varepsilon_{1}}(N_{1})} \dots \sum_{\kappa_{d} \in I_{\varepsilon_{d}}(N_{d})} \big|\kappa_{1}\big|^{2\varepsilon_{1}} \dots \big|\kappa_{d}\big|^{2\varepsilon_{d}} |\FT{f}(\kappa)|^{2} = O\left(N_{1}^{-2\alpha_{1} + 2\varepsilon_{1}} \dots N_{d}^{-2\alpha_{d} + 2\varepsilon_{d}}\right),
    \end{equation}
    where \(\varepsilon_{1}, \dots, \varepsilon_{d} \in \{0,1\}\), \(I_{1}(N) := [-N,N]\) and \(I_{0}(N) := \bR \setminus [-N,N]\) for \(N \in \bN\).
    
    In particular, we have that \(f \in \HoldLipSpaceMultFD{\alpha_{1}, \dots, \alpha_{d}}{2}\) if and only if
    \[\sum_{\abs{\kappa_{1}} > N_{1}} \dots \sum_{\abs{\kappa_{d}} > N_{d}} |\FT{f}(\kappa)|^{2} = O\left(N_{1}^{-2\alpha_{1}} \dots N_{d}^{-2\alpha_{d}}\right).\]
\end{theorem}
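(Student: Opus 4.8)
The plan is to mirror the proof of \cref{th:secondTitchmarsh}, combining Plancherel's formula \eqref{eq:Plancherel}, Jordan's inequality \eqref{ineq:JordanIneq}, and the multiplicative Duren lemma \cref{lemma:DurenLatMult}. The central observation is that \cref{lemma:DurenLatMult}, applied to the non-negative sequence $c_{\kappa} := \abs{\FT{f}(\kappa)}^{2}$ on the lattice $\dualLat{L}$ with $b_{n} = 2$ and $a_{n} = 2 - 2\alpha_{n}$, shows that all the estimates \eqref{eq:secondTitchmarshMult} (one for each $\varepsilon \in \{0,1\}^{d}$) are equivalent; here the hypothesis $0 < a_{n} < b_{n}$ of that lemma is precisely the assumption $0 < \alpha_{n} < 1$. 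Hence it suffices to prove that $f \in \HoldLipSpaceMultFD{\alpha_{1},\dots,\alpha_{d}}{2}$ is equivalent to the single estimate \eqref{eq:secondTitchmarshMult} with $\varepsilon_{1} = \dots = \varepsilon_{d} = 1$, namely
\[\sum_{\abs{\kappa_{1}} \leq N_{1}} \dots \sum_{\abs{\kappa_{d}} \leq N_{d}} \abs{\kappa_{1}}^{2} \dots \abs{\kappa_{d}}^{2} \, \abs{\FT{f}(\kappa)}^{2} = O\left(N_{1}^{2-2\alpha_{1}} \dots N_{d}^{2-2\alpha_{d}}\right).\]

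For the forward implication I would argue as in \cref{th:HoldLipMultFirstTitch}, specialised to the endpoint $p = p' = 2$. From \eqref{eq:FourierHoldLipMult} one has $\FT{\diffOpForw_{1}^{\frac{1}{2N_{1}}} \dots \diffOpForw_{d}^{\frac{1}{2N_{d}}} f}(\kappa) = \prod_{j=1}^{d} 2 i \sin(\pi\kappa_{j}/(2N_{j})) \, e^{i\pi\kappa_{j}/(2N_{j})} \FT{f}(\kappa)$; restricting Plancherel's identity \eqref{eq:Plancherel} to the box with $\abs{\kappa_{j}} \leq N_{j}$ for all $j$, and applying the lower bound $\abs{\sin(\pi\kappa_{j}/(2N_{j}))} \geq \abs{\kappa_{j}}/N_{j}$ of Jordan's inequality \eqref{ineq:JordanIneq} (available since $\abs{\kappa_{j}/(2N_{j})} \leq 1/2$ there), we obtain
\[4^{d} \Big(\prod_{j=1}^{d} N_{j}^{-2}\Big) \sum_{\abs{\kappa_{1}} \leq N_{1}} \dots \sum_{\abs{\kappa_{d}} \leq N_{d}} \abs{\kappa_{1}}^{2} \dots \abs{\kappa_{d}}^{2} \, \abs{\FT{f}(\kappa)}^{2} \leq \normIn{\diffOpForw_{1}^{\frac{1}{2N_{1}}} \dots \diffOpForw_{d}^{\frac{1}{2N_{d}}} f}{L^{2}(\Omega)}^{2} = O\Big(\prod_{j=1}^{d} N_{j}^{-2\alpha_{j}}\Big),\]
which is exactly the desired estimate with all $\varepsilon_{j} = 1$.

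For the converse I would fix $h$ with every $\abs{h_{j}}$ small enough that $N_{j} := \floor{\frac{1}{\abs{h_{j}}}} \geq 1$, and expand Plancherel's identity as $\normIn{\diffOpForw_{1}^{h_{1}} \dots \diffOpForw_{d}^{h_{d}} f}{L^{2}(\Omega)}^{2} = 4^{d} \sum_{\kappa \in \dualLat{L}} \big(\prod_{j=1}^{d} \sin^{2}(\pi\kappa_{j}h_{j})\big) \abs{\FT{f}(\kappa)}^{2}$. Splitting, coordinate by coordinate, the sum over $\dualLat{L}$ according to whether $\abs{\kappa_{j}} \leq N_{j}$ or $\abs{\kappa_{j}} > N_{j}$ produces $2^{d}$ pieces, indexed by the subset $S \subseteq \{1,\dots,d\}$ of coordinates on which $\abs{\kappa_{j}} \leq N_{j}$. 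On the $S$-piece I bound $\sin^{2}(\pi\kappa_{j}h_{j}) \leq \pi^{2}\kappa_{j}^{2}h_{j}^{2}$ for $j \in S$ and $\sin^{2}(\pi\kappa_{j}h_{j}) \leq 1$ for $j \notin S$; what remains is $\prod_{j \in S}(\pi^{2}h_{j}^{2})$ times an instance of \eqref{eq:secondTitchmarshMult} with $\varepsilon_{j} = 1$ for $j \in S$ and $\varepsilon_{j} = 0$ otherwise, hence is $O\big(\prod_{j \in S} N_{j}^{2-2\alpha_{j}} \prod_{j \notin S} N_{j}^{-2\alpha_{j}}\big)$ by the Duren equivalence already established. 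Using $\frac{1}{2\abs{h_{j}}} \leq N_{j} \leq \frac{1}{\abs{h_{j}}}$ together with $0 < \alpha_{j} < 1$, each $S$-piece is $O\big(\prod_{j=1}^{d} \abs{h_{j}}^{2\alpha_{j}}\big)$, so the full sum is as well, giving $f \in \HoldLipSpaceMultFD{\alpha_{1},\dots,\alpha_{d}}{2}$; the ``in particular'' statement is the case $\varepsilon = 0$ of \eqref{eq:secondTitchmarshMult}.

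The two appeals to Plancherel and the manipulations with Jordan's inequality are routine; the step that needs care is the converse, where one must pair each of the $2^{d}$ pieces of the split sum with the correct instance of \eqref{eq:secondTitchmarshMult} and verify that the powers of $N_{j}$ and $\abs{h_{j}}$ combine into $\prod_{j} \abs{h_{j}}^{2\alpha_{j}}$ — this is the multi-coordinate analogue of the single split ($\abs{\kappa} \leq N_{h}$ versus $\abs{\kappa} > N_{h}$) in the proof of \cref{th:secondTitchmarsh}. I do not expect a genuine obstacle beyond this bookkeeping, since $0 < \alpha_{j} < 1$ simultaneously makes the Lipschitz decay nontrivial and guarantees that \cref{lemma:DurenLatMult} applies.
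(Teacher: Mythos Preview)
Your proposal is correct and follows essentially the same route as the paper: both directions hinge on Plancherel, Jordan's inequality, and the multiplicative Duren lemma \cref{lemma:DurenLatMult}, with the converse carried out by splitting the sum over $\dualLat{L}$ into $2^{d}$ pieces and matching each to the appropriate instance of \eqref{eq:secondTitchmarshMult}. The only minor difference is that the paper verifies the converse for the special increments $h_{j} = 1/(2N_{j})$ with $N_{j}$ large, whereas you treat general $h$ via $N_{j} = \lfloor 1/\abs{h_{j}} \rfloor$; your version is slightly more complete in this respect, but the argument is otherwise the same.
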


\begin{proof}
    Suppose first that \(f \in \HoldLipSpaceMultFD{\alpha_1, \dots, \alpha_d}{2}\). Following the steps in \cref{th:HoldLipMultFirstTitch}, we find that \eqref{eq:HLMFT1} for $p = p' = 2$ becomes
    \begin{equation}\label{ineq:secondTitchmarshMult1}
    	\sum_{|\kappa_{1}| \leq N_{1}} \dots \sum_{|\kappa_{d}| \leq N_{d}} \big|\kappa_{1}\big|^{2} \dots \big|\kappa_{d}\big|^{2} |\FT{f}(\kappa)|^{2}
    	= O\left(N_{1}^{2(1-\alpha_1)} \dots N_{d}^{2(1-\alpha_d)}\right).
    \end{equation}
    It follows now from Duren's lemma \ref{lemma:DurenLatMult} that \eqref{ineq:secondTitchmarshMult1} is equivalent with any estimate of the form \eqref{eq:secondTitchmarshMult}. Note that the requirement $0 < 2(1 - \alpha_j) < 2$ for every $1 \leq j \leq d$ for Duren's lemma is satisfied since we assumed that $0 < \alpha_{j} < 1$ for all $j$.
    
    Conversely, assume that any estimate of the form \eqref{eq:secondTitchmarshMult} holds. By Duren's lemma \ref{lemma:DurenLatMult}, all estimates of this form hold. Let $N_{1}, \dots, N_{d} \in \bN$ be sufficiently large. Plancherel's theorem \eqref{eq:Plancherel} gives
    \[\normIn{\diffOpForw_{1}^{\frac{1}{2N_1}} \dots \diffOpForw_{d}^{\frac{1}{2N_d}} f}{L^2(\Omega)}^2
    = 2^{2d} \sum_{\kappa \in \dualLat{L}} \sin^2\left(\frac{\pi \kappa_1}{2N_1}\right) \dots \sin^2\left(\frac{\pi \kappa_d}{2N_d}\right) |\FT{f}(\kappa)|^2.\]
    We observe that
    \begin{multline*}
    	\sum_{\kappa \in \dualLat{L}} \sin^{2}\left(\frac{\pi \kappa_{1}}{2N_{1}}\right) \dots \sin^{2}\left(\frac{\pi \kappa_{d}}{2N_{d}}\right) |\FT{f}(\kappa)|^{2} \\
    	= \sum_{\varepsilon \in \{0,1\}^{d}} \sum_{\kappa_{1} \in I_{\varepsilon_{1}}(N_{1})} \dots \sum_{\kappa_{d} \in I_{\varepsilon_{d}}(N_{d})} \sin^{2}\left(\frac{\pi \kappa_{1}}{2N_{1}}\right) \dots \sin^{2}\left(\frac{\pi \kappa_{d}}{2N_{d}}\right) |\FT{f}(\kappa)|^{2}.
    \end{multline*}
    Using Jordan's inequality \eqref{ineq:JordanIneq} when applicable, or otherwise the estimate \(\sin^{2}\left(\frac{\pi \kappa_{j}}{2N_{j}}\right) \leq 1\), we find
    \begin{multline*}
        \sum_{\varepsilon \in \{0,1\}^{d}} \sum_{\kappa_{1} \in I_{\varepsilon_{1}}(N_{1})} \dots \sum_{\kappa_{d} \in I_{\varepsilon_{d}}(N_{d})} \sin^{2}\left(\frac{\pi \kappa_{1}}{2N_{1}}\right) \dots \sin^{2}\left(\frac{\pi \kappa_{d}}{2N_{d}}\right) |\FT{f}(\kappa)|^{2} \\
        \leq \sum_{\varepsilon \in \{0,1\}^{d}} \sum_{\kappa_{1} \in I_{\varepsilon_{1}}(N_{1})} \dots \sum_{\kappa_{d} \in I_{\varepsilon_{d}}(N_{d})} \left(\frac{\pi \kappa_{1}}{2N_{1}}\right)^{2\varepsilon_{1}} \dots \left(\frac{\pi \kappa_{d}}{2N_{d}}\right)^{2\varepsilon_{d}} |\FT{f}(\kappa)|^{2} \\
        = \sum_{\varepsilon \in \{0,1\}^{d}} O\left( N_{1}^{2 \varepsilon_{1} - 2 \alpha_{1} - 2 \varepsilon_{1}} \dots N_{d}^{2 \varepsilon_{d} - 2 \alpha_{d} - 2 \varepsilon_{d}} \right)
        = O\left(N_{1}^{-2 \alpha_{1}} \dots N_{d}^{-2 \alpha_{d}}\right),
    \end{multline*}
    which is the required estimate.
\end{proof}

\section{Boundedness of Fourier multipliers on H\"older-Lipschitz spaces}
\label{s:FourierMult}

Boundedness results for Fourier multipliers on H\"older-Lipschitz spaces derived from the second Titchmarsh theorem with respect to a norm related to the asymptotic estimate in the said theorem will be presented. Firstly, the result for additive H\"older-Lipschitz spaces is considered, and subsequently applied to prove Lipschitz-Sobolev regularity for Bessel potential operators. This is followed by a detailed comparison of the introduced norm with another one that appears in the literature. Secondly, the case of multiplicative H\"older-Lipschitz spaces is concisely discussed.

As mentioned, the second Titchmarsh theorem characterizes additive H\"older-Lipschitz functions as $L^2(\Omega)$-functions satisfying the asymptotic estimate $\sum_{\abs{\kappa} > N} |\FT{f}(\kappa)|^2 = O\left(N^{-2\alpha}\right)$. This estimate hints to a norm on $\HoldLipSpaceFD{\alpha}{2}$.

\begin{definition}
	Let $0 < \alpha < 1$. We define the norm \(\normIn{\cdot}{\HoldLipSpaceFD{\alpha}{2}}\) on the additive H\"older-Lipschitz space \(\HoldLipSpaceFD{\alpha}{2}\) by
	\[\normIn{f}{\HoldLipSpaceFD{\alpha}{2}}
	:= \normIn{f}{L^2(\Omega)} + \sup_{N \in \bN} N^{\alpha} \left(\sum_{\abs{\kappa} > N} |\FT{f}(\kappa)|^{2}\right)^{\frac{1}{2}}.\]
\end{definition}

We now discuss a boundedness result for Fourier multipliers on (additive) H\"older-Lipschitz spaces with respect to the newly introduced norm under a growth condition on the symbol of the Fourier multiplier.

\begin{theorem}\label{th:FM}
	Let \(\gamma \in \bR\) with \(0 \leq \gamma < 1\). Suppose that for some $C > 0$ the function \(\sigma : \dualLat{L} \to \bC\) satisfies the growth estimate
	\[\abs{\sigma(\kappa)} \leq C \langle \kappa \rangle^{-\gamma} \quad \text{with} \quad \langle \kappa \rangle := (1+\abs{\kappa}^{2})^{\frac{1}{2}}.\]
	Let \(T\) be the Fourier multiplier with symbol \(\sigma\), i.e., \(\FT{Tf}(\kappa) = \sigma(\kappa) \FT{f}(\kappa)\) for all \(\kappa \in \dualLat{L}\). Then
	\[T : \HoldLipSpaceFD{\alpha}{2} \to \HoldLipSpaceFD{\alpha + \gamma}{2}\]
	is bounded for every \(\alpha \in \bR\) with \(0 < \alpha < 1 - \gamma\).
\end{theorem}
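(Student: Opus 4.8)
The plan is to verify the norm inequality $\normIn{Tf}{\HoldLipSpaceFD{\alpha+\gamma}{2}} \lesssim \normIn{f}{\HoldLipSpaceFD{\alpha}{2}}$ directly from the definition of the two norms, for an arbitrary $f \in \HoldLipSpaceFD{\alpha}{2}$. First note that the target space is meaningful: since $\gamma \ge 0$ and $0 < \alpha < 1-\gamma$, we have $0 < \alpha+\gamma < 1$. The norm $\normIn{Tf}{\HoldLipSpaceFD{\alpha+\gamma}{2}}$ splits into the $L^2$-part and the weighted supremum of tails of $\FT{Tf} = \sigma\,\FT{f}$, and I would estimate each separately. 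For the $L^2$-part, Plancherel's formula \eqref{eq:Plancherel} together with $\abs{\sigma(\kappa)} \le C\langle\kappa\rangle^{-\gamma} \le C$ (using $\langle\kappa\rangle \ge 1$ and $\gamma \ge 0$) gives at once $\normIn{Tf}{L^2(\Omega)}^2 = \sum_{\kappa\in\dualLat{L}}\abs{\sigma(\kappa)}^2\abs{\FT{f}(\kappa)}^2 \le C^2\normIn{f}{L^2(\Omega)}^2$.

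The substantive step is to show that for every $N \in \bN$,
\[\sum_{\abs{\kappa}>N}\abs{\sigma(\kappa)}^2\,\abs{\FT{f}(\kappa)}^2 \lesssim N^{-2(\alpha+\gamma)}\,\normIn{f}{\HoldLipSpaceFD{\alpha}{2}}^2.\]
For $N = 0$ the prefactor $N^{\alpha+\gamma}$ appearing in the norm vanishes, so this case is void; for $N \ge 1$ I would bound $\abs{\sigma(\kappa)}^2 \le C^2\langle\kappa\rangle^{-2\gamma} \le C^2\abs{\kappa}^{-2\gamma}$ (valid since $\kappa \ne 0$ throughout the range of summation), reducing matters to $\sum_{\abs{\kappa}>N}\abs{\kappa}^{-2\gamma}\abs{\FT{f}(\kappa)}^2 \lesssim N^{-2(\alpha+\gamma)}\normIn{f}{\HoldLipSpaceFD{\alpha}{2}}^2$. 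The definition of $\normIn{\cdot}{\HoldLipSpaceFD{\alpha}{2}}$ (equivalently, \cref{th:secondTitchmarsh}) yields the tail decay $\sum_{\abs{\kappa}>M}\abs{\FT{f}(\kappa)}^2 \le \normIn{f}{\HoldLipSpaceFD{\alpha}{2}}^2\,M^{-2\alpha}$ for all $M \ge 1$. To transfer this decay through the weight $\abs{\kappa}^{-2\gamma}$ without losing in the exponent, I would decompose $\{\abs{\kappa}>N\}$ into the dyadic annuli $2^jN < \abs{\kappa} \le 2^{j+1}N$, $j \ge 0$; on the $j$-th annulus $\abs{\kappa}^{-2\gamma} \le (2^jN)^{-2\gamma}$ while the corresponding partial sum of $\abs{\FT{f}(\kappa)}^2$ is bounded by $\normIn{f}{\HoldLipSpaceFD{\alpha}{2}}^2(2^jN)^{-2\alpha}$, so summing over $j$ produces the factor $\sum_{j\ge0}2^{-2j(\alpha+\gamma)}$, which converges precisely because $\alpha+\gamma>0$. (Alternatively, for $\gamma>0$ one may write $\abs{\kappa}^{-2\gamma} = 2\gamma\int_{\abs{\kappa}}^\infty t^{-2\gamma-1}\,\mathrm{d}t$, apply Tonelli, and estimate the inner partial sum crudely by the full tail at level $N$; this already gives the sharp exponent.)

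Combining the two pieces, $\sup_{N\in\bN}N^{\alpha+\gamma}\big(\sum_{\abs{\kappa}>N}\abs{\sigma(\kappa)}^2\abs{\FT{f}(\kappa)}^2\big)^{1/2} \lesssim \normIn{f}{\HoldLipSpaceFD{\alpha}{2}}$, so $Tf \in L^2(\Omega)$ has finite $\HoldLipSpaceFD{\alpha+\gamma}{2}$-norm; by \cref{th:secondTitchmarsh} this forces $Tf \in \HoldLipSpaceFD{\alpha+\gamma}{2}$, and the displayed estimates together give the claimed operator bound. The only point needing genuine attention is the weighted tail estimate — specifically ensuring the dyadic (or integral) summation does not degrade the exponent, which is exactly where $\gamma\ge0$ and $\alpha>0$ enter; the hypothesis $\alpha+\gamma<1$ is used only to make the target Hölder-Lipschitz norm well defined.
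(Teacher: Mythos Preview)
Your proof is correct and follows the same overall architecture as the paper's: split the $\HoldLipSpaceFD{\alpha+\gamma}{2}$-norm into its $L^2$-part and its tail part, handle the $L^2$-part via Plancherel and $\abs{\sigma}\le C$, and handle the tail part via the growth bound on $\sigma$ together with the tail decay furnished by \cref{th:secondTitchmarsh}.

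The one genuine difference is in how you treat the weighted tail $\sum_{\abs{\kappa}>N}\abs{\kappa}^{-2\gamma}\abs{\FT{f}(\kappa)}^2$. You invoke a dyadic decomposition into annuli $2^jN<\abs{\kappa}\le 2^{j+1}N$ and then sum a geometric series; the paper instead observes that the weight $\langle\kappa\rangle^{-2\gamma}$ is monotone decreasing in $\abs{\kappa}$, so on the entire range $\abs{\kappa}>N$ one has the single pointwise bound $\langle\kappa\rangle^{-2\gamma}\le N^{-2\gamma}$, whence
\[
\sum_{\abs{\kappa}>N}\langle\kappa\rangle^{-2\gamma}\,|\FT{f}(\kappa)|^2 \le N^{-2\gamma}\sum_{\abs{\kappa}>N}|\FT{f}(\kappa)|^2
\]
in one line. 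Your dyadic argument (and your alternative integral representation) are perfectly valid and would be the right tool if the weight were not monotone or if one needed to extract decay from both the weight and the tail simultaneously, but here the monotonicity makes the decomposition unnecessary. The paper's route also makes the operator constant slightly more explicit: one gets $\max\{\normIn{\sigma}{\ell^\infty},C\}$ rather than a constant inflated by the geometric series factor $\sum_{j\ge0}2^{-2j(\alpha+\gamma)}$.
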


\begin{proof}
	Let \(f \in \HoldLipSpace{\Omega}{\alpha}{2}\). By \cref{th:secondTitchmarsh} we have
	\begin{align}
		\sum_{\abs{\kappa} > N} \abs{\FT{Tf}(\kappa)}^{2} = \sum_{\abs{\kappa} > N} \abs{\sigma(\kappa)}^{2} |\FT{f}(\kappa)|^{2}
		&\leq C^2 \sum_{\abs{\kappa} > N} \langle \kappa \rangle^{-2\gamma} |\FT{f}(\kappa)|^{2} \nonumber \\
		&\leq C^2 N^{-2\gamma} \sum_{\abs{\kappa} > N} |\FT{f}(\kappa)|^{2} \label{eq:HoldLipFM1} \\
		&= O\left(N^{-2(\alpha + \gamma)}\right), \label{eq:HoldLipFM2}
	\end{align}
	where we used that $\abs{\kappa} > N$ implied that
	\[\langle \kappa \rangle^{-2\gamma} = \left(\frac{1}{1+\abs{\kappa}^{2}}\right)^{\gamma} \leq \left(\frac{1}{1 + N^2}\right)^{\gamma} \leq N^{-2\gamma}.\]
	By \cref{th:secondTitchmarsh}, we find from \eqref{eq:HoldLipFM2} that \(T(\HoldLipSpace{\Omega}{\alpha}{2}) \subseteq \HoldLipSpace{\Omega}{\alpha+\gamma}{2}\) for all \(\alpha > 0\) with \(\alpha + \gamma < 1\). Furthermore, \(T : L^{2}(\Omega) \to L^{2}(\Omega)\) is bounded because the symbol \(\sigma\) is bounded, since Plancherel's theorem implies that
	\begin{equation}\label{eq:HoldLipFM3}
		\normIn{Tf}{L^2(\Omega)}
		= \lVert \FT{Tf} \rVert_{\ell^2(\dualLat{L})}
		= \lVert \sigma \FT{f} \rVert_{\ell^2(\dualLat{L})}
		\leq \normIn{\sigma}{\ell^{\infty}(\dualLat{L})} \lVert \FT{f} \rVert_{\ell^2(\dualLat{L})}
		= \normIn{\sigma}{\ell^{\infty}(\dualLat{L})} \normIn{f}{L^2(\Omega)}.
	\end{equation}
	Thus, we find using \eqref{eq:HoldLipFM1} and \eqref{eq:HoldLipFM3} that
	\begin{align*}
		\normIn{Tf}{\HoldLipSpaceFD{\alpha + \gamma}{2}}
		&= \normIn{Tf}{L^2(\Omega)} + \sup_{N \in \bN} N^{\alpha + \gamma} \left( \sum_{\abs{\kappa} > N} \abs{\FT{Tf}(\kappa)}^2 \right)^{\frac{1}{2}} \\
		&\leq \normIn{\sigma}{\ell^{\infty}(\dualLat{L})} \normIn{f}{L^2(\Omega)} + C \sup_{N \in \bN} N^{\alpha} \left( \sum_{\abs{\kappa} > N} |\FT{f}(\kappa)|^2 \right)^{\frac{1}{2}} \\
		&\leq \max\left\{ \normIn{\sigma}{\ell^{\infty}(\dualLat{L})}, C \right\} \normIn{f}{\HoldLipSpaceFD{\alpha}{2}},
	\end{align*}
	which shows that \(T : \HoldLipSpace{\Omega}{\alpha}{2} \to \HoldLipSpace{\Omega}{\alpha+\gamma}{2}\) is bounded.
\end{proof}

As an application of \cref{th:FM} we deduce Lipschitz-Sobolev regularity for Bessel potential operators on fundamental domains of lattices. As in \cite{LpLqBoundedness}, we denote by $\Delta_{\Omega}$ the Fourier multiplier with symbol $-4 \pi^2 \abs{\kappa}^2$.

\begin{corollary}
	For every \(0 \leq \gamma < 1\) and \(0 < \alpha < 1 - \gamma\) we have the continuous embedding
	\[H^{\gamma} \HoldLipSpaceFD{\alpha}{2} \hookrightarrow \HoldLipSpaceFD{\alpha+\gamma}{2},\]
	where
	\[H^{\gamma} \HoldLipSpaceFD{\alpha}{2} := \left\{f \in \HoldLipSpaceFD{\alpha}{2} : (I-\Delta_{\Omega})^{\gamma/2} f \in \HoldLipSpaceFD{\alpha}{2}\right\}\]
	with norm
	\[\normIn{f}{H^{\gamma} \HoldLipSpaceFD{\alpha}{2}} := \normIn{(I-\Delta_{\Omega})^{\frac{\gamma}{2}} f}{\HoldLipSpaceFD{\alpha}{2}}.\]
\end{corollary}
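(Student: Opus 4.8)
The plan is to read this corollary as a direct application of \cref{th:FM} to the inverse Bessel potential operator $(I-\Delta_\Omega)^{-\gamma/2}$. First I would dispose of the trivial case $\gamma=0$, in which $H^{0}\HoldLipSpaceFD{\alpha}{2}$ is by definition $\HoldLipSpaceFD{\alpha}{2}$ itself and the embedding is the identity. For $0<\gamma<1$, I would identify $(I-\Delta_\Omega)^{-\gamma/2}$ as the Fourier multiplier with symbol $\sigma(\kappa)=(1+4\pi^{2}\abs{\kappa}^{2})^{-\gamma/2}$, using that $\Delta_\Omega$ is the multiplier with symbol $-4\pi^{2}\abs{\kappa}^{2}$.

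The key step is to check that $\sigma$ meets the growth hypothesis of \cref{th:FM}, i.e.\ $\abs{\sigma(\kappa)}\le C\langle\kappa\rangle^{-\gamma}$. Since $4\pi^{2}>1$ we have $1+4\pi^{2}\abs{\kappa}^{2}\ge 1+\abs{\kappa}^{2}=\langle\kappa\rangle^{2}$, so the estimate holds with $C=1$; moreover $\normIn{\sigma}{\ell^{\infty}(\dualLat{L})}=1$, so the operator-norm bound extracted from the proof of \cref{th:FM} is $1$. Thus \cref{th:FM} gives that $(I-\Delta_\Omega)^{-\gamma/2}\colon \HoldLipSpaceFD{\alpha}{2}\to\HoldLipSpaceFD{\alpha+\gamma}{2}$ is bounded for every $0<\alpha<1-\gamma$ — and this is exactly the range making both $\HoldLipSpaceFD{\alpha}{2}$ and $\HoldLipSpaceFD{\alpha+\gamma}{2}$ well defined.

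Finally I would conclude: given $f\in H^{\gamma}\HoldLipSpaceFD{\alpha}{2}$, set $g:=(I-\Delta_\Omega)^{\gamma/2}f$, which lies in $\HoldLipSpaceFD{\alpha}{2}$ by the very definition of the norm on $H^{\gamma}\HoldLipSpaceFD{\alpha}{2}$. Because the symbols of $(I-\Delta_\Omega)^{\gamma/2}$ and $(I-\Delta_\Omega)^{-\gamma/2}$ are mutually reciprocal on $\dualLat{L}$, applying $(I-\Delta_\Omega)^{-\gamma/2}$ to $g$ recovers $f$; hence $f\in\HoldLipSpaceFD{\alpha+\gamma}{2}$ with $\normIn{f}{\HoldLipSpaceFD{\alpha+\gamma}{2}}\le\normIn{g}{\HoldLipSpaceFD{\alpha}{2}}=\normIn{f}{H^{\gamma}\HoldLipSpaceFD{\alpha}{2}}$, which is the asserted continuous embedding. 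I do not expect a real obstacle here; the only points needing a little care are the elementary comparison $1+4\pi^{2}\abs{\kappa}^{2}\ge\langle\kappa\rangle^{2}$ that makes the Bessel symbol fit the hypothesis of \cref{th:FM} with precisely the power $\langle\kappa\rangle^{-\gamma}$, and keeping straight which of $(I-\Delta_\Omega)^{\pm\gamma/2}$ inverts the other.
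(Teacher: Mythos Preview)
Your proposal is correct and follows essentially the same route as the paper: apply \cref{th:FM} to the Fourier multiplier $(I-\Delta_\Omega)^{-\gamma/2}$ and then substitute $g=(I-\Delta_\Omega)^{\gamma/2}f$ to obtain the embedding inequality. You supply a bit more detail than the paper does (the explicit verification that $(1+4\pi^{2}\abs{\kappa}^{2})^{-\gamma/2}\le\langle\kappa\rangle^{-\gamma}$, the trivial case $\gamma=0$, and the observation that the resulting constant is $1$), but the argument is the same.
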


\begin{proof}
	Let $0 \leq \gamma < 1$ and $0 < \alpha < 1 - \gamma$ be arbitrary. Applying \cref{th:FM} to the Fourier multiplier \((I-\Delta_{\Omega})^{-\gamma/2}\), we obtain that there exists a constant $C > 0$ such that for $g \in \HoldLipSpaceFD{\alpha}{2}$,
	\[\normIn{(I-\Delta_{\Omega})^{-\frac{\gamma}{2}}g}{\HoldLipSpaceFD{\alpha+\gamma}{2}} \leq C \normIn{g}{\HoldLipSpaceFD{\alpha}{2}}.\]
	Substituting \((I-\Delta_{\Omega})^{\frac{\gamma}{2}} f\) with $f \in \HoldLipSpaceFD{\alpha + \gamma}{2}$ for \(g\), we obtain
	\[\normIn{f}{\HoldLipSpaceFD{\alpha+\gamma}{2}} \leq C \normIn{(I-\Delta_{\Omega})^{\frac{\gamma}{2}}f}{\HoldLipSpaceFD{\alpha}{2}},\]
	which yields the result.
\end{proof}

Let us now turn to a discussion of the norm $\normIn{\cdot}{\HoldLipSpaceFD{\alpha}{2}}$. In some literature, such as in \cite{LipThesis} and \cite{LipArticle}, the (additive) H\"older-Lipschitz space $\HoldLipSpaceFD{\alpha}{p}$ with $0 < \alpha \leq 1$ and $1 < p \leq \infty$ is endowed with the norm
\[\normIn{f}{\HoldLipSpaceFD{\alpha}{p}}' := \normIn{f}{L^{p}(\Omega)} + \sup_{\abs{h} \neq 0} \abs{h}^{-\alpha} \normIn{f(\cdot+h)-f(\cdot)}{L^{p}(\Omega)},\]
which turns $\HoldLipSpaceFD{\alpha}{p}$ into a Banach space. Our goal is to show that in the case $0 < \alpha < 1$ and $p=2$ this norm is actually equivalent with our norm $\normIn{\cdot}{\HoldLipSpaceFD{\alpha}{2}}$. To this end we need a stronger formulation of the one-dimensional case of Duren's lemma \ref{lemma:Duren}.

To set the stage for the refined formulation of Duren's lemma, let us first recall the summation by parts formula.

\begin{lemma}[Summation by parts]\label{lemma:SBP}
	Suppose \(f : \bN \to \bC\) and \(g : \bN \to \bC\). Define the forward difference operator $\diffOpForw$ by $\diffOpForw f(n) := f(n+1) - f(n)$, and define the backward difference operator $\diffOpBack$ by $\diffOpBack f(n) := f(n) - f(n-1)$.
	Then, for all \(N, M \in \bN\) with $1 \leq N < M$,
	\[
	\sum_{k=N}^{M} f(k) \, \diffOpBack g(k) = f(M) g(M) - f(N) g(N-1) - \sum_{k=N}^{M-1} \diffOpForw f(k) \, g(k).
	\]
\end{lemma}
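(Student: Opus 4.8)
The statement to prove is the summation by parts formula (Lemma~\ref{lemma:SBP}), so the plan is straightforward and essentially computational; the main work is bookkeeping of indices rather than any conceptual obstacle.

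\medskip

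\noindent\emph{Approach.} The plan is to prove the identity by a direct manipulation of the finite sum, rewriting $\diffOpBack g(k) = g(k) - g(k-1)$ and splitting into two sums, then reindexing one of them so that the two telescope against each other up to the boundary terms. First I would write
\[
\sum_{k=N}^{M} f(k)\,\diffOpBack g(k) = \sum_{k=N}^{M} f(k)\,g(k) - \sum_{k=N}^{M} f(k)\,g(k-1).
\]
In the second sum I would substitute $j = k-1$, turning it into $\sum_{j=N-1}^{M-1} f(j+1)\,g(j)$. Next I would peel off the top term $k=M$ from the first sum and the bottom term $j = N-1$ from the (reindexed) second sum, so that both remaining sums run over the common range $N \le k \le M-1$. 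Combining them gives $\sum_{k=N}^{M-1}\bigl(f(k) - f(k+1)\bigr)g(k) = -\sum_{k=N}^{M-1}\diffOpForw f(k)\,g(k)$, while the leftover boundary terms are exactly $f(M)g(M)$ and $-f(N-1+1)g(N-1) = -f(N)g(N-1)$, which matches the claimed right-hand side.

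\medskip

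\noindent\emph{Key steps in order.} (1) Expand $\diffOpBack g(k)$ and split the sum into two. (2) Reindex the second sum by shifting the summation variable down by one. (3) Extract the single extremal term from each sum so the ranges align on $\{N,\dots,M-1\}$. (4) Recombine the aligned sums into a single sum of $-\diffOpForw f(k)\,g(k)$ and collect the boundary contributions. (5) Read off that the boundary contributions are precisely $f(M)g(M) - f(N)g(N-1)$, completing the proof.

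\medskip

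\noindent\emph{Main obstacle.} There is no genuine difficulty here; the only thing to be careful about is that the hypothesis $1 \le N < M$ guarantees the range $\{N,\dots,M-1\}$ is nonempty so that the reindexing and term extraction are legitimate, and that one correctly tracks which endpoint term comes from which sum (it is easy to misplace a $g(N-1)$ versus a $g(N)$). Ensuring the index shift in step~(2) is applied consistently is the place where a sign or off-by-one error would most plausibly creep in, so I would double-check the endpoints $j = N-1$ and $j = M-1$ explicitly.
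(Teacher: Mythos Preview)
Your proposal is correct and matches the paper's proof essentially line for line: the paper also expands $\diffOpBack g(k)$, splits into two sums, shifts the index in the second sum to $\sum_{k=N-1}^{M-1} f(k+1)g(k)$, and then reads off the boundary terms $f(M)g(M) - f(N)g(N-1)$ together with $-\sum_{k=N}^{M-1}\diffOpForw f(k)\,g(k)$.
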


\begin{proof}
	The proof of summation by parts can be found in \cite[Equation (3a)]{SBP}, for example. However, since the proof is simple, we will show it here for the convenience of the reader.
	
	A straightforward calculation gives
	\begin{align*}
		\sum_{k=N}^{M} f(k) \, \diffOpBack g(k)
		&= \sum_{k=N}^{M} f(k) g(k) - \sum_{k=N}^{M} f(k) g(k-1) \\
		&= \sum_{k=N}^{M} f(k) g(k) - \sum_{k=N-1}^{M-1} f(k+1) g(k) \\
		&= f(M) g(M) - f(N) g(N-1) - \sum_{k=N}^{M-1} \diffOpForw f(k) \, g(k),
	\end{align*}
	which shows the lemma.
\end{proof}

Let us now prove a refined version of Duren's lemma, which gives explicit constants for the asymptotic estimates.

\begin{lemma}[Duren, refined formulation]\label{lemma:DurenRefined}
	Consider a non-negative sequence $c_{n} \geq 0$, and let $0 < a < b$. If there exist constants $N_0 \in \bN$ and $C > 0$ such that for all $N \geq N_0$,
	\begin{equation}\label{ineq:DurenRefined1}
		\sum_{n \leq N} n^{b} c_{n} \leq C N^a,
	\end{equation}
	then there is a constant $K_{a,b} > 0$ such that for all $N \geq N_0$,
	\[\sum_{n > N} c_n \leq K_{a,b} C N^{a-b}.\]
	Conversely, if there exist constants $N'_0 \in \bN$ and $C' > 0$ such that for all $N \geq N'_0$,
	\begin{equation}\label{ineq:DurenRefined2}
		\sum_{n > N} c_n \leq C' N^{a-b},
	\end{equation}
	then there exists a constant $K'_{a,b} > 0$ such that for all $N \in \bN \setminus \{0\}$,
	\[\sum_{n \leq N} n^b c_n \leq \left( \normIn{c}{\ell^{1}(\bN \setminus \{0\})} + K'_{a,b} C' \right) N^{a}.\]
\end{lemma}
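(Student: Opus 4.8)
The plan is to prove both implications by Abel summation (\cref{lemma:SBP}), keeping careful track of constants, exactly mirroring the telescoping computations already used in the proof of \cref{lemma:DurenCont}, but now in the discrete setting and with the summation-by-parts bookkeeping made explicit.

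\textbf{Forward direction.} Assume \eqref{ineq:DurenRefined1}. Put $\varphi(N) := \sum_{n \leq N} n^b c_n$, so $c_n = n^{-b} \, \diffOpBack \varphi(n)$ for $n \geq 1$. For $N_0 \leq N < M$, apply summation by parts with $f(n) = n^{-b}$ and $g(n) = \varphi(n)$:
\[
\sum_{n=N+1}^{M} c_n = \sum_{n=N+1}^{M} n^{-b} \diffOpBack \varphi(n) = M^{-b}\varphi(M) - (N+1)^{-b}\varphi(N) - \sum_{n=N+1}^{M-1} \diffOpForw(n^{-b})\, \varphi(n).
\]
Since $a < b$, the first term $M^{-b}\varphi(M) \leq C M^{a-b} \to 0$ as $M \to \infty$, the second term is non-negative so may be dropped, and $-\diffOpForw(n^{-b}) = n^{-b} - (n+1)^{-b} \geq 0$ is bounded (by the mean value theorem or a direct estimate) by $b\, n^{-b-1}$. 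Hence, letting $M \to \infty$,
\[
\sum_{n > N} c_n \leq b \sum_{n > N} n^{-b-1} \varphi(n) \leq bC \sum_{n > N} n^{a-b-1} \leq bC \int_{N}^{\infty} x^{a-b-1}\,\d{x} = \frac{bC}{b-a} N^{a-b},
\]
so $K_{a,b} = \tfrac{b}{b-a}$ works (after replacing the sum $\sum_{n>N} n^{a-b-1}$ by the integral, which is valid up to a harmless constant absorbed into $K_{a,b}$; one can be slightly more careful here and compare the sum to $\int_{N}^{\infty}$ using monotonicity of $x \mapsto x^{a-b-1}$).

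\textbf{Converse direction.} Assume \eqref{ineq:DurenRefined2}. Put $\psi(N) := \sum_{n > N} c_n$, so $c_n = \psi(n-1) - \psi(n) = -\diffOpBack\psi(n)$, i.e.\ $\diffOpForw\psi(n-1) = -c_n$. Fix $N \geq 1$. Split the sum at $N'_0$: the part $\sum_{n \leq N'_0} n^b c_n$ is bounded by $(N'_0)^b \normIn{c}{\ell^1(\bN\setminus\{0\})}$, a constant, and can be absorbed. For the tail $N'_0 < n \leq N$ (assuming $N > N'_0$, else there is nothing to do), apply summation by parts with $f(n) = n^b$ and $g(n) = \psi(n)$, writing $n^b c_n = -n^b\,\diffOpBack\psi(n)$:
\[
\sum_{n=N'_0+1}^{N} n^b c_n = -\Bigl( N^b\psi(N) - (N'_0+1)^b \psi(N'_0) - \sum_{n=N'_0+1}^{N-1} \diffOpForw(n^b)\,\psi(n) \Bigr).
\]
Now $N^b\psi(N) \geq 0$ is dropped after the sign flip, $(N'_0+1)^b\psi(N'_0) \leq (N'_0+1)^b \normIn{c}{\ell^1}$ is a constant, and $\diffOpForw(n^b) = (n+1)^b - n^b \leq b(n+1)^{b-1} \leq 2^{b-1} b\, n^{b-1}$ for $n \geq 1$. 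Using $\psi(n) \leq C' n^{a-b}$ for $n \geq N'_0$,
\[
\sum_{n=N'_0+1}^{N-1} \diffOpForw(n^b)\,\psi(n) \leq 2^{b-1}bC' \sum_{n=N'_0+1}^{N-1} n^{b-1} n^{a-b} = 2^{b-1}bC'\sum_{n < N} n^{a-1} \leq 2^{b-1}bC'\,\frac{N^a}{a},
\]
again comparing the sum to $\int_0^N x^{a-1}\,\d x = N^a/a$ (valid since $a > 0$ makes $x^{a-1}$ integrable near $0$, up to an absorbed constant). Combining the pieces gives $\sum_{n \leq N} n^b c_n \leq \bigl( \normIn{c}{\ell^1(\bN\setminus\{0\})} + K'_{a,b}C' \bigr) N^a$ with $K'_{a,b}$ depending only on $a,b$ (and $N'_0$, which may be absorbed into the constant prefactor since $N \geq 1$ makes $N^a \geq 1$ when $a>0$ — one can simply replace the additive constants by their bound times $N^a$).

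\textbf{Main obstacle.} The computations themselves are routine Abel-summation telescoping; the genuine care required is in (i) correctly matching the $\diffOpForw$/$\diffOpBack$ conventions of \cref{lemma:SBP} with the choices of $f$ and $g$ so that the boundary and error terms come out with the right signs and the ``junk'' terms are non-negative and hence discardable, and (ii) handling the truncation at $N_0$ (resp.\ $N'_0$) below which the hypothesis is not assumed — the claim is clean in the forward direction (the conclusion holds for all $N \geq N_0$) but in the converse the finitely many initial terms must be packaged into the $\normIn{c}{\ell^1}$ term, which is why the final constant in the converse involves $\normIn{c}{\ell^1(\bN\setminus\{0\})}$ rather than being purely multiplicative. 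Getting the dependence of $K_{a,b}, K'_{a,b}$ to be genuinely only on $a$ and $b$ (not on $N_0, N'_0, C, C'$) is the precise statement one must be disciplined about, and is exactly what makes this ``refined'' over \cref{lemma:Duren}.
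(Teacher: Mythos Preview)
Your proof is correct and essentially identical to the paper's: both apply Abel summation (\cref{lemma:SBP}) to the partial sums $S_n = \varphi(n)$ in the forward direction and to the tails $R_n = \psi(n)$ in the converse, followed by the same integral comparisons, arriving at $K_{a,b} = \tfrac{b}{b-a}$ and $K'_{a,b} = \tfrac{b}{a}\max\{1,2^{b-1}\}$. One small slip: your intermediate bound $(n+1)^b - n^b \leq b(n+1)^{b-1}$ is false for $0 < b < 1$ (there the mean value theorem gives $\leq b\,n^{b-1}$ directly), which is why the paper's constant carries the factor $\max\{1,2^{b-1}\}$; also, the paper absorbs the $N'_0$-dependence by a WLOG extending \eqref{ineq:DurenRefined2} to all $N \geq 1$ rather than splitting the sum as you do, though the effect is the same.
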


\begin{proof}
	Suppose that \eqref{ineq:DurenRefined1} holds. Set $S_n := \sum_{k=1}^{n} k^{b} c_k$. Note that $\diffOpBack S_n = n^b c_n$ so that an application of \cref{lemma:SBP} gives for $N_0 \leq N < M - 1$ that
	\begin{align}\label{ineq:DurenRefined3}
		\sum_{n=N+1}^{M} c_n &= \sum_{n=N+1}^{M} n^{-b} \diffOpBack S_n \nonumber \\
		&= M^{-b} S_{M} - (N+1)^{-b} S_{N} - \sum_{n=N+1}^{M-1} \left( (n+1)^{-b} - n^{-b} \right) S_n \nonumber \\
		&\leq C M^{a-b} + b C \sum_{n=N+1}^{M-1} n^{a-b-1},
	\end{align}
	where we used that
	\begin{equation}\label{ineq:DurenRefined6}
		n^{-b} - (n+1)^{-b} = b \int_{n}^{n+1} x^{-b-1} \, \d{x} \leq b n^{-b-1}.
	\end{equation}
	Taking the limit $M \to \infty$ in \eqref{ineq:DurenRefined3}, we obtain that
	\[
	\sum_{n=N+1}^{\infty} c_n \leq b C \sum_{n = N+1}^{\infty} n^{a-b-1}
	\leq b C \int_{N}^{\infty} x^{a-b-1} \, \d{x}
	= \frac{b}{b-a} C N^{a-b}.
	\]
	
	Conversely, assume that \eqref{ineq:DurenRefined2} holds. Set $R_n := \sum_{k > n} c_k$ and note that $\diffOpBack R_n = -c_n$. Remark that since $R_N \leq C' N^{a-b}$ for all $N \geq N'_0$, there does exist a constant $C'' > 0$ such that $R_N \leq C'' N^{a-b}$ for all $N \in \bN \setminus \{0\}$. Hence, without loss of generality, we may assume that \eqref{ineq:DurenRefined2} holds for all $N \in \bN \setminus \{0\}$. Then, for $N \in \bN \setminus \{0\}$ we have
	\begin{equation}\label{ineq:DurenRefined4}
		\sum_{n=1}^{N} n^b c_n
		= \sum_{n=1}^{N} n^b \diffOpBack(-R_n) = -N^b R_N + R_0 + \sum_{n=1}^{N-1} \left( (n+1)^b - n^b \right) R_n.
	\end{equation}
	Now, note that
	\[(n+1)^b - n^b = b \int_{n}^{n+1} x^{b-1} \, \d{x} \leq \begin{dcases}
		b (n+1)^{b-1} \leq b 2^{b-1} n^{b-1} &\quad \text{if} \quad b - 1 \geq 0 \\
		b n^{b-1} & \quad \text{if} \quad b - 1 < 0
	\end{dcases},\]
	where we used that $(n+1)^{b-1} \leq \sup_{m \in \bN \setminus \{0\}} \left(\frac{m+1}{m}\right)^{b-1} n^{b-1} = 2^{b-1} n^{b-1}$ if $b - 1 \geq 0$. Hence, in more compact form, we found for $n \in \bN \setminus \{0\}$ that
	\begin{equation}\label{ineq:DurenRefined7}
		(n+1)^b - n^b \leq b \max\{1, 2^{b-1}\} n^{b-1}
	\end{equation}
	so that \eqref{ineq:DurenRefined4} can be estimated further as
	\begin{equation}\label{ineq:DurenRefined5}
		\sum_{n=1}^{N} n^b c_n \leq R_0 + b \max\{1, 2^{b-1}\} C' \sum_{n=1}^{N-1} n^{a-1}.
	\end{equation}
	We calculate that
	\begin{equation}\label{ineq:DurenRefined8}
		\sum_{n=1}^{N-1} n^{a-1} \leq \begin{dcases}
			\int_{1}^{N} x^{a-1} \, \d{x} = \frac{N^a}{a} - \frac{1}{a} \leq \frac{N^a}{a} &\quad \text{if} \quad a - 1 \geq 0 \\
			\int_{0}^{N-1} x^{a-1} \, \d{x} = \frac{(N-1)^a}{a} \leq \frac{N^a}{a} &\quad \text{if} \quad a - 1 < 0
		\end{dcases},
	\end{equation}
	so that \eqref{ineq:DurenRefined5} becomes
	\[\sum_{n=1}^{N} n^b c_n \leq R_0 + K'_{a,b} C' N^a \leq \left( R_0 + K'_{a,b} C' \right) N^a\]
	for $N \in \bN \setminus \{0\}$, where
	\[K'_{a,b} := \frac{b}{a} \max\{1, 2^{b-1}\},\]
	which concludes the proof.
\end{proof}

The former refined formulation of Duren's lemma can be adapted to fit our setting of lattices.

\begin{lemma}\label{lemma:DurenLatAddRefined}
	Let $d \in \bN \setminus \{0\}$, and let $L$ be a lattice in $\bR^d$. Consider a non-negative sequence $c_{\lambda} \geq 0$ with $\lambda \in L$, and let $0 < a < b$. If there exist constants $N_0 \in \bN$ and $C > 0$ such that for all $N \geq N_{0}$,
	\begin{equation}\label{ineq:DurenLatAddRefined1}
		\sum_{\abs{\lambda} \leq N} \abs{\lambda}^b c_{\lambda} \leq C N^a,
	\end{equation}
	then there is a constant $K_{a,b} > 0$ such that for all $N \geq N_0$,
	\[\sum_{\abs{\lambda} > N} c_{\lambda} \leq K_{a,b} C N^{a-b}.\]
	Conversely, if there exist constant $N'_0 \in \bN$ and $C' > 0$ such that for all $N \geq N'_0$,
	\begin{equation}\label{ineq:DurenLatAddRefined2}
		\sum_{\abs{\lambda} > N} c_{\lambda} \leq C' N^{a-b},
	\end{equation}
	then there exists a constant $K'_{a,b} > 0$ such that for all $N \in \bN \setminus \{0\}$,
	\[\sum_{\abs{\lambda} \leq N} \abs{\lambda}^b c_{\lambda} \leq \left( \normIn{c}{\ell^{1}(\dualLat{L})} + K'_{a,b} C' \right) N^a.\]
\end{lemma}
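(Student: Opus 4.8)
The plan is to reduce everything to the one-dimensional refined statement \cref{lemma:DurenRefined}, exactly as \cref{lemma:DurenLatAdd} was reduced to \cref{lemma:Duren}, the only extra work being to propagate the explicit constants that \cref{lemma:DurenRefined} supplies. In both directions I would bin the lattice points by the integer part of their Euclidean norm, turning a sum over $L$ into a scalar sum indexed by $\bN$, and then arrange a comparison inequality so that the binned estimate points in the direction required to invoke \cref{lemma:DurenRefined}.

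For the first implication, assume \eqref{ineq:DurenLatAddRefined1} and set $d_n := \sum_{n < \abs{\lambda} \le n+1} c_\lambda$. Since $\abs{\lambda} > n$ on the $n$-th bin, one gets $\sum_{n \le N} n^b d_n \le \sum_{\abs{\lambda} \le N+1} \abs{\lambda}^b c_\lambda \le C(N+1)^a \le 2^a C N^a$ for $N \ge \max\{N_0,1\}$, so $(d_n)$ satisfies the hypothesis \eqref{ineq:DurenRefined1} of \cref{lemma:DurenRefined} with constant $2^a C$; that lemma then returns $\sum_{n > N} d_n \le K_{a,b}\, 2^a C\, N^{a-b}$. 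Writing $\sum_{\abs{\lambda} > N} c_\lambda = d_N + \sum_{n > N} d_n$ and estimating the leftover bin by $d_N \le N^{-b} \sum_{\abs{\lambda} \le N+1} \abs{\lambda}^b c_\lambda \le 2^a C N^{a-b}$ gives the claim after renaming the constant. For the converse, assume \eqref{ineq:DurenLatAddRefined2} and use the other binning $d_n := \sum_{n-1 < \abs{\lambda} \le n} c_\lambda$, for which $\sum_{n > N} d_n = \sum_{\abs{\lambda} > N} c_\lambda \le C' N^{a-b}$ holds exactly, i.e.\ \eqref{ineq:DurenRefined2} holds for $(d_n)$; the converse half of \cref{lemma:DurenRefined} then yields $\sum_{n \le N} n^b d_n \le \big(\normIn{d}{\ell^{1}(\bN \setminus \{0\})} + K'_{a,b} C'\big) N^a$ for all $N \ge 1$, and since now $\abs{\lambda} \le n$ on the $n$-th bin we have $\sum_{\abs{\lambda} \le N} \abs{\lambda}^b c_\lambda \le \sum_{n \le N} n^b d_n$, while $\normIn{d}{\ell^{1}(\bN \setminus \{0\})} = \sum_{\lambda \ne 0} c_\lambda \le \normIn{c}{\ell^{1}(L)}$.

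I expect the main obstacle to be bookkeeping rather than anything conceptual: one must pick the two binnings so that the comparison between $\sum_{n \le N} n^b d_n$ and $\sum_{\abs{\lambda} \le N} \abs{\lambda}^b c_\lambda$ has the correct orientation in each direction, handle the off-by-one arising because the bin $n = N$ is absent from $\sum_{n > N}$ but needed for $\sum_{\abs{\lambda} > N}$ (whence the separate estimate of $d_N$), and verify that the scalar estimates hold precisely on the ranges of $N$ asserted in the statement, the degenerate value $N = 0$ being harmless since $N^{a-b} = +\infty$ there.
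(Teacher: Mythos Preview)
Your proposal is correct and follows essentially the same route as the paper: bin lattice points by the integer part of $\abs{\lambda}$ (with the two opposite binning conventions in the two directions) and feed the resulting scalar sequence into \cref{lemma:DurenRefined}, tracking the explicit constants. You are in fact slightly more careful than the paper about the off-by-one term $d_N$ in the first implication, where the paper applies \cref{lemma:DurenRefined} and writes $\sum_{n \ge N} d_n$ directly without isolating the $n=N$ bin; your separate estimate $d_N \le N^{-b}\sum_{\abs{\lambda}\le N+1}\abs{\lambda}^b c_\lambda \le 2^a C N^{a-b}$ is the clean way to close that gap.
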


\begin{proof}
	Firstly, suppose that \eqref{ineq:DurenLatAddRefined1} holds. Let $N \in \bN \setminus \{0\}$. In this case we have that
	\[\sum_{n=1}^N n^b \left( \sum_{n < \abs{\lambda} \leq n+1} c_{\lambda} \right)
	< \sum_{n=0}^N \sum_{n < \abs{\lambda} \leq n+1} \abs{\lambda}^b c_{\lambda}
	\leq \sum_{\abs{\lambda} \leq N+1} \abs{\lambda}^b c_{\lambda}
	\leq C (N+1)^a \leq 2^a C N^a,\]
	where we used that for $N \in \bN \setminus \{0\}$ we have
	\[(N+1)^a \leq \sup_{M \in \bN \setminus \{0\}} \left(\frac{M+1}{M}\right)^a N^a = 2^a N^a.\]
	It follows from \cref{lemma:DurenRefined} that for some constant $\tilde{K}_{a,b} > 0$ we have that
	\[\sum_{\abs{\lambda} > N} c_{\lambda}
	= \sum_{n \geq N} \left( \sum_{n < \abs{\lambda} \leq n+1} c_{\lambda} \right)
	\leq 2^a \tilde{K}_{a,b} C N^{a-b} = K_{a,b} C N^{a-b},\]
	where $K_{a,b} = 2^a \tilde{K}_{a,b}$.
	
	Conversely, if \eqref{ineq:DurenLatAddRefined2} holds, we have that
	\[\sum_{n > N} \left( \sum_{n-1 < \abs{\lambda} \leq n} c_{\lambda} \right)
	= \sum_{\abs{\lambda} > N} c_{\lambda}
	\leq C' N^{a-b}\]
	so that \cref{lemma:DurenRefined} asserts the existence of a constant $K'_{a,b} > 0$ such that for all $N \in \bN \setminus \{0\}$,
	\[\sum_{\abs{\lambda} \leq N} \abs{\lambda}^{b} c_{\lambda}
	\leq \sum_{n=1}^{N} n^{b} \left( \sum_{n-1 < \abs{\lambda} \leq n} c_{\lambda} \right)
	\leq \left( \normIn{c}{\ell^{1}(\dualLat{L})} + K'_{a,b} C' \right) N^a,\]
	proving the lemma.
\end{proof}

The former lattice version of the refined formulation of Duren's lemma allows us to prove that the norms $\normIn{\cdot}{\HoldLipSpaceFD{\alpha}{2}}$ and $\normIn{\cdot}{\HoldLipSpaceFD{\alpha}{2}}'$ on $\HoldLipSpaceFD{\alpha}{2}$ for $0 < \alpha < 1$ are equivalent, since the refinement is essentially an explicit form for the constants involved in the $O$-estimates, so that we can now establish uniform estimates.

\begin{lemma}
	The norms
	\[\normIn{f}{\HoldLipSpaceFD{\alpha}{2}}
	= \normIn{f}{L^2(\Omega)} + \sup_{N \in \bN} N^{\alpha} \left(\sum_{\abs{\kappa} > N} |\FT{f}(\kappa)|^{2}\right)^{\frac{1}{2}}\]
	and
	\[\normIn{f}{\HoldLipSpaceFD{\alpha}{2}}'
	:= \normIn{f}{L^2(\Omega)} + \sup_{\abs{h} \neq 0} \frac{\normIn{f(\cdot+h) - f(\cdot)}{L^2(\Omega)}}{\abs{h}^{\alpha}}\]
	on the additive H\"older-Lipschitz space $\HoldLipSpaceFD{\alpha}{2}$ are equivalent.
\end{lemma}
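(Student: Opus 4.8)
\emph{Plan.} The idea is to pass back and forth between the two ``sides'' of the Duren correspondence, using the refined lattice version \cref{lemma:DurenLatAddRefined} rather than its qualitative counterpart so that all implied constants are uniform in $f$. The common pivot is Plancherel's identity
\[\normIn{f(\cdot+h)-f(\cdot)}{L^{2}(\Omega)}^{2} = 4\sum_{\kappa \in \dualLat{L}} \sin^{2}(\pi \kappa \cdot h)\,|\FT{f}(\kappa)|^{2},\]
together with the two-sided bound $2|\kappa\cdot h| \le |\sin(\pi\kappa\cdot h)| \le \pi|\kappa\cdot h|$, valid whenever $|\kappa\cdot h|\le\frac12$ by Jordan's inequality \eqref{ineq:JordanIneq}. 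Both directions of the second Titchmarsh theorem \cref{th:secondTitchmarsh} and of \cref{th:firstTitchmarsh} already carry out essentially these computations; the only genuinely new point here is that an $O$-statement does not by itself control a supremum, and this gap is exactly what \cref{lemma:DurenLatAddRefined} (resting on \cref{lemma:DurenRefined}) closes.

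For the inequality $\normIn{f}{\HoldLipSpaceFD{\alpha}{2}} \lesssim \normIn{f}{\HoldLipSpaceFD{\alpha}{2}}'$, set $M := \sup_{|h|\neq 0}|h|^{-\alpha}\normIn{f(\cdot+h)-f(\cdot)}{L^{2}(\Omega)}$. For each $N\ge 1$ I would take $h_{i}:=\frac{1}{2N}e_{i}$; then $|\kappa\cdot h_{i}|\le|\kappa|/(2N)\le\frac12$ on $\{|\kappa|\le N\}$, so Jordan's inequality gives $\sum_{i=1}^{d}|\sin(\pi\kappa\cdot h_{i})|^{2}\ge N^{-2}|\kappa|^{2}$ there, and combining this with the Plancherel identity and $\normIn{f(\cdot+h_{i})-f(\cdot)}{L^{2}(\Omega)}^{2}\le M^{2}(2N)^{-2\alpha}$ yields a \emph{uniform} estimate
\[\sum_{|\kappa|\le N}|\kappa|^{2}|\FT{f}(\kappa)|^{2}\ \le\ c_{d,\alpha}\,M^{2}\,N^{2-2\alpha}\qquad(N\ge 1).\]
Applying the forward half of \cref{lemma:DurenLatAddRefined} with $b=2$ and $a=2-2\alpha$ (admissible since $0<\alpha<1$ forces $0<a<b$) gives $\sum_{|\kappa|>N}|\FT{f}(\kappa)|^{2}\lesssim M^{2}N^{-2\alpha}$ for all $N\ge 1$, while for $N=0$ the left-hand side is at most $\normIn{f}{L^{2}(\Omega)}^{2}$ by \eqref{eq:Plancherel}. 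Taking square roots and the supremum over $N$ finishes this direction.

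For the reverse inequality, write $L := \sup_{N\in\bN}N^{\alpha}\big(\sum_{|\kappa|>N}|\FT{f}(\kappa)|^{2}\big)^{1/2}$, so that $\sum_{|\kappa|>N}|\FT{f}(\kappa)|^{2}\le L^{2}N^{-2\alpha}$ for all $N\ge 1$. The converse half of \cref{lemma:DurenLatAddRefined} (again $b=2$, $a=2-2\alpha$), using $\normIn{(|\FT{f}(\kappa)|^{2})_{\kappa}}{\ell^{1}(\dualLat{L})}=\normIn{f}{L^{2}(\Omega)}^{2}$ from \eqref{eq:Plancherel}, then gives
\[\sum_{|\kappa|\le N}|\kappa|^{2}|\FT{f}(\kappa)|^{2}\ \le\ \bigl(\normIn{f}{L^{2}(\Omega)}^{2}+K'_{a,b}\,L^{2}\bigr)N^{2-2\alpha}\qquad(N\ge 1).\]
For $0<|h|\le 1$ I would put $N_{h}:=\floor{1/|h|}\ge 1$ and split the Plancherel series at $|\kappa|=N_{h}$, bounding $\sin^{2}$ by the square of its argument on the low-frequency part and trivially on the high-frequency part; since $\frac{1}{2|h|}\le N_{h}\le\frac{1}{|h|}$ this produces $\normIn{f(\cdot+h)-f(\cdot)}{L^{2}(\Omega)}^{2}\lesssim(\normIn{f}{L^{2}(\Omega)}^{2}+L^{2})|h|^{2\alpha}$, exactly as in the converse part of \cref{th:secondTitchmarsh} but now with a uniform constant. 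For $|h|>1$ translation invariance of $\normIn{\cdot}{L^{2}(\Omega)}$ gives $\normIn{f(\cdot+h)-f(\cdot)}{L^{2}(\Omega)}\le 2\normIn{f}{L^{2}(\Omega)}$, and $|h|^{-\alpha}<1$; combining the two regimes bounds $\normIn{f}{\HoldLipSpaceFD{\alpha}{2}}'$ by a constant multiple of $\normIn{f}{\HoldLipSpaceFD{\alpha}{2}}$.

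The only real obstacle is thus bookkeeping rather than analysis: every analytic ingredient is already in the proofs of \cref{th:firstTitchmarsh,th:secondTitchmarsh}, but the conclusions there are asymptotic, and a supremum over all $h\neq 0$ (resp.\ all $N\in\bN$) cannot be controlled by an $O$-estimate valid only near the limit. Invoking the refined Duren lemma \cref{lemma:DurenLatAddRefined} in place of \cref{lemma:DurenLatAdd} replaces each $O(\cdot)$ by an explicit-constant inequality, and the separate (elementary) handling of the ranges $|h|>1$ and $N=0$ takes care of the part of the quantification that lies away from the asymptotic regime.
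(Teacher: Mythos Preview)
Your proposal is correct and follows essentially the same route as the paper: both directions pivot on Plancherel and Jordan, pass through the weighted partial sum $\sum_{|\kappa|\le N}|\kappa|^{2}|\FT{f}(\kappa)|^{2}$, and invoke the refined lattice Duren lemma \cref{lemma:DurenLatAddRefined} with $a=2-2\alpha$, $b=2$ to convert between that sum and the tail $\sum_{|\kappa|>N}|\FT{f}(\kappa)|^{2}$ with constants uniform in $f$. Your treatment is in fact slightly more careful than the paper's in explicitly disposing of the ranges $|h|>1$ and $N=0$, which the paper leaves implicit.
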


\begin{proof}
	Let $f \in \HoldLipSpaceFD{\alpha}{2}$, and set
	\[A_{f,\alpha} := \sup_{\abs{h} \neq 0} \frac{\normIn{f(\cdot + h) - f(\cdot)}{L^2(\Omega)}}{\abs{h}^{\alpha}}.\]
	Using Jordan's inequality \eqref{ineq:JordanIneq} and Plancherel's identity \eqref{eq:Plancherel} it holds for $h \in \bR^d$ that
	\begin{align*}
		\sum_{\abs{\kappa} \leq \frac{1}{2\abs{h}}} 16 \abs{\kappa \cdot h}^2 |\FT{f}(\kappa)|^2 \leq \sum_{\abs{\kappa} \leq \frac{1}{2\abs{h}}} 4 \abs{\sin(\pi \kappa \cdot h)}^2 |\FT{f}(\kappa)|^2 &= \normIn{\FT{f(\cdot + h) - f(\cdot)}}{L^2(\Omega)}^2 \\
		&= \normIn{f(\cdot + h) - f(\cdot)}{L^2(\Omega)}^2 \\
		&\leq A_{f,\alpha}^2 \abs{h}^{2\alpha}.
	\end{align*}
	Let $N \in \bN \setminus \{0\}$, and set $h_i := \frac{1}{2N} e_i$ for every $1 \leq i \leq d$. Then we get for every $1 \leq i \leq d$ that
	\[\sum_{\abs{\kappa} \leq N} 4 \frac{\abs{\kappa_i}^2}{N^2} |\FT{f}(\kappa)|^2 \leq A_{f,\alpha}^2 \abs{h_i}^{2\alpha} = A_{f,\alpha}^2 2^{-2\alpha} N^{-2\alpha}\]
	so that summing over $i$ gives that
	\[4 \sum_{\abs{\kappa} \leq N} \frac{\abs{\kappa}^2}{N^2} |\FT{f}(\kappa)|^2 \leq 2^{-2\alpha} d A_{f,\alpha}^2 N^{-2\alpha},\]
	which can be rewritten as
	\[\sum_{\abs{\kappa} \leq N} \abs{\kappa}^2 |\FT{f}(\kappa)|^2 \leq 2^{-2\alpha-2} d A_{f,\alpha}^2 N^{2(1-\alpha)}.\]
	Then \cref{lemma:DurenLatAddRefined} implies that for some $K_{\alpha} > 0$ we have
	\[\sum_{\abs{\kappa} > N} |\FT{f}(\kappa)|^2 \leq K_{\alpha} 2^{-4\alpha} d A_{f,\alpha}^2 N^{-2\alpha}\]
	so that
	\[\sup_{N \in \bN} N^{\alpha} \left( \sum_{\abs{\kappa} > N} |\FT{f}(\kappa)|^2 \right)^{\frac{1}{2}} \leq 2^{-2\alpha} \sqrt{K_{\alpha} d} A_{f,\alpha},\]
	proving that
	\[\normIn{f}{\HoldLipSpaceFD{\alpha}{2}} \leq \max\left\{1, 2^{-2\alpha} \sqrt{K_{\alpha} d}\right\} \normIn{f}{\HoldLipSpaceFD{\alpha}{2}}'.\]
	
	Conversely, set
	\[B_{f,\alpha} := \sup_{N \in \bN} N^{\alpha} \left( \sum_{\abs{\kappa} > N} |\FT{f}(\kappa)|^2 \right)^{\frac{1}{2}}.\]
	Let $h \in \bR^d \setminus \{0\}$ be arbitrary, and set $N_h := \floor{\frac{1}{\abs{h}}}$. Note that $\frac{1}{2\abs{h}} \leq N_h \leq \frac{1}{\abs{h}}$ if $N_h \geq 1$. It follows from Plancherel's identity \eqref{eq:Plancherel} that
	\begin{align}\label{ineq:equivNorms1}
		\normIn{f(\cdot + h) - f(\cdot)}{L^2(\Omega)}^{2}
		&= \sum_{\kappa \in \dualLat{L}} 4 \sin^2(\pi \kappa \cdot h) |\FT{f}(\kappa)|^2 \nonumber \\
		&\leq 4 \pi^2 \abs{h}^2 \sum_{\abs{\kappa} \leq N_h} \abs{\kappa}^2 |\FT{f}(\kappa)|^2 + 4 \sum_{\abs{\kappa} > N_h} |\FT{f}(\kappa)|^2.
	\end{align}
	Note that for $N \in \bN \setminus \{0\}$ we have that
	\begin{equation}\label{ineq:equivNorms2}
		\sum_{\abs{\kappa} > N} |\FT{f}(\kappa)|^2 \leq B_{f,\alpha}^2 N^{-2\alpha}
	\end{equation}
	so that \cref{lemma:DurenLatAddRefined} gives that there exists a constant $K'_{\alpha} > 0$ such that
	\begin{equation}\label{ineq:equivNorms3}
		\sum_{\abs{\kappa} \leq N} \abs{\kappa}^2 |\FT{f}(\kappa)|^2 \leq \left( \lVert \FT{f} \rVert_{\ell^{2}(\dualLat{L})}^2 + K'_{\alpha} B_{f,\alpha}^2 \right) N^{2(1-\alpha)}
		= \left( \lVert f \rVert_{L^{2}(\Omega)}^2 + K'_{\alpha} B_{f,\alpha}^2 \right) N^{2(1-\alpha)}.
	\end{equation}
	Hence, \eqref{ineq:equivNorms1} can be further estimated with the help of \eqref{ineq:equivNorms2} and \eqref{ineq:equivNorms3} as
	\begin{align*}
		\normIn{f(\cdot + h) - f(\cdot)}{L^2(\Omega)}^2
		&\leq 4 \pi^2 \abs{h}^2 \left( \lVert f \rVert_{L^{2}(\Omega)}^2 + K'_{\alpha} B_{f,\alpha}^2 \right) N_h^{2(1-\alpha)} + 4 B_{f,\alpha}^2 N_h^{-2\alpha} \\
		&\leq \left( 4 \pi^2 \normIn{f}{L^2(\Omega)}^2 + \left( 4 \pi^2 K'_{\alpha} + 2^{2 + 2 \alpha} \right) B_{f,\alpha}^2 \right) \abs{h}^{2\alpha}
	\end{align*}
	so that
	\begin{align*}
		\sup_{\abs{h} \neq 0} \frac{\normIn{f(\cdot + h) - f(\cdot)}{L^2(\Omega)}}{\abs{h}^{\alpha}}
		&\leq \sqrt{4 \pi^2 \normIn{f}{L^2(\Omega)}^2 + \left( 4 \pi^2 K'_{\alpha} + 2^{2 + 2 \alpha} \right) B_{f,\alpha}^2} \\
		&\leq 2 \pi \normIn{f}{L^2(\Omega)} + \sqrt{4 \pi^2 K'_{\alpha} + 2^{2 + 2\alpha}} B_{f,\alpha}.
	\end{align*}
	Thus, we find that
	\[\normIn{f}{\HoldLipSpaceFD{\alpha}{2}}' \leq \max\left\{1 + 2\pi, \sqrt{4 \pi^2 K'_{\alpha} + 2^{2+2\alpha}}\right\} \normIn{f}{\HoldLipSpaceFD{\alpha}{2}},\]
	concluding the proof.
\end{proof}

Next, we concisely deal with boundedness of Fourier multipliers on multiplicative H\"older-Lipschitz spaces. Let us start by introducing a norm on $\HoldLipSpaceMultFD{\alpha_1, \dots, \alpha_d}{2}$ corresponding to the asymptotic estimate in \cref{th:secondTitchMult}.

\begin{definition}
	Let \(0 < \alpha_{1}, \dots, \alpha_{d} < 1\). We define the norm \(\normIn{\cdot}{\HoldLipSpaceMultFD{\alpha_1, \dots, \alpha_d}{2}}\) on the multiplicative H\"older-Lipschitz space \(\HoldLipSpaceMultFD{\alpha_1, \dots, \alpha_d}{2}\) by
	\[\normIn{f}{\HoldLipSpaceMultFD{\alpha_1, \dots, \alpha_d}{2}}
	:= \normIn{f}{L^2(\Omega)} + \sup_{N_1, \dots, N_d \in \bN} N_1^{\alpha_1} \dots N_{d}^{\alpha_d} \left(\sum_{\abs{\kappa_{1}} > N_1} \dots \sum_{\abs{\kappa_{d}} > N_d} |\FT{f}(\kappa)|^{2}\right)^{\frac{1}{2}}.\]
\end{definition}

We can derive a boundedness result for Fourier multipliers on multiplicative H\"older-Lipschitz spaces in a similar manner as for the additive case.

\begin{theorem}
	Let \(\gamma_{1}, \dots, \gamma_{d} \in \bR\) with \(0 \leq \gamma_{1}, \dots, \gamma_{d} < 1\). Suppose that for some $C > 0$ the function \(\sigma : \dualLat{L} \to \bC\) satisfies the growth estimate
	\[\abs{\sigma(\kappa)} \leq C \langle \kappa_{1} \rangle^{-\gamma_{1}} \dots \langle \kappa_{d} \rangle^{-\gamma_{d}} \quad \text{with} \quad \langle \kappa_{j} \rangle := (1+\abs{\kappa_{j}}^{2})^{\frac{1}{2}}.\]
	Let \(T\) be the Fourier multiplier with symbol \(\sigma\), i.e.\ \(\FT{Tf}(\kappa) = \sigma(\kappa) \FT{f}(\kappa)\) for all \(\kappa \in \dualLat{L}\). Then
	\[T : \HoldLipSpaceMultFD{\alpha_{1}, \dots, \alpha_{d}}{2} \to \HoldLipSpaceMultFD{\alpha_{1} + \gamma_{1}, \dots, \alpha_{d} + \gamma_{d}}{2}\]
	is bounded for every \(\alpha_{1}, \dots, \alpha_{d} \in \bR\) with \(0 < \alpha_{j} < 1 - \gamma_{j}\) for all \(1 \leq j \leq d\).
\end{theorem}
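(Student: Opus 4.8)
The plan is to transcribe the proof of \cref{th:FM} into the multiplicative setting, using the multiplicative second Titchmarsh theorem (\cref{th:secondTitchMult}) in place of its additive counterpart (\cref{th:secondTitchmarsh}).

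First I would record that $\sigma$ is bounded: since $\langle \kappa_{j} \rangle^{-\gamma_{j}} \leq 1$ for each $j$ (here $\gamma_{j} \geq 0$ is used), we have $\normIn{\sigma}{\ell^{\infty}(\dualLat{L})} \leq C$, so by Plancherel's theorem \eqref{eq:Plancherel} the operator $T$ is bounded on $L^{2}(\Omega)$ with $\normIn{Tf}{L^{2}(\Omega)} \leq \normIn{\sigma}{\ell^{\infty}(\dualLat{L})} \normIn{f}{L^{2}(\Omega)}$; in particular $Tf \in L^{2}(\Omega)$ whenever $f \in L^{2}(\Omega)$.

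Next, for $f \in \HoldLipSpaceMultFD{\alpha_{1}, \dots, \alpha_{d}}{2}$ and sufficiently large $N_{1}, \dots, N_{d} \in \bN$, I would combine $\FT{Tf}(\kappa) = \sigma(\kappa)\FT{f}(\kappa)$, the growth estimate $\abs{\sigma(\kappa)}^{2} \leq C^{2} \langle \kappa_{1}\rangle^{-2\gamma_{1}} \dots \langle \kappa_{d}\rangle^{-2\gamma_{d}}$, and the coordinate-wise bound $\langle \kappa_{j}\rangle^{-2\gamma_{j}} \leq N_{j}^{-2\gamma_{j}}$, which holds on the product region $\{\abs{\kappa_{j}} > N_{j} \text{ for all } j\}$ because $\langle\kappa_{j}\rangle^{2} > 1 + N_{j}^{2} > N_{j}^{2}$ there and $\gamma_{j} \geq 0$, to obtain
\[
\sum_{\abs{\kappa_{1}} > N_{1}} \dots \sum_{\abs{\kappa_{d}} > N_{d}} \abs{\FT{Tf}(\kappa)}^{2} \leq C^{2} N_{1}^{-2\gamma_{1}} \dots N_{d}^{-2\gamma_{d}} \sum_{\abs{\kappa_{1}} > N_{1}} \dots \sum_{\abs{\kappa_{d}} > N_{d}} \abs{\FT{f}(\kappa)}^{2}.
\]
By the ``in particular'' form of \cref{th:secondTitchMult} applied to $f$, the last sum is $O\big(N_{1}^{-2\alpha_{1}} \dots N_{d}^{-2\alpha_{d}}\big)$, so the left-hand side is $O\big(N_{1}^{-2(\alpha_{1}+\gamma_{1})} \dots N_{d}^{-2(\alpha_{d}+\gamma_{d})}\big)$. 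Since $0 < \alpha_{j} < 1 - \gamma_{j}$ forces both $0 < \alpha_{j} < 1$ and $0 < \alpha_{j} + \gamma_{j} < 1$, the converse direction of \cref{th:secondTitchMult} applies in the image space and yields $Tf \in \HoldLipSpaceMultFD{\alpha_{1}+\gamma_{1}, \dots, \alpha_{d}+\gamma_{d}}{2}$. For the quantitative norm estimate I would keep the explicit constant $C$ in the displayed inequality rather than absorb it into the $O$-notation, take the supremum over $N_{1}, \dots, N_{d} \in \bN$ of $N_{1}^{\alpha_{1}+\gamma_{1}} \dots N_{d}^{\alpha_{d}+\gamma_{d}}$ times the square root, and combine with the $L^{2}$-bound to conclude
\[
\normIn{Tf}{\HoldLipSpaceMultFD{\alpha_{1}+\gamma_{1}, \dots, \alpha_{d}+\gamma_{d}}{2}} \leq \max\{\normIn{\sigma}{\ell^{\infty}(\dualLat{L})}, C\} \, \normIn{f}{\HoldLipSpaceMultFD{\alpha_{1}, \dots, \alpha_{d}}{2}}.
\]

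I do not expect a genuine obstacle, as the argument is a direct analogue of the additive case; the only points meriting care are checking that $\langle \kappa_{j}\rangle^{-2\gamma_{j}} \leq N_{j}^{-2\gamma_{j}}$ really does hold coordinate by coordinate on the product region $\{\abs{\kappa_{j}} > N_{j} \text{ for all } j\}$ (as opposed to only on a Euclidean ball), and confirming that the parameter hypotheses of \cref{th:secondTitchMult} are satisfied in the image space $\HoldLipSpaceMultFD{\alpha_{1}+\gamma_{1}, \dots, \alpha_{d}+\gamma_{d}}{2}$ --- the binding constraint being $\alpha_{j} + \gamma_{j} < 1$, which is exactly what the assumption $0 < \alpha_{j} < 1 - \gamma_{j}$ guarantees.
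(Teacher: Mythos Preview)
Your proposal is correct and follows essentially the same route as the paper's proof: both use the growth bound on $\sigma$ together with the coordinate-wise estimate $\langle \kappa_{j}\rangle^{-2\gamma_{j}} \leq N_{j}^{-2\gamma_{j}}$ on the product region, invoke \cref{th:secondTitchMult} in both directions, and combine with the $L^{2}$-boundedness of $T$ to arrive at the same operator-norm constant $\max\{\normIn{\sigma}{\ell^{\infty}(\dualLat{L})}, C\}$. The only difference is cosmetic ordering (you establish $L^{2}$-boundedness first, the paper does it after the tail estimate).
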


\begin{proof}
	     Let \(f \in \HoldLipSpaceMultFD{\alpha_1, \dots, \alpha_d}{2}\). By \cref{th:secondTitchMult} we have
	     \begin{align}
	     	\sum_{\abs{\kappa_1} > N_1} \dots \sum_{\abs{\kappa_d} > N_d} \abs{\FT{Tf}(\kappa)}^{2}
	     	&= \sum_{\abs{\kappa_1} > N_1} \dots \sum_{\abs{\kappa_d} > N_d} \abs{\sigma(\kappa)}^{2} |\FT{f}(\kappa)|^{2} \nonumber \\
	     	&\leq C^2 \sum_{\abs{\kappa_1} > N_1} \dots \sum_{\abs{\kappa_d} > N_d} \langle \kappa_1 \rangle^{-2\gamma_1} \dots \langle \kappa_d \rangle^{-2\gamma_d} |\FT{f}(\kappa)|^{2} \nonumber \\
	     	&\leq C^2 N_{1}^{-2\gamma_{1}} \dots N_{d}^{-2\gamma_{d}} \sum_{\abs{\kappa_1} > N_1} \dots \sum_{\abs{\kappa_d} > N_d} |\FT{f}(\kappa)|^{2} \label{eq:HoldLipMultFM1} \\
	     	&= O\left(N_{1}^{-2(\alpha_1 + \gamma_1)} \dots N_{d}^{-2(\alpha_d + \gamma_d)}\right). \nonumber
	     \end{align}
	     Consequently, \(T\left(\HoldLipSpaceMultFD{\alpha_1, \dots, \alpha_d}{2}\right) \subseteq \HoldLipSpaceMultFD{\alpha_1+\gamma_1, \dots, \alpha_d+\gamma_d}{2}\) for all \(\alpha_1, \dots, \alpha_d > 0\) with \(\alpha_j + \gamma_j < 1\) for every $1 \leq j \leq d$ because of \cref{th:secondTitchMult}. Next, \(T : L^{2}(\Omega) \to L^{2}(\Omega)\) is bounded because the symbol \(\sigma\) is bounded, as shown in \eqref{eq:HoldLipFM3}.
	     Thus, we find using \eqref{eq:HoldLipMultFM1} that
	     \begin{align*}
	     	&\normIn{Tf}{\HoldLipSpaceMultFD{\alpha_1 + \gamma_1, \dots, \alpha_d + \gamma_d}{2}} \\
	     	&\qquad= \normIn{Tf}{L^2(\Omega)} + \sup_{N_1, \dots, N_d \in \bN} N_{1}^{\alpha_1 + \gamma_1} \dots N_{d}^{\alpha_d + \gamma_d} \left( \sum_{\abs{\kappa_{1}} > N_1} \dots \sum_{\abs{\kappa_{d}} > N_d} \abs{\FT{Tf}(\kappa)}^2 \right)^{\frac{1}{2}} \\
	     	&\qquad\leq \normIn{\sigma}{\ell^{\infty}(\dualLat{L})} \normIn{f}{L^2(\Omega)} + C \sup_{N_1, \dots, N_d \in \bN} N_{1}^{\alpha_1} \dots N_{d}^{\alpha_d} \left( \sum_{\abs{\kappa_{1}} > N_1} \dots \sum_{\abs{\kappa_{d}} > N_d} |\FT{f}(\kappa)|^2 \right)^{\frac{1}{2}} \\
	     	&\qquad\leq \max\left\{ \normIn{\sigma}{\ell^{\infty}(\dualLat{L})}, C \right\} \normIn{f}{\HoldLipSpaceMultFD{\alpha_1, \dots, \alpha_d}{2}},
	     \end{align*}
	     which shows that \(T : \HoldLipSpaceMultFD{\alpha_1, \dots, \alpha_d}{2} \to \HoldLipSpace{\Omega}{\alpha_1+\gamma_1, \dots, \alpha_d+\gamma_d}{2}\) is bounded.
\end{proof}

It is a natural question to ask whether the refined formulation for Duren's lemma \ref{lemma:DurenRefined} can be extended to several variables so that it fits our multiplicative framework. It is straightforward to show that one implication can directly be generalized to higher dimensions via mathematical induction, while the other one needs very tedious calculations because of the inclusion of $\normIn{c}{\ell^1(\bN \setminus \{0\})}$ in the constant. This problem can be a topic for further investigation.

\section*{Acknowledgments}

I would like to express my thanks to Prof.\ Michael Ruzhansky for suggesting this research topic and his guidance, and to Dr. Vishvesh Kumar for helpful discussions at the start of this project.

\printbibliography

\end{document}